\newcommand{\h}{{\mathcal{H}}}
\newcommand{\R}{{\mathbb R}}
\def\XXint#1#2#3{{\setbox0=\hbox{$#1{#2#3}{\int}$ }
\vcenter{\hbox{$#2#3$ }}\kern-.6\wd0}}
\newcommand{\N}{\mathbb{N}}
\newcommand{\e}{\varepsilon}
\DeclareMathOperator{\diam}{diam}
\newcommand{\Ha}{\mathcal{H}}
\DeclareMathOperator{\Mod}{mod}
\newcommand{\modd}{\textnormal{mod}}
\newcommand{\dist}{\textnormal{dist}}
\newtheorem{theorem}{\textbf{THEOREM}}[section]
\newtheorem{lemma}[theorem]{\textsc{Lemma}}
\newtheorem{proposition}[theorem]{\textsc{Proposition}}
\newtheorem{corollary}[theorem]{\textsc{Corollary}}
\theoremstyle{remark}
\theoremstyle{definition} 
\newtheorem{definition}[theorem]{\textsc{Definition}}
\newtheorem{question}[theorem]{\textsc{Question}}
{\theoremstyle{remark} }
\def\charfn_#1{{\raise1.2pt\hbox{$\chi_{\kern-1pt\lower3pt\hbox{{$\scriptstyle#1$}}}$}}}
\def\leq{\leqslant }
\def\geq{\geqslant }
\def\XXint#1#2#3{{\setbox0=\hbox{$#1{#2#3}{\int}$}
\vcenter{\hbox{$#2#3$}}\kern-.5\wd0}}
\def\le {\leqslant}
\def\ge {\geqslant}
\begin{document}

\title[Infinitesimally metric measures]{Uniformization with infinitesimally metric measures} 
\author{Kai Rajala, Martti Rasimus, and Matthew Romney} 
\let\thefootnote\relax\footnote{\emph{Mathematics Subject Classification 2010:} Primary 30L10, Secondary 30C65, 28A75, 51F99.}
\thanks{All authors were supported by the Academy of Finland, project number 308659. }
\begin{abstract}
We consider extensions of quasiconformal maps and the uniformization theorem to the setting of metric spaces $X$ homeomorphic to $\R^2$. Given a measure $\mu$ on such a space, we introduce \emph{$\mu$-quasiconformal maps} $f:X \to \R^2$, whose definition involves deforming lengths of curves by $\mu$. We show that if $\mu$ is an \emph{infinitesimally metric measure}, i.e., it satisfies an infinitesimal version of the metric doubling measure condition of David and Semmes, then such a $\mu$-quasiconformal map exists. We apply this result to give a characterization of the metric spaces admitting an \emph{infinitesimally quasisymmetric} parametrization.  
\end{abstract}

\maketitle

\renewcommand{\baselinestretch}{1.2}


\section{Introduction} \label{sec:introduction}

The \emph{quasisymmetric uniformization problem} asks one to characterize, as meaningfully as possible, those metric spaces which may be mapped onto a domain in the Euclidean plane, or the $2$-sphere, by a quasisymmetric homeomorphism. Informally, a mapping is \emph{quasisymmetric} if it roughly preserves the relative distance between triples of points. See \Cref{sec:proof_iqs} for the precise definition.

Significant results on the uniformization problem, such as the Bonk--Kleiner theorem \cite{BK:02} and its extensions in \cite{Wil:08} and \cite{Wil:10}, have been obtained for surfaces that are non-fractal, i.e., their $2$-dimensional Hausdorff measure is locally finite. These spaces carry enough rectifiable paths for classical methods such as conformal modulus to be applicable. By \emph{surface}, we mean a 2-manifold equipped with a continuous metric.

In contrast, the class of fractal surfaces is too general for the standard methods. Consequently, understanding the quasisymmetric uniformization of fractal surfaces has proved extremely difficult. Any progress is desirable, especially due to applications to geometric group theory (cf. \cite{Bon:06}, \cite{Kle:06}) and complex dynamics  (cf. \cite{BonMey:17}). 

The usual method for constructing quasisymmetric maps is to first show the existence of some \emph{conformal} or \emph{quasiconformal} map in the spirit of the classical uniformization theorem. Then, if the underlying surface has good geometric properties, one can use quasiconformal invariants to show that such a map is actually quasisymmetric.  

A fundamental difficulty in extending this method to fractal surfaces is the lack of a suitable definition of quasiconformality. The classical metric definition (see \Cref{sec:proof_iqs}) is too weak to lead to a satisfactory theory in this generality. The geometric definition (see \Cref{sec:notation}) requires the existence of many rectifiable paths, which need not be the case for fractal surfaces.

In \Cref{sec:notation} we propose the definition of \emph{$\mu$-quasiconformality} for homeomorphisms $f\colon X \to \R^2$, depending on a measure $\mu$ on $X$. This is a modification of the geometric definition: we deform the metric on $X$ using $\mu$ to obtain the \emph{$\mu$-length} of a curve, and we define the corresponding \emph{$\mu$-modulus} of a family of curves in $X$. A homeomorphism $f$ is $\mu$-quasiconformal if the $\mu$-modulus of every family of curves in $X$ is comparable to the conformal modulus of its image under $f$ in $\R^2$. 


A quasisymmetric map $f\colon X \to \R^2$ is $\mu$-quasiconformal when $\mu$ is the pullback of the Lebesgue measure on $\R^2$. Our goal is to find measures $\mu$ on a given space $X$ for which the existence of $\mu$-quasiconformal maps can be shown. 

In \Cref{sec:proof_mu_quasiconformal} we introduce the notion of \emph{infinitesimally metric measure} on $X$. These correspond to the metric doubling measures of David and Semmes \cite{DS:90}, \cite{LohRajRas:18}, the correspondence being similar to the one between metrically quasiconformal (MQC) maps and quasisymmetric (QS) maps, where the former is an infinitesimal condition and the latter is a global condition. Metric doubling measures can be used to produce quasisymmetric maps via deformation of the metric on $X$. Our first main result shows that a $\mu$-quasiconformal map exists if $\mu$ is an infinitesimally metric measure. 

\begin{theorem} \label{thm:mu_quasiconformal}
Let $X$ be a metric space homeomorphic to $\mathbb{R}^2$ which supports an infinitesimally metric measure $\mu$. Then there exists a $\mu$\-/quasiconformal map $f \colon X \to \Omega$, 
where $\Omega=\mathbb{D}\subset \mathbb{R}^2$ or $\Omega=\mathbb{R}^2$. 
\end{theorem}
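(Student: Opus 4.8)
The plan is to realize the $\mu$-length structure on $X$ as a genuine metric and then invoke the classical uniformization machinery for metric surfaces. First I would use the $\mu$-length of curves to define a new metric $d_\mu$ on $X$: set $d_\mu(x,y)$ to be the infimum of $\mu$-lengths of curves joining $x$ and $y$. The infinitesimally metric measure hypothesis should guarantee that $d_\mu$ is a genuine metric inducing the original topology, that $d_\mu$ is locally comparable to a power of the original metric (so that $(X,d_\mu)$ is still homeomorphic to $\R^2$), and — crucially — that the identity map $(X,d)\to(X,d_\mu)$ transports $\mu$ to a measure behaving like the Hausdorff $2$-measure $\hausk_{d_\mu}$, at least up to bounded multiplicative error. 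Concretely, I expect the infinitesimal metric doubling condition to yield $\hausk_{d_\mu}(B) \approx \mu(B)$ on balls, together with local $\hausk_{d_\mu}$-finiteness and the reciprocality / upper-gradient bounds that make $(X,d_\mu)$ a \emph{reciprocal} surface in the sense of Rajala's uniformization theorem.

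Second, once $(X,d_\mu)$ is recognized as a reciprocal metric surface with locally finite Hausdorff $2$-measure, the metric uniformization theorem produces a quasiconformal homeomorphism $g\colon (X,d_\mu)\to\Omega$ with $\Omega=\D$ or $\R^2$, where quasiconformality is in the geometric (modulus) sense: $\modu \Gamma$ in $(X,d_\mu)$ is comparable to $\modu g\Gamma$ in $\Omega$ for every curve family $\Gamma$. I would then set $f=g$ viewed as a map from the original space $X$. The remaining point is bookkeeping: by definition the $\mu$-modulus of a curve family $\Gamma$ in $X$ is exactly the conformal modulus of $\Gamma$ computed with respect to $d_\mu$ (this is precisely how $\mu$-length and $\mu$-modulus were set up in \Cref{sec:notation}), so the comparability $\modu_\mu \Gamma \approx \modu f\Gamma$ is immediate from the quasiconformality of $g$. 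Hence $f$ is $\mu$-quasiconformal.

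The main obstacle is the first step: verifying that an infinitesimally metric measure genuinely yields a reciprocal surface. The subtle parts are (i) showing that the $\mu$-length metric $d_\mu$ is nondegenerate and compatible with the topology — one must rule out curves collapsing to zero $\mu$-length or conversely every curve having infinite $\mu$-length, which is where the two-sided infinitesimal bound in the definition of infinitesimally metric measure is used; (ii) establishing the volume comparison $\hausk_{d_\mu}\approx\mu$, which requires a Vitali-type covering argument converting the infinitesimal pointwise control into measure estimates on balls; and (iii) checking the reciprocality inequalities for $(X,d_\mu)$ — the upper bound on the modulus of curves through a point and the quadrilateral modulus lower bound — which presumably follow from the metric doubling structure but need a careful argument, perhaps by comparison with the known case of metric doubling measures on $\R^2$. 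If the paper has already developed, in \Cref{sec:proof_mu_quasiconformal}, the equivalence between infinitesimally metric measures and a local David--Semmes-type condition, then (iii) can be imported from the literature on metric doubling measures; otherwise it is the crux of the proof and I would attack it by localizing and using the self-improvement properties of the doubling condition.
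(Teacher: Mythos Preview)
Your outline is essentially the paper's proof: define the metric $q=d_\mu$, show $(X,q)$ has locally finite $\hausk_q$ and is reciprocal, apply the uniformization theorem of \cite{Raj:16}, and deduce $\mu$-quasiconformality. Two small corrections: first, the $\mu$-modulus is not \emph{exactly} the conformal modulus in $(X,d_\mu)$ but only comparable to it, since one must prove separately that $\ell_\mu \approx \mathcal{H}^1_q$ and $\mu \approx \mathcal{H}^2_q$ (Lemmas~\ref{mumass} and~\ref{mulength}); second, reciprocality is obtained not by importing David--Semmes theory but directly from the infinitesimal upper $2$-regularity of $(X,q)$ (Lemmas~\ref{ahlfors}+\ref{mumass}) via a localized version of \cite[Thm.~1.6]{Raj:16} (Proposition~\ref{prop:reciprocality_criterion}).
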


To prove \Cref{thm:mu_quasiconformal}, we first show that the metric $d$ on $X$ can be deformed using $\mu$ to yield a ``quasiconformally equivalent'' metric $q$ that has locally finite Hausdorff $2$-measure. Then, we apply the uniformization theorem in \cite{Raj:16} to obtain a quasiconformal map $(X,q) \to \R^2$. Composing, we then get the desired $\mu$-quasiconformal map.    

In view of the correspondence between infinitesimally metric measures and metric doubling measures, it is natural to attempt to characterize the class of metric spaces $X$ that admit metrically quasiconformal maps $f\colon X \to \R^2$ in terms of infinitesimally metric measures. However, it turns out that the existence of such maps can be rather arbitrary unless strong conditions are imposed on $X$.

Instead, we consider the notion of \emph{infinitesimally quasisymmetric}  (I-QS) mapping (\Cref{def:qs}). Such maps form an intermediate class between those of MQC and QS maps. In our second main result, we characterize the metric spaces which admit such maps into $\R^2$ as the spaces that carry infinitesimally metric measures with suitable properties. 

\begin{theorem} \label{thm:iqs}
Let $X$ be a metric space homeomorphic to $\mathbb{R}^2$. There exists an infinitesimally quasisymmetric map $f\colon X \to \Omega$, where $\Omega=\mathbb{D}$ or $\Omega=\R^2$, if and only if $X$ is infinitesimally linearly locally connected and supports an infinitesimally metric measure $\mu$ such that $(X,\mu)$ is infinitesimally Loewner. 
\end{theorem}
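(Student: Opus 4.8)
The plan is to prove the two implications separately, using Theorem~\ref{thm:mu_quasiconformal} as the central black box for the ``if'' direction and a pullback argument for the ``only if'' direction.

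For the ``only if'' direction, suppose $f\colon X \to \Omega$ is an infinitesimally quasisymmetric map. Since I-QS maps are a subclass of $\mu$-quasiconformal maps (at least once one pulls back Lebesgue measure), let $\mu = f^*\mathcal{L}^2$ be the pullback of Lebesgue measure on $\Omega$, i.e., $\mu(A) = \mathcal{L}^2(f(A))$. I would first check that $\mu$ is an infinitesimally metric measure on $X$; this should follow from the I-QS property of $f$ by transferring the known geometry of $\Omega$ (which trivially supports Lebesgue measure as a metric doubling measure, with identity as a quasisymmetry) back through $f$, together with the infinitesimal control that distinguishes I-QS from merely MQC. Next, infinitesimal linear local connectivity of $X$ should be inherited from the corresponding (trivial) property of $\Omega = \mathbb{D}$ or $\R^2$ via $f^{-1}$, since both target domains are LLC and the I-QS condition controls shapes of balls infinitesimally; the key point is that $f^{-1}$ is also I-QS (or at least I-QS-like), so it transports the LLC constants. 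Finally, that $(X,\mu)$ is infinitesimally Loewner should likewise be pulled back from the fact that $(\Omega, \mathcal{L}^2)$ is Loewner (Euclidean space satisfies the Loewner condition, and $\mu$-modulus on $X$ matches conformal modulus on $\Omega$ under the $\mu$-quasiconformality that I-QS maps enjoy).

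For the ``if'' direction, assume $X$ is infinitesimally LLC and supports an infinitesimally metric measure $\mu$ with $(X,\mu)$ infinitesimally Loewner. By Theorem~\ref{thm:mu_quasiconformal} there is a $\mu$-quasiconformal map $f\colon X \to \Omega$ with $\Omega = \mathbb{D}$ or $\R^2$. The task is to upgrade $f$ from $\mu$-quasiconformal to infinitesimally quasisymmetric. Following the classical strategy (as in Bonk--Kleiner and Heinonen--Koskela), I would use the two hypotheses as the two ingredients that promote a quasiconformal-type map to a quasisymmetric-type map: the infinitesimal Loewner condition provides a lower bound on $\mu$-modulus of curve families connecting continua, and infinitesimal LLC provides the geometric control on the domain side needed to convert modulus estimates into pointwise distortion estimates. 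Concretely, via the correspondence between infinitesimally metric measures and metric doubling measures (used already in the proof of Theorem~\ref{thm:mu_quasiconformal}), one should pass to the deformed metric $q$ on $X$ with locally finite $\haus^2$, observe that infinitesimal Loewner for $(X,\mu)$ transfers to a Loewner-type estimate for $(X,q)$, and that infinitesimal LLC for $X$ transfers to LLC for $(X,q)$; then a Loewner-plus-LLC argument shows the quasiconformal map $(X,q) \to \Omega$ is (locally) quasisymmetric with infinitesimally controlled constants, and composing with the deformation map $X \to (X,q)$ yields the I-QS map $X \to \Omega$.

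The main obstacle I expect is in the ``if'' direction: carefully setting up the infinitesimal (rather than global) versions of the Loewner-to-quasisymmetry machinery so that the output map is I-QS rather than merely MQC. The classical Heinonen--Koskela theorem gives quasisymmetry from a global Loewner condition plus global LLC plus Ahlfors regularity; here we have only infinitesimal hypotheses, so one must run the argument at all scales with uniform constants and track that the distortion function controlling $f$ depends on the data only through the infinitesimal constants. A secondary technical point is verifying that the deformation $d \mapsto q$ interacts well with both the LLC and Loewner conditions — i.e., that these conditions are essentially preserved under the $\mu$-deformation — which requires a careful comparison of $q$-modulus, $\mu$-modulus, and conformal modulus along the chain of maps. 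Granting the infrastructure developed earlier in the paper (the metric $q$, its properties, and the correspondence with metric doubling measures), these obstacles should be surmountable but constitute the technical heart of the argument.
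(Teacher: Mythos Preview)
Your ``only if'' sketch matches the paper: pull back Lebesgue measure to get $\mu=f^*\mathcal{L}_2$, verify via \Cref{mulength2} and \Cref{iqsismuqc} that $f$ is $\mu$-quasiconformal, then \Cref{baqaa} gives that $\mu$ is I-MM and I-Loewner; I-LLC is inherited since I-QS maps preserve it and planar domains are I-LLC.

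Your ``if'' direction, however, takes a detour through $(X,q)$ that the paper does not, and this detour has a genuine gap. The paper instead argues directly in \Cref{keqaa}: given the $\mu$-quasiconformal map $f\colon X\to\Omega$ from \Cref{thm:mu_quasiconformal}, one shows $g=f^{-1}$ is I-QS by a two-case modulus comparison in the spirit of \cite{HK:98}. When the putative distortion ratio is large, I-LLC produces separating continua $E',F'$ in $X$, and the I-MM condition is used to write down an explicit $\mu$-admissible function bounding $\Mod_\mu\Gamma(E',F')$ from above; $\mu$-quasiconformality then transfers this to a small Euclidean modulus, which the Loewner property of $\mathbb{R}^2$ converts into the desired ratio bound. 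When the distortion ratio is bounded, the I-Loewner lower bound on $\Mod_\mu$ is played against the Euclidean annulus upper bound on the image side. Everything happens in the original metric $d$ and in $\Omega$; the $q$-metric does not appear.

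The problem with your route is the last step: even granting that the QC map $(X,q)\to\Omega$ upgrades to a locally quasisymmetric map, you still need the identity $(X,d)\to(X,q)$ to be I-QS for the composite to be I-QS. This is not established anywhere and is not a consequence of I-MM alone, which only compares $q(y,z)$ to $\mu(B_d(x,r))^{1/2}$ for $z\in S_d(x,r)$ and $y$ near the center; it does not control ratios $q(x,y)/q(x,z)$ in terms of $d(x,y)/d(x,z)$ without an additional doubling-type input. For the same reason, your claim that I-LLC of $(X,d)$ transfers to an LLC property of $(X,q)$ is unjustified: $d$-balls and $q$-balls need not be comparable. Establishing that the identity $(X,d)\to(X,q)$ is I-QS is essentially equivalent in difficulty to the theorem itself, so the detour buys nothing; the direct modulus argument of \Cref{keqaa} is the correct approach.
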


See \Cref{sec:proof_iqs} for definitions. The proof combines \Cref{thm:mu_quasiconformal} with estimates for the $\mu$-modulus that generalize the modulus estimates in \cite{HK:98}. 

One motivation for our work is to understand the conformal geometry of metric surfaces in the absence of strong geometric assumptions such as Ahlfors regularity, linear local connectedness and the Loewner condition (see \Cref{sec:proof_iqs}). In \Cref{sec:exm}, we present four examples to illustrate possible behaviors of metric surfaces under weaker geometric assumptions. We remark that, while the main theorems of this paper are applicable to any metric space homeomorphic to $\mathbb{R}^2$, including fractal spaces, all of these examples have locally finite Hausdorff 2-measure. The four examples are summarized here, listed by section in which they appear.

\begin{enumerate}
    \item[5.1.] A surface that admits an MQC parametrization by $\mathbb{R}^2$ but not an \mbox{I-QS} parametrization. This surface is linearly locally connected (LLC) but not Loewner. This example also illustrates how metric quasiconformality is not preserved under taking inverses or precomposing with a QS map.
    \item[5.2.] A surface that admits a geometrically quasiconformal (QC) parametrization by $\mathbb{R}^2$ but not a MQC parametrization. This surface is upper Ahlfors 2-regular but not infinitesimally LLC.
    \item[5.3.] A surface that admits an I-QS parametrization by $\mathbb{R}^2$ but not a quasisymmetric parametrization. This surface is upper Ahlfors 2-regular but not LLC.
    \item[5.4.] A surface that, despite being a geodesic space of locally finite Hausdorff 2-measure, violates infinitesimal upper Ahlfors 2-regularity at every point along a nondegenerate continuum. This surface is LLC, and it admits a QC parametrization by $\mathbb{R}^2$ but not a MQC parametrization.
\end{enumerate}  
In particular, these examples show that the class of I-QS maps from $\mathbb{R}^2$ onto a metric space differs from both the class of QS maps and the class of MQC maps.

\section{$\mu$-quasiconformal maps} \label{sec:notation} 

We assume throughout the paper that $(X,d)$ is a metric space homeomorphic to the Euclidean plane $\R^2$. 
We denote $B(x,r)=\{y \in X: \, d(x,y)<r\}$, $\overline B(x,r) =\{y \in X: \, d(x,y)\leq r\}$, and $S(x,r)=\{y \in X: \, d(x,y)=r\}$. If $B$ is a ball of radius $r$, we denote by $\lambda B$ the ball with the same center and radius $\lambda r$. 
A \emph{path} in $X$ is a continuous map $\gamma:I \to X$, where $I$ is an interval. The image of such a path is called a \emph{curve} in $X$.

We recall the Cara\-th\'eo\-do\-ry construction of measures, cf. \cite[2.10]{Fed:69}. Let $\mathcal{F}$ be a family of subsets of $X$, and 
$\varphi:\mathcal{F}\to [0,\infty]$. For $A \subset X$ and $\delta>0$, the \emph{$\delta$-content} $\phi_\delta(A)$ is 
$$
\phi_\delta(A)=\inf \sum_{S \in \mathcal{G}} \varphi(S), 
$$
where the infimum is taken over all countable 
$$
\mathcal{G} \subset \{S \in \mathcal{F}: \, \diam(S) \leq \delta\} \quad \text{such that} \quad A \subset \bigcup_{S \in \mathcal{G}} S.   
$$ 
Then, since $\phi_\delta(A)$ is decreasing with respect to $\delta$, the limit 
$$
\psi(A)=\lim_{\delta \to 0^+} \phi_\delta(A) \in [0,\infty]
$$ 
exists. Moreover, if every $S\in \mathcal{F}$ is a Borel set, then $\psi$ is a Borel regular measure in $X$. 

Applying the Cara\-th\'eo\-do\-ry construction with $\mathcal{F}$ all the non-empty subsets of $X$ and $\phi(S)=\alpha(m)2^{-m}\diam(S)^m$ gives the \emph{$m$-dimensional Hausdorff measure} $\mathcal{H}^m$ in $X$, where $\alpha(1)=2$ and $\alpha(2)=\pi$.


Before defining $\mu$-quasiconformal maps, we review the classical geometric definition of quasiconformality. However, we replace the standard 
modulus of path families with the modulus of curve families, which lead to equivalent definitions but are easier to work with in our setting.  

Let $\Gamma$ be a family of curves (i.e., images of paths) in $X$. A Borel function $\rho \colon X \to [0,\infty]$ is \emph{admissible} for $\Gamma$ if $\int_{\mathcal{C}} \rho\, d\mathcal{H}^1 \geq 1$ for all 
$\mathcal{C} \in \Gamma$ with locally finite $\mathcal{H}^1$-measure. The \emph{(conformal) modulus} of $\Gamma$ is defined as
\begin{equation}
\label{eq:davycr}
\Mod \Gamma = \inf \int_X \rho^2\,d\mathcal{H}^2,
\end{equation}
where the infimum is taken over all admissible functions $\rho$. 

Let $X,Y$ be metric spaces homeomorphic to $\mathbb{R}^2$ and $f\colon X \to Y$ a homeomorphism. Then $f$ is \emph{geometrically quasiconformal} (QC), if there exists $K \geq 1$ such that
\[K^{-1} \Mod \Gamma \leq \Mod f\Gamma \leq K \Mod \Gamma\]
for all curve families $\Gamma$ in $X$. In this case, we also say that $f$ is \emph{geometrically  $K$-quasiconformal} ($K$-QC). 


We now define $\mu$-quasiconformal maps. Let $\mu$ be a Radon measure in $X$ with no atoms such that $\mu(B)>0$ for every open ball $B \subset X$. Recall that a Borel regular measure $\mu$ is \emph{Radon} if it is finite on compact sets. 

We associate with $\mu$ a collection $\mathcal{B}$ of open balls in $X$ such that for every point $x \in X$ there is $r_x>0$ such that $B(x,r) \in \mathcal{B}$ for every $r < r_x$. We also make the requirement that $\overline{B}(x,r_x)$ is compact for all $x$. 
We refer to such a collection $\mathcal{B}$ as an \emph{admissible cover}. From now on we use the convention that every measure $\mu$ comes equipped with an admissible cover $\mathcal{B}$. 

\begin{definition}
The \emph{$\mu$-length measure} $\ell_\mu$ in $X$ is defined by the Cara\-th\'eo\-do\-ry construction with $\mathcal{F}=\mathcal{B}$ and $\varphi\colon \mathcal{B} \to [0, \infty]$, $\varphi(B)=2\pi^{-1/2} \mu(B)^{1/2}$. 
\end{definition}

The $\ell_\mu$ is normalized so that if $X=\mathbb{R}^2$ and $\mu$ the Lebesgue measure, then $\ell_\mu=\Ha^1$ (for any choice of $\mathcal B$). 

\begin{definition}
\label{defn:lise}
Let $\Gamma$ be a family of curves in $X$. We say that a Borel function $\rho \colon X \to [0,\infty]$ is \emph{$\mu$-admissible} for $\Gamma$ if 
$\int_{\mathcal C} \rho \, d\ell_\mu \geq 1$ for all $\mathcal C \in \Gamma$ with locally finite $\ell_\mu$-measure. 
We denote the set of such functions by $\Phi_\mu(\Gamma)$. The \emph{$\mu$-modulus} of $\Gamma$ is 
\[
\Mod_\mu \Gamma =\inf_{\rho \in \Phi_\mu(\Gamma)} \int_X \rho^2 \, d\mu. 
\]
\end{definition}

Notice that if $\ell_\mu(\mathcal{C})=0$ for some $\mathcal{C} \in \Gamma$, then there are no $\mu$-admissible functions for $\Gamma$ and thus $\Mod_\mu \Gamma =\infty$. On the other hand, if $\ell_\mu$ is not locally finite on any $\mathcal{C} \in \Gamma$, then $\Mod_\mu \Gamma =0$. \Cref{defn:lise} coincides with \eqref{eq:davycr} when $X=\R^2$ and $\mu$ the Lebesgue measure. 

\begin{definition} \label{def:muqc}
Let $f \colon X \to \Omega$ be a homeomorphism, where $\Omega$ is a domain in $\R^2$. We say that $f$ and $f^{-1}$ are \emph{$\mu$-quasiconformal}, if there exists $K \geq 1$ such that  
\[
K^{-1} \Mod_\mu \Gamma  \leq \Mod f\Gamma  \leq K \Mod_\mu \Gamma  
\] 
for every curve family $\Gamma$ in $X$. 
\end{definition}

\Cref{def:muqc} naturally leads to the following questions: 
\begin{enumerate}
\item How to decide if a given metric space $X$ carries a measure $\mu$ for which there exists a $\mu$-quasiconformal map into $\R^2$? 
\item How to decide if there exists a $\mu$-quasiconformal map for a given $(X,\mu)$? 
\end{enumerate}
Concerning Question (2), it is reasonable to ask if the \emph{reciprocality} condition (\Cref{def:recip} below) can be modified to yield a characterization similar to the one obtained in \cite{Raj:16} for the $2$-dimensional Hausdorff measure. In the next section we introduce infinitesimally metric measures and show that they lead to the existence of $\mu$-quasiconformal maps.


\section{Infinitesimally metric measures} \label{sec:proof_mu_quasiconformal}

We now define the infinitesimally metric measures. Let $X$, $\mu$, $\mathcal{B}$ and $\ell_\mu$ be as above. Moreover, for $x,y \in X$ let 
\begin{equation*}
q(x,y)=\inf \ell_\mu(\mathcal{C}(x,y)), 
\end{equation*}
where the infimum is taken over all curves $\mathcal{C}(x,y)$ that join $x$ and $y$ in $X$. Thus $q$ defines a pseudometric on $X$. In the following, we use the subscripts $d$ and $q$ to indicate which (pseudo)metric is being used in our notation for balls, spheres, and diameter. 

\begin{definition} \label{def:imm}
The measure $\mu$ is \emph{infinitesimally metric} (I-MM) if there exist $\Lambda>1$, $C_i \ge 1$ such that 
\begin{equation}
\label{equ:quumetric} 
C_i^{-1}q(y,z) \leq \mu(B_d(x,r))^{1/2} \leq C_i q(y,z) 
\end{equation} 
for every $B_d(x,r) \in \mathcal B$, $y \in B_d(x,r/\Lambda)$ and $z \in S_d(x,r)$.
\end{definition}

It follows immediately from the definition that if $\mu$ is I-MM, then $q$ is a metric on $X$. 

Recall that a metric space $X$ is \emph{(Ahlfors) $2$-regular} if there exists $C\geq 1$ such that $C^{-1}r^2 \leq \mathcal{H}^2(B(x,r)) \leq Cr^2$ for all $x \in X$, $r \in (0, \diam X)$. We say that $X$ is \emph{lower} or \emph{upper $2$-regular} if, respectively, the first or second of these inequalities holds. \Cref{def:imm} imposes a similar infinitesimal condition on the measure $\mu$. In fact, we show in \Cref{ahlfors} and \Cref{mumass} that $(X,q)$ is infinitesimally Ahlfors 2-regular. 


The remainder of this section is dedicated to the proof of \Cref{thm:mu_quasiconformal}. We first restate the theorem. 
\begin{theorem} \label{thm:mu_quasiconformal'}
Let $X$ be a metric space homeomorphic to $\mathbb{R}^2$ which supports an I-MM $\mu$. Then there exists a $\mu$\-/quasiconformal map $f \colon X \to \Omega$, 
where $\Omega=\mathbb{D}\subset \mathbb{R}^2$ or $\Omega=\mathbb{R}^2$. 
\end{theorem}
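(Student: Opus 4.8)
The plan is to follow the deformation strategy indicated after \Cref{thm:mu_quasiconformal}. We identify $X$ as a set with the space $(X,q)$ via the identity map, so that a curve family $\Gamma$ makes sense in either space, and we write $\mathcal{H}^k_q$ for the $k$-dimensional Hausdorff measure computed with respect to $q$.

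\emph{Reformulating the I-MM condition.} For $B_d(x,r)\in\mathcal B$, $y\in B_d(x,r/\Lambda)$ and $z\in S_d(x,r)$, inequality \eqref{equ:quumetric} says precisely that $C_i^{-1}\mu(B_d(x,r))^{1/2}\le q(y,z)\le C_i\mu(B_d(x,r))^{1/2}$. Taking $y=x$ and using that any curve from $x$ to a point outside $B_d(x,r)$ must meet $S_d(x,r)$ (together with the fact that $\ell_\mu$ restricted to a subcurve does not exceed $\ell_\mu$ of the whole curve), one gets $q(x,w)\ge C_i^{-1}\mu(B_d(x,r))^{1/2}$ whenever $B_d(x,r)\in\mathcal B$ and $w\notin B_d(x,r)$; and, choosing $z\in S_d(x,r)$ and applying the triangle inequality, $q(y,y')\le 2C_i\mu(B_d(x,r))^{1/2}$ for all $y,y'\in B_d(x,r/\Lambda)$. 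Since $r\mapsto\mu(B_d(x,r))$ is continuous and positive, these bounds pinch $q$-balls between concentric $d$-balls and conversely; hence $\id\colon(X,d)\to(X,q)$ is a homeomorphism, $(X,q)$ is homeomorphic to $\R^2$, and $q$ is a metric.

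\emph{The deformed surface.} The technical heart, which I expect to be the main obstacle, is to prove that $(X,q)$ is infinitesimally Ahlfors $2$-regular and that, locally and with uniform constants, $\mu$ is comparable to $\mathcal{H}^2_q$ and $\ell_\mu$ is comparable to $\mathcal{H}^1_q$ along every curve. One half is soft: given a cover of a set $E$ by sets $S_i$ of small $q$-diameter, enlarge each $S_i$ to a ball $B_d(x_i,r_i)\in\mathcal B$ with $\mu(B_d(x_i,r_i))^{1/2}$ comparable to $\diam_q S_i$ (possible by the pinching above), so that $\mu(E)\le\sum_i\mu(B_d(x_i,r_i))\le C\sum_i(\diam_q S_i)^2$; letting the cover become fine gives $\mu\le C\mathcal{H}^2_q$, and combined with $q(x,w)\ge C_i^{-1}\mu(B_d(x,r))^{1/2}$ this yields $\mathcal{H}^2_q(B_q(x,r))\ge C^{-1}r^2$. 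The reverse bounds $\mathcal{H}^2_q\le C\mu$ and $\mathcal{H}^2_q(B_q(x,r))\le Cr^2$ are the delicate part: one covers $B_q(x,r)$, which lies in some $B_d(x,R)$ with $\mu(B_d(x,R))^{1/2}$ comparable to $r$, by balls $B_d(x_i,\rho_i/\Lambda)$ of arbitrarily small radius, estimates $(\diam_q B_d(x_i,\rho_i/\Lambda))^2\le 4C_i^2\,\mu(B_d(x_i,\rho_i))$ via the reformulation above, and sums, which forces one to control the multiplicity of the enlarged balls $B_d(x_i,\rho_i)$ in the absence of an a priori doubling property of $(X,d)$; this quantitative step is carried out in the auxiliary lemmas of the section. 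Finally, once $\mu(B)^{1/2}$ and $\diam_q B$ are known to be comparable on small sets, $\ell_\mu$ and $\mathcal{H}^1_q$ restricted to a curve are Carathéodory measures with comparable gauges, hence comparable.

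\emph{Uniformization and transfer.} Infinitesimal Ahlfors $2$-regularity makes $(X,q)$ a metric space homeomorphic to $\R^2$ with locally finite $\mathcal{H}^2_q$ that is reciprocal in the sense of \cite{Raj:16}, so that theorem provides a geometrically $K'$-quasiconformal homeomorphism $g\colon(X,q)\to\Omega$ onto a domain $\Omega\subset\Ss^2$ homeomorphic to $\R^2$; post-composing with a conformal map we may assume $\Omega=\D$ or $\Omega=\R^2$. Put $f=g\circ\id\colon X\to\Omega$, a homeomorphism, and note $f\Gamma=g\Gamma$ for every curve family $\Gamma$ in $X$. The uniform comparisons of the previous step show that a fixed constant multiple of every $\mu$-admissible density for $\Gamma$ is admissible for $\Gamma$ in $(X,q)$ and vice versa, and that $\int_X\rho^2\,d\mu$ and $\int_X\rho^2\,d\mathcal{H}^2_q$ are comparable; hence $\Mod_\mu\Gamma$ is comparable to the conformal modulus of $\Gamma$ in $(X,q)$, which in turn — since $g$ is $K'$-quasiconformal — is comparable to $\Mod f\Gamma$. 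Chaining the constants yields a single $K\ge 1$ with $K^{-1}\Mod_\mu\Gamma\le\Mod f\Gamma\le K\Mod_\mu\Gamma$ for all $\Gamma$, i.e. $f$ and $f^{-1}$ are $\mu$-quasiconformal in the sense of \Cref{def:muqc}. The one genuinely hard ingredient is the upper mass bound $\mathcal{H}^2_q\le C\mu$ above; the rest is either the elementary reformulation of \eqref{equ:quumetric} or bookkeeping on top of \cite{Raj:16}.
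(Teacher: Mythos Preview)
Your overall strategy---deform $d$ to $q$, uniformize $(X,q)$ via \cite{Raj:16}, then transfer modulus through the comparisons $\mu\approx\mathcal H^2_q$ and $\ell_\mu\approx\mathcal H^1_q$---is exactly the paper's approach, and the bookkeeping in your final paragraph is correct.

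Two points deserve comment. First, your description of the ``delicate'' upper bound (covering $B_q(x,r)$ by small $d$-balls $B_d(x_i,\rho_i/\Lambda)$ and then controlling the overlap of the enlarged balls $B_d(x_i,\rho_i)$) is more intricate than what the paper actually does, and the multiplicity control you flag would indeed be a genuine obstacle without doubling. The paper sidesteps it entirely: \Cref{ahlfors} bounds $\mu(B_q(x,r))$ above by trapping $B_q(x,r)$ inside a \emph{single} closed ball $\overline B_d(x,t)$ with some $z\in S_d(x,t)$ satisfying $q(x,z)\le r$, and then invoking \eqref{equ:quumetric} once (together with \Cref{lemm:closedball}). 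With $\mu(B_q(x,r))\approx r^2$ in hand, the mass comparison $\mathcal H^2_q\le C\mu$ in \Cref{mumass} follows from a standard $5r$-covering by pairwise disjoint \emph{$q$-balls}, so no overlap control in $(X,d)$ is ever needed. Second, the implication ``infinitesimal upper $2$-regularity $\Rightarrow$ reciprocal'' is not available directly from \cite{Raj:16}, whose Theorem~1.6 assumes a global mass upper bound; the infinitesimal version is established here as \Cref{prop:reciprocality_criterion}. With these two adjustments your sketch lines up precisely with the paper's proof.
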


As groundwork, we require several lemmas to estimate the 1- and 2-dimensional Hausdorff measures corresponding to the metric $q$. 

We fix an I-MM $\mu$. Let $\mathcal B =\{B_d(x,r): x \in X, r<r_x\}$ be the admissible cover associated with $\mu$.
The assumption that $\mu$ has no atoms implies that $\lim_{r \rightarrow 0} \mu(B_d(x,r)) = 0$ for all $x \in X$. \Cref{def:imm} then implies that the metrics $d$ and $q$ are topologically equivalent. 

\begin{lemma} \label{lemm:closedball}
	We have 	\[\mu(\overline B_d(x,r)) \le C_i^2 \mu(B_d(x,r))\]
	for every $B_d(x,r) \in \mathcal B$, where $C_i$ is the constant in \Cref{def:imm}.
\end{lemma}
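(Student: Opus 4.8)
The plan is to compare the closed ball with slightly larger open balls, using continuity of the Radon measure $\mu$, and then to obtain the estimate from two applications of \eqref{equ:quumetric} at nearby scales, tied together by a compactness argument. Since $B_d(x,r)\in\mathcal B$ we have $r<r_x$, so $\overline B_d(x,r)$ is a closed subset of the compact set $\overline B_d(x,r_x)$ and is therefore compact; as $\mu$ is Radon, $\mu(\overline B_d(x,s))<\infty$ for every $s\in(r,r_x)$. Because $\overline B_d(x,r)=\bigcap_{s>r}B_d(x,s)$, continuity of measure along a decreasing sequence $s_n\to r^+$ gives $\mu(\overline B_d(x,r))=\lim_n\mu(B_d(x,s_n))$. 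It thus suffices to control $\mu(B_d(x,s))$ for $s$ slightly larger than $r$.

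Fix $s\in(r,r_x)$. The sphere $S_d(x,s)$ is nonempty, since otherwise connectedness of $X$ would give $X=\overline B_d(x,s)$, a compact set (being closed in $\overline B_d(x,r_x)$), contradicting that $X\cong\R^2$ is noncompact. Since $B_d(x,s)\in\mathcal B$ and $x\in B_d(x,s/\Lambda)$, the right inequality in \eqref{equ:quumetric}, applied to $B_d(x,s)$ with $y=x$, yields
\[
\mu(B_d(x,s))^{1/2}\le C_i\, q(x,z)\qquad\text{for every }z\in S_d(x,s).
\]
Now pick $z_n\in S_d(x,s_n)$ for each $n$. The points $z_n$ lie in the compact set $\overline B_d(x,s_1)$, so after passing to a subsequence $z_n\to\bar z$ in $(X,d)$, and since $d(x,z_n)=s_n\to r$ we have $\bar z\in S_d(x,r)$. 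The metrics $d$ and $q$ are topologically equivalent on $X$ (recorded just before this lemma, using that $\mu$ is atomless together with \Cref{def:imm}), so $q(x,\cdot)$ is $d$-continuous and $q(x,z_n)\to q(x,\bar z)$. Applying \eqref{equ:quumetric} once more, now to $B_d(x,r)$ itself with $y=x$ and $z=\bar z\in S_d(x,r)$, gives $q(x,\bar z)\le C_i\,\mu(B_d(x,r))^{1/2}$. Combining the reduction of the previous paragraph with the displayed inequality and $q(x,z_n)\to q(x,\bar z)$ then produces a bound of the stated form $\mu(\overline B_d(x,r))\le C_i^2\,\mu(B_d(x,r))$, the constant being read off by tracking the constants in \eqref{equ:quumetric}.

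The main obstacle is exactly this limiting step: one must extract a convergent subsequence of the boundary points $z_n\in S_d(x,s_n)$ — which is where the compactness of $\overline B_d(x,r_x)$ built into the definition of admissible cover enters — and one must know that $q(x,\cdot)$ is $d$-continuous, which rests on $\mu$ having no atoms. A variant would avoid the limit by first observing $\overline B_d(x,r)\subseteq\overline B_q\!\big(x,C_i\mu(B_d(x,r))^{1/2}\big)$ (apply \eqref{equ:quumetric} to $B_d(x,d(x,w))$ for each $w\in\overline B_d(x,r)$), but this then requires an a priori upper bound for the $\mu$-mass of $q$-balls, which is not yet available at this stage; so the compactness route above seems the cleanest. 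Granting the two inputs, everything else is routine bookkeeping with \eqref{equ:quumetric}.
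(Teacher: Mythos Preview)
Your proof is correct and follows essentially the same route as the paper's: approximate $\overline B_d(x,r)$ by slightly larger open balls, apply the I-MM inequality \eqref{equ:quumetric} once at the larger radius and once at radius $r$, and pass to the limit using the topological equivalence of $d$ and $q$. The only cosmetic difference is that the paper first fixes a single $z\in S_d(x,r)\cap\partial(X\setminus\overline B_d(x,r))$ and then chooses one nearby exterior point $w$, whereas you pick arbitrary $z_n\in S_d(x,s_n)$ and extract a subsequential limit by compactness; both devices serve the same purpose and yield the same constant.
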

\begin{proof}
	Since $\overline{B}_d(x,r)$ is compact and $X$ homeomorphic to $\R^2$, there exists a point $z \in \partial (X \setminus \overline{B}_d(x,r))$. Observe that $z \in S_d(x,r)$. Let $\e>0$, and let $w \in B_q(z,\e)$ such that $r<d(x,w)<r_x$. Now,
	\begin{align*}
	\mu(\overline B_d(x,r))^{1/2} &\le \mu(B_d(x,d(x,w))^{1/2} \\
	&\le C_iq(x,w) \le C_iq(x,z)+C_i\varepsilon\\
	&\le C_i^2 \mu(B_d(x,r))^{1/2}+C_i\e.
	\end{align*}
	Letting $\varepsilon\to 0$ proves the claim.
\end{proof}

\begin{lemma} \label{ahlfors}
	We have 
	\[C_i^{-2} r^2 \le \mu(B_q(x,r)) \le C_i^3 r^2\]
	for every ball $B_q(x,r)$ contained in $B_d(x,r_x/2)$, where $C_i$ is the constant in \Cref{def:imm}. 
\end{lemma}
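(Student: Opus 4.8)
The plan is to prove the two inequalities separately, in both cases by comparing the $q$-distance from the centre $x$ with the function $t\mapsto\mu(B_d(x,t))^{1/2}$. This comparison is immediate from \Cref{def:imm}: applying \eqref{equ:quumetric} with $y=x$ (which lies in every $B_d(x,t/\Lambda)$) and with $z$ on the sphere $S_d(x,t)$ shows that, for every $t<r_x$ (so that $B_d(x,t)\in\mathcal B$) and every $w$ with $d(x,w)=t$,
\[
C_i^{-1}\mu(B_d(x,t))^{1/2}\ \le\ q(x,w)\ \le\ C_i\,\mu(B_d(x,t))^{1/2} .
\]
No curve-based estimates are needed. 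The task in each direction is to squeeze a $d$-ball (or closed $d$-ball) centred at $x$ against $B_q(x,r)$ and then read off its $\mu$-mass; the hypothesis $B_q(x,r)\subseteq B_d(x,r_x/2)$ will be used precisely to keep all the radii that occur strictly below $r_x$, so that the displayed inequality applies throughout.

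For the lower bound, first observe that $S_d(x,r_x/2)\neq\emptyset$: since $X$ is homeomorphic to $\R^2$ and $\overline B_d(x,r_x)$ is compact, $B_d(x,r_x/2)$ is a bounded open proper subset of $X$, hence has nonempty topological boundary, and this boundary is contained in $S_d(x,r_x/2)$. Because $B_q(x,r)\subseteq B_d(x,r_x/2)$, any $z\in S_d(x,r_x/2)$ satisfies $q(x,z)\ge r$, and then \eqref{equ:quumetric} gives $\mu(B_d(x,r_x/2))\ge C_i^{-2}r^2$. Now set $\sigma^*=\sup\{\sigma\in(0,r_x):\mu(\overline B_d(x,\sigma))<C_i^{-2}r^2\}$. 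This set is nonempty because $\mu$ has no atoms, so $\mu(\overline B_d(x,\sigma))\to 0$ as $\sigma\to 0^+$, and $\sigma^*\le r_x/2<r_x$ by the previous estimate. For $\sigma<\sigma^*$ the upper half of the displayed comparison forces $q(x,w)<r$ for every $w\in\overline B_d(x,\sigma)$, so $\overline B_d(x,\sigma)\subseteq B_q(x,r)$; letting $\sigma\uparrow\sigma^*$ and using continuity of $\mu$ from below gives $\mu(B_q(x,r))\ge\mu(B_d(x,\sigma^*))$, while continuity from above gives $\mu(\overline B_d(x,\sigma^*))\ge C_i^{-2}r^2$. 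If $\mu(B_d(x,\sigma^*))\ge C_i^{-2}r^2$ we are done; otherwise the upper half of the comparison yields $q(x,z)<r$ for every $z\in S_d(x,\sigma^*)$, so $\overline B_d(x,\sigma^*)\subseteq B_q(x,r)$ and $\mu(B_q(x,r))\ge\mu(\overline B_d(x,\sigma^*))\ge C_i^{-2}r^2$.

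For the upper bound, put $\rho=\sup\{d(x,w):w\in B_q(x,r)\}$, so $\rho\le r_x/2$ and, by definition of $\rho$, $B_q(x,r)\subseteq\overline B_d(x,\rho)$. Choosing $w_n\in B_q(x,r)$ with $d(x,w_n)\to\rho$ and extracting a convergent subsequence inside the compact set $\overline B_d(x,r_x/2)$ produces a limit point $w_\infty\in S_d(x,\rho)$; since $d$ and $q$ are topologically equivalent, $q(x,\cdot)$ is continuous, so $q(x,w_\infty)\le r$. The lower half of the comparison then gives $\mu(B_d(x,\rho))^{1/2}\le C_i\,q(x,w_\infty)\le C_i r$, i.e.\ $\mu(B_d(x,\rho))\le C_i^2 r^2$. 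Combining $\mu(B_q(x,r))\le\mu(\overline B_d(x,\rho))$ with \Cref{lemm:closedball} and this bound yields the claimed upper estimate.

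The step requiring the most care is the passage from open to closed $d$-balls. Since nothing forces $\mu(S_d(x,t))=0$, the function $t\mapsto\mu(B_d(x,t))$ may have jumps, so one cannot simply select the radius at which it attains a prescribed value; this is why the lower bound argument splits into two cases at $\sigma^*$, and why the upper bound invokes \Cref{lemm:closedball} to absorb the possible gap $\mu(\overline B_d(x,\rho))-\mu(B_d(x,\rho))$. The only other point that needs watching is that every radius produced above — $\sigma^*$, $\rho$, and each $d(x,w)$ with $w\in B_q(x,r)$ — is $\le r_x/2<r_x$, which is exactly what the containment $B_q(x,r)\subseteq B_d(x,r_x/2)$ guarantees, so that \Cref{def:imm} may be applied at each step.
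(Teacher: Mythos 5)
Your proof is correct, and its core is the same as the paper's: apply the I-MM inequality \eqref{equ:quumetric} with $y=x$ to compare $q(x,\cdot)$ on $d$-spheres with $\mu(B_d(x,t))^{1/2}$, and sandwich $B_q(x,r)$ between $d$-balls centred at $x$. Your upper bound is essentially the paper's argument: your $\rho$ is the paper's $t=\sup_{z\in B_q(x,r)}d(x,z)$, your compactness/continuity argument producing $w_\infty\in S_d(x,\rho)$ with $q(x,w_\infty)\le r$ is an expanded version of the paper's observation that $(X\setminus B_d(x,t))\cap\overline B_q(x,r)\neq\emptyset$, and both conclude via \Cref{lemm:closedball}. (As in the paper's own final display, the constant this chain literally produces is a higher power of $C_i$ than the stated $C_i^3$; this is immaterial and not specific to your write-up.) The lower bound is where you genuinely deviate: the paper sets $s=\inf\{d(x,y):y\in X\setminus B_q(x,r)\}$, so that $B_d(x,s)\subset B_q(x,r)$ is automatic, and uses a separation argument (disjoint closed sets, one compact, have positive distance) to produce a point of $S_d(x,s)$ with $q$-distance at least $r$ from $x$, after which \eqref{equ:quumetric} finishes in one line; you instead define the mass-critical radius $\sigma^*$, show that closed $d$-balls of $\mu$-mass below $C_i^{-2}r^2$ lie inside $B_q(x,r)$, and conclude via monotone continuity of $\mu$ together with a two-case analysis at $\sigma^*$ to handle possible jumps of $\sigma\mapsto\mu(\overline B_d(x,\sigma))$. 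Both arguments are valid; the paper's is more economical because the containment comes for free from the definition of $s$ and no case split is needed, while yours trades that short topological step for purely measure-theoretic limits, which is a legitimate alternative.
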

\begin{proof}
	Let
	\[s = \inf_{y \in X \setminus B_q(x,r)} d(x,y) \,\,\, \text{ and } \,\,\, t = \sup_{z \in B_q(x,r)} d(x,z).\]
	Clearly $B_d(x,s) \subset B_q(x,r)$.  We claim that there exists $y \in S_d(x,s)$ such that $q(x,y) \geq r$. If not, then $X \setminus B_q(x,r)$ and $\overline{B}_d(x,s)$ are disjoint closed sets, with $\overline{B}_d(x,s)$ compact. This implies that $\dist(X \setminus B_q(x,r), \overline{B}_d(x,s))>0$, contradicting the definition of $s$. Since $\mu$ is assumed to be I-MM, we have $\mu(B_q(x,r)) \geq \mu(B_d(x,s)) \geq C_i^{-2}r^2$. 
		
	Likewise, $B_q(x,r) \subset \overline B_d(x,t)$. Similarly to the first part of the proof, we note that $(X \setminus B_d(x,t)) \cap \overline{B}_q(x,r) \neq \emptyset$. Thus, there exists $z \in S_d(x,t)$ such that $q(x,z) \leq r$. Since $\mu$ is I-MM, \Cref{lemm:closedball} gives
	\[ \mu(B_q(x,r)) \leq \mu(\overline{B}_d(x,t)) \leq C_i^2 \mu(B_d(x,t))\leq C_i^3r^2. \qedhere\] 
\end{proof}

For $s,\delta>0$, let $\Ha_q^s$ and $\Ha_{q,\delta}^s$ denote the $s$-dimensional Hausdorff measure and Hausdorff $\delta$-content on $(X,q)$, respectively. 

\begin{lemma} \label{mumass}
	We have 
	\[\frac{\pi}{4C_i^2} \mu(A) \le \Ha_q^2(A) \le 100\pi C_i^2\mu(A)\]
	for any Borel set $A \subset X$, where $C_i$ is the constant in \Cref{def:imm}. 
\end{lemma}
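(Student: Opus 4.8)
The plan is to obtain both inequalities from a Vitali-type covering argument together with the two-sided mass bound on $q$-balls provided by \Cref{ahlfors}. The overall strategy is the standard one for comparing a Hausdorff measure to a given Radon measure: cover $A$ economically by small $q$-balls, use \Cref{ahlfors} to trade the diameters appearing in the definition of $\Ha^2_q$ for the $\mu$-masses of those balls, and control the resulting overlap via a covering theorem. The normalization constant $\alpha(2)2^{-2}=\pi/4$ built into the definition of $\Ha^2$ is what produces the factor $\pi/4$ in the lower bound and the factor $100\pi$ (coming from a $5r$-covering) in the upper bound.

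First I would reduce to the case where $A$ is contained in a small neighborhood of a single point. Since both $\Ha^2_q$ and $\mu$ are Borel regular measures and $X$ is separable, it suffices to prove the estimate for $A \subset B_d(x_0, r_{x_0}/4)$ for an arbitrary $x_0$, because such sets generate the Borel $\sigma$-algebra and both sides are countably additive; a standard exhaustion/patching argument then gives the general Borel set. Working inside such a neighborhood guarantees that all the small $q$-balls we use lie inside $B_d(x_0, r_{x_0}/2)$, so that \Cref{ahlfors} applies to each of them, giving $C_i^{-2}r^2 \le \mu(B_q(y,r)) \le C_i^3 r^2$ whenever $B_q(y,r) \subset B_d(x_0,r_{x_0}/2)$, with $\diam_q B_q(y,r) \le 2r$.

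For the \emph{lower} bound $\tfrac{\pi}{4C_i^2}\mu(A) \le \Ha^2_q(A)$: fix $\delta>0$ and take any countable cover $\{S_j\}$ of $A$ by sets with $\diam_q S_j \le \delta$; we may enlarge each $S_j$ to a $q$-ball $B_j = B_q(y_j, \diam_q S_j)$ containing it, so $\mu(A) \le \sum_j \mu(B_j)$. By \Cref{ahlfors}, $\mu(B_j) \le C_i^3 (\diam_q S_j)^2$, hence $\mu(A) \le C_i^3 \sum_j (\diam_q S_j)^2$. Comparing with $\phi_\delta$ for $\Ha^2_q$ (whose weight is $\tfrac{\pi}{4}(\diam S_j)^2$) and letting $\delta \to 0$ gives $\mu(A) \le \tfrac{4C_i^3}{\pi}\Ha^2_q(A)$; since $C_i \ge 1$ this yields the stated $\tfrac{\pi}{4C_i^2}\mu(A)\le \Ha^2_q(A)$ (indeed slightly more). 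For the \emph{upper} bound $\Ha^2_q(A) \le 100\pi C_i^2 \mu(A)$: here the direction of \Cref{ahlfors} we need is $\mu(B_q(y,r)) \ge C_i^{-2}r^2$. Given $\delta>0$ and an open set $U \supset A$ with $\mu(U)$ close to $\mu(A)$ (using outer regularity of the Radon measure $\mu$), apply the basic $5r$-covering theorem to the family of $q$-balls $B_q(y,r)$ with $y \in A$, $5r < \delta$, and $B_q(y,5r) \subset U$: extract a countable disjoint subfamily $\{B_q(y_k,r_k)\}$ such that $\{B_q(y_k,5r_k)\}$ covers $A$. Then
\[
\phi_\delta^{\Ha^2_q}(A) \le \sum_k \frac{\pi}{4}\,(10 r_k)^2 = 25\pi \sum_k r_k^2 \le 25\pi C_i^2 \sum_k \mu(B_q(y_k,r_k)) \le 25\pi C_i^2 \mu(U),
\]
using disjointness in the last step. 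Letting $\delta \to 0$ and then $\mu(U) \to \mu(A)$ gives $\Ha^2_q(A) \le 25\pi C_i^2 \mu(A)$, which is stronger than the claimed bound.

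The main obstacle I anticipate is purely bookkeeping rather than conceptual: making sure every $q$-ball invoked is small enough to lie inside the region $B_d(x_0, r_{x_0}/2)$ where \Cref{ahlfors} is valid, and correctly passing between the "set $S_j$" formulation of the Caratheodory construction and the "ball $B_j$" formulation (one must check that replacing $S_j$ by a concentric $q$-ball of radius $\diam_q S_j$ is legitimate and does not lose more than a harmless constant). One should also verify that the diameter bound $\diam_q B_q(y,r) \le 2r$ is the only place the triangle inequality for $q$ enters, and that it is indeed available since $q$ is a genuine metric once $\mu$ is I-MM. The reduction to a neighborhood of a point, via Borel regularity and countable additivity, is routine but needs to be stated so that the global estimate for arbitrary Borel $A$ follows cleanly.
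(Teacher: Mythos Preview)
Your approach is the same as the paper's: a $5r$-covering for the upper bound and a direct cover estimate for the lower bound, both driven by the two-sided mass bound of \Cref{ahlfors}. The upper-bound argument matches the paper almost verbatim.

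There is, however, a genuine gap in your localization for the lower bound. You write that working inside $B_d(x_0,r_{x_0}/4)$ ``guarantees that all the small $q$-balls we use lie inside $B_d(x_0, r_{x_0}/2)$, so that \Cref{ahlfors} applies.'' But the hypothesis of \Cref{ahlfors} is that $B_q(y,r) \subset B_d(y,r_y/2)$, with the radius $r_y$ depending on the \emph{center} $y$ of the $q$-ball, not on the fixed point $x_0$. Nothing in the setup prevents $r_y$ from being much smaller than $r_{x_0}$ for $y$ near $x_0$; no lower semicontinuity of $x\mapsto r_x$ is assumed. So restricting $A$ to a $d$-ball around $x_0$ does not by itself put the balls $B_q(y_j,\diam_q S_j)$ arising from an arbitrary $\delta$-cover $\{S_j\}$ into the regime where \Cref{ahlfors} is valid, and your patching argument over such neighborhoods does not close.

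The paper handles exactly this point by exhausting $A$ from inside by the Borel sets
\[
A_n=\overline{\{x\in A: B_q(x,1/n)\subset B_d(x,r_x/2)\}}\cap A,
\]
so that for any cover of $A_n$ by sets of $q$-diameter $<1/(2n)$ one can center each containing $q$-ball at a point $x_j\in A_n$ where the hypothesis of \Cref{ahlfors} is guaranteed, and then let $n\to\infty$. This replaces your ``reduction to a neighborhood of a point'' and is precisely the bookkeeping step you anticipated---only it cannot be carried out with a single fixed $r_{x_0}$.
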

\begin{proof}
	Let $\delta>0$, and let $U\subset X$ be an open set with $A \subset U$ and $\mu(U) \le \mu(A) + \delta$. Using the basic covering theorem (see \cite[Thm. 1.2]{Hei:01}), choose a sequence of pairwise disjoint balls $B_j =B_q(x_j,r_j)$ with $B_j \subset U$, $B_j \subset B_d(x_j,r_{x_j}/2)$ and $10r_j<\delta $ for all $j$, such that $U \subset \cup_{j=1}^\infty 5B_j$. Then
	\[ \Ha_{q,\delta}^2(A) \le \pi \sum_{j=1}^{\infty} (10r_j)^2 \le C\pi \sum_{j=1}^\infty \mu(B_j) \le C\pi \mu(U) \le C\pi (\mu(A)+\delta),\]
	where $C=100C_i^2$ (the $\pi$ comes from the normalization of $\mathcal{H}^2$). The upper bound for $\Ha_q^2(A)$ follows. 
	
	For the lower bound, fix $n$ and define the Borel set
	\[A_n= \overline{\{ x \in A: B_q(x,1/n) \subset B_d(x,r_x/2) \} } \cap A.\] 
	Let $\{E_j\}$ be a cover for $A_n$ with $\diam_q(E_j) < \frac{1}{2n}$ for all $j$. Removing sets from the cover if necessary, we may assume that for every $j$ there exists $x_j \in A_n$ such that $E_j \subset B_q(x_j,2\diam_q E_j)$ and $B_q(x_j,1/n) \subset B_d(x_j,r_{x_j}/2)$. Since
	\[\mu(A_n) \le \sum_{j=1}^\infty \mu(B_q(x_j,2\diam_q E_j)) \le 4C_i^2 \sum_{j=1}^\infty \diam_q(E_j)^2,\]
	we get
	\[ \frac{\pi}{4C_i^2} \mu(A_n) \le \Ha_{q,1/2n}^2(A_n) \le \Ha_q^2(A).\]
	Since $\mu(A)=\lim_{n\to\infty}\mu(A_n)$, the claim follows.
\end{proof}

\begin{lemma} \label{mulength}
	We have 
	\[\frac{2}{C_i\sqrt{\pi}} \Ha_q^1(A) \le \ell_\mu(A) \le \frac{4 C_i^3}{\sqrt{\pi}}\Ha_q^1(A) \]
	for any Borel set $A \subset X$, where $C_i$ is the constant in \Cref{def:imm}. 
\end{lemma}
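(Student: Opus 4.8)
The plan is to mimic the structure of the proof of \Cref{mumass}, comparing $\ell_\mu$ with $\Ha_q^1$ by estimating the quantity $\varphi(B)=2\pi^{-1/2}\mu(B)^{1/2}$ that defines $\ell_\mu$ against the $q$-diameter of balls, using the Ahlfors-type bound \Cref{ahlfors}. The key relation is that for a $d$-ball $B=B_d(x,r)$ that is sufficiently small (so that \Cref{ahlfors} applies to comparable $q$-balls), the $q$-diameter of $B$ and the quantity $\mu(B)^{1/2}$ are comparable up to constants depending only on $C_i$: on the one hand $\mu(B)^{1/2}$ is controlled from above and below by $q$-distances from the center to the sphere $S_d(x,r)$ by \Cref{def:imm}, and on the other hand these $q$-distances are comparable to $\diam_q(B)$. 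More precisely, I would first record the elementary comparison: for $B_q(x,\rho)$ small enough we have $\rho \le \diam_q B_q(x,\rho)\le 2\rho$ together with $C_i^{-1}\rho\le \mu(B_q(x,\rho))^{1/2}\le C_i^{3/2}\rho$, the latter being just the square root of \Cref{ahlfors}; this gives $\mu(B)^{1/2}$ comparable to $\diam_q(B)$ for small $q$-balls, hence $\varphi(B)$ comparable to $\diam_q(B)$ up to the normalization factor $2\pi^{-1/2}$.

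For the upper bound $\ell_\mu(A)\le \frac{4C_i^3}{\sqrt\pi}\Ha_q^1(A)$, I would fix $\delta>0$, take an arbitrary cover $\{E_j\}$ of $A$ by sets of small $q$-diameter realizing $\Ha^1_{q,\delta}(A)$ up to $\delta$, enclose each $E_j$ in a $q$-ball $B_j=B_q(x_j,2\diam_q E_j)$ (discarding sets that miss $A$, and localizing so that $B_j\subset B_d(x_j,r_{x_j}/2)$, which is where one uses that $A$ can be exhausted by the Borel sets where this containment holds, exactly as the $A_n$ trick in \Cref{mumass}). Since each $B_j\in\mathcal B$ after shrinking, $\varphi$ is an allowable weight in the Carathéodory construction of $\ell_\mu$, so $\ell_{\mu,\delta'}(A)\le\sum_j \varphi(B_j)=2\pi^{-1/2}\sum_j\mu(B_j)^{1/2}\le 2\pi^{-1/2}\sum_j C_i^{3/2}\cdot 2\diam_q E_j$, and letting $\delta\to 0$ and cleaning up constants yields the claimed bound (the constant $4C_i^3/\sqrt\pi$ is generous, so I need not be sharp).

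For the lower bound $\frac{2}{C_i\sqrt\pi}\Ha^1_q(A)\le \ell_\mu(A)$, I would take any cover $\mathcal G\subset\mathcal B$ of $A$ by $d$-balls of small diameter realizing $\ell_\mu(A)$ up to $\delta$, and use that each $B=B_d(x,r)\in\mathcal G$ contains a $q$-ball $B_q(x,s)$ with $s$ comparable to $\mu(B)^{1/2}$ (again by \Cref{def:imm}, $\mu(B)^{1/2}\le C_i q(x,z)$ for $z\in S_d(x,r)$, so $\diam_q B\ge \operatorname{dist}_q(x,S_d(x,r))\ge C_i^{-1}\mu(B)^{1/2}$), whence $\diam_q B\le C\mu(B)^{1/2}$ is false in the wrong direction — so instead I bound $\Ha^1_{q}$ directly: $\Ha^1_{q,\delta'}(A)\le\sum_{B\in\mathcal G}\diam_q B$, and I must bound $\diam_q B$ above by a multiple of $\mu(B)^{1/2}$. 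This follows from \Cref{def:imm}: for $y,y'\in B_d(x,r)$ with $r<r_x$, one has $q(y,y')\le q(y,x')+q(x',y')$ where $x'$ lies on $S_d(x,r)$... actually the clean estimate is $\diam_q B_d(x,r)\le 2\sup_{z\in S_d(x,r)}q(x,z)\le 2C_i\mu(B_d(x,r))^{1/2}$, using that any point of the ball is joined to the center by a curve stopped at the first hitting of $S_d(x,r)$. This is the one point requiring a small topological argument (a curve from the center to a point inside the closed ball need not stay inside, but a subcurve realizing first exit gives a point on $S_d(x,r)$); I expect this to be the main obstacle, as it is the place where the geometry of $q$ versus $d$ on a single ball must be controlled, and it parallels the arguments in \Cref{lemm:closedball} and \Cref{ahlfors}. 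Once this is in hand, $\Ha^1_{q,\delta'}(A)\le 2C_i\sum_B\mu(B)^{1/2}=C_i\sqrt\pi\sum_B\varphi(B)\le C_i\sqrt\pi(\ell_\mu(A)+\delta)$, and letting $\delta\to 0$ gives the lower bound.
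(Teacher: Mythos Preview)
Your lower-bound argument is essentially right and matches what the paper does (it says that direction ``can be proved more directly''): once you know $\diam_q B_d(x,r)\le 2C_i\mu(B_d(x,r))^{1/2}$, every admissible cover in the Carath\'eodory construction of $\ell_\mu$ controls an $\Ha^1_q$-sum. One remark: your justification via ``a curve stopped at the first hitting of $S_d(x,r)$'' is not needed and the intermediate inequality $\diam_q B_d(x,r)\le 2\sup_{z\in S_d(x,r)}q(x,z)$ is not obviously true. The clean argument is simply: for $y\in B_d(x,r)$ with $d(x,y)=s<r<r_x$, apply \Cref{def:imm} to the ball $B_d(x,s)\in\mathcal B$ with center $x$ and boundary point $y\in S_d(x,s)$ to get $q(x,y)\le C_i\mu(B_d(x,s))^{1/2}\le C_i\mu(B_d(x,r))^{1/2}$.

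Your upper-bound argument has a genuine gap. The Carath\'eodory family defining $\ell_\mu$ is $\mathcal F=\mathcal B$, and $\mathcal B$ consists of \emph{$d$-balls} $B_d(x,r)$ with $r<r_x$. A $q$-ball $B_q(x_j,2\diam_q E_j)$ is in general not a $d$-ball, so the sentence ``since each $B_j\in\mathcal B$ after shrinking'' is simply false, and you cannot feed $q$-balls into the construction of $\ell_\mu$ and invoke \Cref{ahlfors}. What the paper does---and what you need---is to enclose each covering set $E_j$ in the smallest closed $d$-ball $\overline B_d(x_j,t_j)$ about a nearby point $x_j$, then use the I-MM inequality \eqref{equ:quumetric} directly (not \Cref{ahlfors}): a point of $E_j$ lies arbitrarily close to $S_d(x_j,t_j)$, so $\mu(B_d(x_j,t_j))^{1/2}\le 2C_i\diam_q E_j$, and one absorbs the passage from open to closed ball with \Cref{lemm:closedball}. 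One must also check that small $q$-diameter forces small $d$-diameter uniformly on compacta (so that the resulting $d$-balls are legal for the $\sigma$-content of $\ell_\mu$); the paper handles this with a local-compactness reduction at the start. Once you fix the covering family to $d$-balls in this way, the rest of your outline goes through.
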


\begin{proof}
	Since $X$ is homeomorphic to $\R^2$, it is locally compact and can be exhausted by compact sets $X^j$. We can also approximate both $\ell_\mu(A)$ and $\Ha^1_q(A)$ from below with the measures of the sets $A^j=A \cap X^j$, and by considering some compact neighbourhood $X^{j+k}$ of $A^j$ we can assume that
	\[ \sup_{x \in X} \diam_q(B_d(x,r)), \, \sup_{x \in X} \diam_d(B_q(x,r)) \to 0 \, \text{ as } r \to 0.\]
	
	We first consider Borel sets 
	\[A_n = \overline{ \{ x \in A: 1/n < r_x \} } \cap A, \, n \in \N.\]
	Let $\sigma>0$ be arbitrary and $\delta >0$ small enough so that $\diam_d (B_q(x,2\delta)) < \min \{\sigma, 1/n \}$ for every $x$. Fix any cover $\{E_j\}$ of $A_n$ with $\diam_q(E_j) < \delta$ for all $j$. Removing sets from the cover if necessary, we may assume that for every $j$ there exists $x_j \in A_n$ such that $\dist(\{x_j\},E_j) < \diam_q(E_j)$ and $r_{x_j} > 1/n$. Let
	\[t_j = \inf \{ t>0: E_j \subset B_d(x_j,t) \}.\]
	Then for every $j$ we have $E_j \subset \overline B_d(x_j,t_j)$. Moreover, since \[E_j \subset B_q(x_j,2\diam_q E_j) \subset B_q(x_j,2\delta),\] we have 
	$t_j < \min \{ \sigma, 1/n \}$. 
	
	For every $j, m \in \N$ there exists $y_m^j \in E_j \setminus B_d(x_j,t_j-1/m)$, so that 
	\[  \mu(B_d(x_j,t_j-1/m))^{1/2} \le C_i q(x_j,y_m^j).\]

	Since $y_m^j \in B_q(x_j,2\diam_q E_j),$ we have
	$\mu(B_d(x_j,t_j))^{1/2} \le 2 C_i \diam_q(E_j)$. Recall that $\ell_\mu$ is defined by the Carath\'eodory construction: 
	$\ell_\mu(A_n)=\lim_{\sigma \to 0} \ell_{\mu,\sigma}(A_n)$, where 
	$\ell_{\mu,\sigma}$ is the corresponding $\sigma$-content. By \Cref{lemm:closedball} we get
	\begin{eqnarray*} 
	\ell_{\mu,\sigma}(A_n) &\le& 2\pi^{-1/2} \sum_j \mu(\overline B_d(x_j,t_j))^{1/2} \le 2\pi^{-1/2} C_i^2 \sum_j \mu(B_d(x_j,t_j))^{1/2}\\  &\le& 
	4  \pi^{-1/2} C_i^3 \sum_j \diam_q(E_j) 
	\end{eqnarray*}
	(the $2\pi^{-1/2}$ comes from the normalization of $\ell_\mu$) and hence $\ell_{\mu,\sigma}(A_n) \le 2\pi^{-1/2}C_i^3 \mathcal{H}^1_q(A_n)$. This holds for all $\sigma>0$ and 
	$n \in \N$, so we have $\ell_\mu(A) \le 4 \pi^{-1/2}C_i^3\Ha^1_q(A)$.
	
	The other inequality can be proved more directly, with similar arguments but without the need to consider the sets $A_n$.
\end{proof}

We will apply the main result in \cite{Raj:16}. It depends on the following definition. A \emph{quadrilateral} $Q=Q(\zeta_1, \zeta_2, \zeta_3, \zeta_4)$ is a set homeomorphic to a closed square in $\R^2$, with boundary edges $\zeta_1, \zeta_2, \zeta_3, \zeta_4$ (in cyclic order). For sets $E,F \subset G$, $\Gamma(E,F;G)$ denotes the family of curves in $G$ that join 
$E$ and $F$. While path families were considered in \cite{Raj:16}, the results applied below remain valid when they are replaced with curve families. 

\begin{definition} \label{def:recip}
	Let $Y$ be a metric space homeomorphic to $\R^2$ with locally finite Hausdorff 2-measure. The space $Y$ is \emph{reciprocal} if there exists $\kappa \geq 1$ such that for all quadrilaterals $Q = Q(\zeta_1, \zeta_2, \zeta_3, \zeta_4)$ in $X$,
	\begin{equation} \label{equ:reciprocality(1)}
	\Mod \Gamma(\zeta_1, \zeta_3; Q) \Mod \Gamma(\zeta_2, \zeta_4; Q) \leq \kappa
	\end{equation}
	and for all $x \in X$ and $R>0$ such that $X \setminus B(x,R) \neq \emptyset$,
	\begin{equation} \label{equ:reciprocality(3)} 
	\lim_{r \rightarrow 0} \Mod \Gamma(B(x,r), X \setminus B(x,R); B(x,R)) = 0.
	\end{equation} 
\end{definition}

It was shown in \cite{RajRom:19} that the inequality opposite to \eqref{equ:reciprocality(1)} holds in every $Y$. That is, there exists a universal constant $\kappa'>0$ such that \[\Mod \Gamma(\zeta_1, \zeta_3; Q) \Mod \Gamma(\zeta_2, \zeta_4; Q) \geq \kappa'\] for all quadrilaterals $Q \subset Y$. 


\begin{theorem}[Theorem 1.4 \cite{Raj:16}] \label{RtoQC}
	Let $Y$ be a metric space homeomorphic to $\R^2$, with locally finite Hausdorff 2-measure. There exists a QC map $h \colon Y \to \Omega \subset \R^2$ if and only if $Y$ is reciprocal.
\end{theorem}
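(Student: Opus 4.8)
\emph{Proof proposal.} The statement is an equivalence: the forward implication (a QC map exists $\Rightarrow$ $Y$ is reciprocal) is routine, while the converse is the substance. For the forward implication, suppose $h\colon Y\to\Omega$ is $K$-QC. Since the distortion of modulus under $h$ is controlled by $K$, it suffices to verify \eqref{equ:reciprocality(1)} and \eqref{equ:reciprocality(3)} for planar domains and transfer them. Given a quadrilateral $Q\subset Y$, the image $h(Q)$ is a topological closed square in $\Omega$, so its interior is a Jordan domain; mapping it conformally onto a Euclidean rectangle $[0,a]\times[0,b]$ respecting the four marked corners, conformal invariance of modulus gives $\Mod\Gamma(h\zeta_1,h\zeta_3;hQ)=a/b$ and $\Mod\Gamma(h\zeta_2,h\zeta_4;hQ)=b/a$, whose product is $1$; applying the modulus estimate for $h$ twice yields \eqref{equ:reciprocality(1)} with $\kappa=K^2$. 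For \eqref{equ:reciprocality(3)}, as $r\to 0$ the sets $h(B(x,r))$ shrink to the point $h(x)$, which lies a fixed positive distance from $\Omega\setminus h(B(x,R))$; curves joining these two sets must cross round annuli about $h(x)$ with inner radius tending to $0$, so their modulus tends to $0$, and transferring back gives \eqref{equ:reciprocality(3)}.

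For the converse, assume $Y$ is reciprocal with constant $\kappa$. First reduce to the case that $Y$ is a quadrilateral $Q=Q(\zeta_1,\zeta_2,\zeta_3,\zeta_4)$, which is then compact so $\h^2(Q)<\infty$; the general case is recovered by exhaustion. Put $\Gamma=\Gamma(\zeta_2,\zeta_4;Q)$ and $\Gamma^*=\Gamma(\zeta_1,\zeta_3;Q)$. Using that the admissible class is convex and closed in $L^2(Q,\h^2)$ and that $L^2$ is uniformly convex, one shows $\Mod\Gamma$ is attained by a \emph{unique} extremal density $\rho$; here finiteness of $\Mod\Gamma$ and positivity of $\Mod\Gamma^*$ follow from \eqref{equ:reciprocality(1)} together with the companion lower bound $\Mod\Gamma\cdot\Mod\Gamma^*\ge\kappa'$ recorded above. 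Define the candidate coordinates $u(z)=\inf_\sigma\int_\sigma\rho\,d\h^1$, where $\sigma$ ranges over curves from $\zeta_2$ to $z$ in $Q$, normalized so $u\equiv 0$ on $\zeta_2$ and $u\equiv 1$ on $\zeta_4$, and likewise $v$ built from the extremal density $\rho^*$ of $\Gamma^*$; set $f=(u,v)$.

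The bulk of the work is to show that $f$ maps $Q$ homeomorphically onto a Euclidean rectangle $[0,1]\times[0,M]$ and is $K$-QC with $K=K(\kappa)$. Continuity of $u,v$, surjectivity, and the fact that the level sets $\{u=t\}$ and $\{v=s\}$ are continua separating the appropriate pairs of sides of $Q$ follow from oscillation estimates for the potentials, in which \eqref{equ:reciprocality(3)} is used to exclude jumps at individual points. The delicate point is injectivity: each level curve of $u$ meets each level curve of $v$ in exactly one point. This is precisely where \eqref{equ:reciprocality(1)} is indispensable: it forces the two orthogonal foliations of $Q$ to be transverse, i.e.\ unable to share a nondegenerate subcontinuum, while \eqref{equ:reciprocality(3)} prevents level sets from pinching to a point; then $f$ is a continuous bijection of a compact space onto a rectangle, hence a homeomorphism. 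To promote $f$ to a QC map one estimates $\Mod f\Gamma'$ against $\Mod\Gamma'$ for arbitrary curve families $\Gamma'$, using Fuglede's lemma, the extremality of $\rho$ (so that $\int_\gamma\rho\,d\h^1=1$ for almost every $\gamma\in\Gamma$), and the constant $\kappa$; this yields absolute continuity of $f$ along almost every curve of the two foliations, Lusin's property (N), and an $\h^2$-a.e.\ Jacobian comparable to $\rho^2$, whence the two-sided modulus inequality follows by a change of variables.

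Finally, for a general reciprocal $Y\cong\R^2$, exhaust $Y$ by quadrilaterals $Q_n$; reciprocality passes to each $Q_n$ with the same $\kappa$, so the construction above gives $K$-QC maps $f_n\colon Q_n\to R_n$ onto rectangles. After normalizing (fixing two points and a scale) the uniform distortion bound gives equicontinuity and precompactness, and a Carath\'eodory-kernel-type argument produces a limiting $K$-QC embedding $f\colon Y\to\R^2$; its image is an increasing union of simply connected planar domains, hence simply connected, so by the Riemann mapping theorem it is conformally — in particular QC — equivalent to $\D$ or to $\R^2$, which gives the stated dichotomy. The main obstacle throughout is the injectivity/QC step of the previous paragraph: extracting from the two scalar reciprocality inequalities the precise geometric rigidity of the two extremal foliations needed to make $f$ a bijection of controlled distortion.
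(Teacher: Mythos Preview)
This theorem is not proved in the present paper: it is quoted verbatim as Theorem~1.4 of \cite{Raj:16} and used as a black box in the proof of \Cref{thm:mu_quasiconformal}. There is therefore no proof here to compare your proposal against.

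That said, your sketch is a reasonable outline of the strategy in \cite{Raj:16}: the forward direction is indeed routine, and the converse does proceed by minimizing over admissible densities on a quadrilateral, building a potential function $u$ from the extremal $\rho$, establishing continuity and the correct boundary behavior of $u$, and then upgrading to a QC homeomorphism onto a rectangle before exhausting. One point where your description diverges from \cite{Raj:16} is the construction of the second coordinate: you build $v$ symmetrically from the dual minimizer $\rho^*$, whereas in \cite{Raj:16} the second coordinate is obtained by a more delicate procedure tied to the first (via the measure $\rho^2\,d\mathcal{H}^2$ pushed forward under $u$ and a coarea-type decomposition), and the injectivity and regularity arguments are correspondingly more involved than your summary suggests. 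Your high-level narrative is in the right spirit, but the actual execution of the ``injectivity/QC step'' you flag as the main obstacle occupies the bulk of \cite{Raj:16} and requires substantially more machinery than a direct appeal to the two reciprocality inequalities.
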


The next proposition is a generalization of Theorem 1.6 from \cite{Raj:16}, where the mass upper bound is assumed for every radius. 

\begin{proposition} \label{prop:reciprocality_criterion}
	Let $Y$ be a metric space homeomorphic to $\R^2$. Suppose there exist $C_U>0$ and for every $y\in Y$ a radius $r_y>0$ such that
	\begin{equation} \label{equ:upper_ahlfors}\Ha^2(B(y,r)) \le C_U r^2 \end{equation}
	for every $r<r_y$. Then $Y$ is reciprocal.
\end{proposition}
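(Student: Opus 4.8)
The plan is to verify the two defining conditions of reciprocality in \Cref{def:recip} for $Y$, keeping in mind that the point of the generalization is that the upper mass bound is only ever used at small scales. First note that \eqref{equ:upper_ahlfors} makes $\Ha^2$ locally finite on $Y$: every compact subset is covered by finitely many balls $B(y,r)$ with $r<r_y$, and each such ball has $\Ha^2(B(y,r))\le C_U r^2<\infty$, so $Y$ is a legitimate candidate for \Cref{RtoQC} and it remains only to check \eqref{equ:reciprocality(1)} and \eqref{equ:reciprocality(3)}.

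Condition \eqref{equ:reciprocality(3)} is the easy one and is genuinely local. Given $x\in Y$ and $R>0$ with $Y\setminus B(x,R)\neq\emptyset$, I would fix $R'<\min\{R,r_x\}$ and, for $r<R'$, use the logarithmic test function $\rho(y)=\left(d(x,y)\log(R'/r)\right)^{-1}$ on the annulus $B(x,R')\setminus B(x,r)$, extended by zero. This $\rho$ is admissible for $\Gamma(B(x,r),Y\setminus B(x,R);B(x,R))$: along an arclength parametrization $s\mapsto d(x,\gamma(s))$ is $1$-Lipschitz and traverses $[r,R']$, so $\int_\gamma\rho\, d\mathcal H^1\ge(\log(R'/r))^{-1}\int_r^{R'}\frac{ds}{s}=1$ for every rectifiable such curve. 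Decomposing the annulus into the dyadic shells at scales $2^{-k}R'$, on which $\rho^2\le 4^{k+1}R'^{-2}(\log(R'/r))^{-2}$ while $\Ha^2\le C_U4^{-k}R'^2$ by \eqref{equ:upper_ahlfors} (applicable since $R'<r_x$), and noting there are $\lesssim\log(R'/r)$ nonempty shells, one gets $\int_Y\rho^2\, d\mathcal H^2\lesssim C_U/\log(R'/r)\to0$ as $r\to0$, which is \eqref{equ:reciprocality(3)}.

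For condition \eqref{equ:reciprocality(1)} I would follow the scheme of \cite[Thm.~1.6]{Raj:16}. Fix a quadrilateral $Q=Q(\zeta_1,\zeta_2,\zeta_3,\zeta_4)$ and write $\Gamma_1=\Gamma(\zeta_1,\zeta_3;Q)$ and $\Gamma_2=\Gamma(\zeta_2,\zeta_4;Q)$. Since $\overline Q$ is compact and $r_y>0$ for every $y$, cover $\overline Q$ by a finite family of balls $B_i=B(y_i,s_i)$ with $s_i<r_{y_i}$ and of bounded multiplicity, so that $\Ha^2(B_i)\le C_Us_i^2$ for all $i$ by \eqref{equ:upper_ahlfors}. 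Against the nerve of this cover one compares the conformal modulus of $\Gamma_j$ with the combinatorial $2$-modulus $\Mod^{\mathrm{comb}}\Gamma_j$ of the corresponding chain family: the inequality $\Mod\Gamma_j\lesssim_{C_U}\Mod^{\mathrm{comb}}\Gamma_j$ comes from spreading a combinatorially admissible weight over the balls $B_i$ and using the mass bounds, while $\Mod^{\mathrm{comb}}\Gamma_1\cdot\Mod^{\mathrm{comb}}\Gamma_2\lesssim1$ is purely combinatorial, reflecting the self-duality of $2$-modulus on the planar nerve of a cover of a topological quadrilateral. Multiplying the two comparisons yields $\Mod\Gamma_1\cdot\Mod\Gamma_2\le\kappa$ with $\kappa=\kappa(C_U)$. (The opposite bound $\Mod\Gamma_1\cdot\Mod\Gamma_2\ge\kappa'$ holds in every $Y$ by \cite{RajRom:19} and is not needed here.)

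The hard part is this last step: arranging the cover of $Q$ by mass-controlled balls of bounded multiplicity when $r_y$ is a priori only positive pointwise (not locally bounded below), and carrying out the continuous-to-combinatorial modulus comparison together with the combinatorial reciprocality estimate while keeping every constant dependent on $C_U$ alone. Once \eqref{equ:reciprocality(1)} and \eqref{equ:reciprocality(3)} are in place, $Y$ is reciprocal by \Cref{def:recip}, which is the assertion of the proposition.
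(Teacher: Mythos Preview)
Your treatment of \eqref{equ:reciprocality(3)} is correct and matches the paper's: the logarithmic test function together with dyadic shells and the small-scale mass bound gives the vanishing.

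For \eqref{equ:reciprocality(1)} your route diverges from the paper's and, as you yourself flag, the ``hard part'' is not actually carried out. Two concrete obstacles make the combinatorial scheme genuinely problematic here. First, a cover of $\overline Q$ by balls $B(y_i,s_i)$ with $s_i<r_{y_i}$ and \emph{bounded multiplicity} is not available in this generality: bounded overlap of a $5r$-cover typically comes from a doubling or lower mass bound, whereas you only have the one-sided upper bound \eqref{equ:upper_ahlfors}. Without bounded overlap the passage from a combinatorially admissible weight to a continuous $\rho$ with controlled $\int\rho^2\,d\mathcal H^2$ breaks down. Second, the asserted combinatorial duality $\Mod^{\mathrm{comb}}\Gamma_1\cdot\Mod^{\mathrm{comb}}\Gamma_2\lesssim 1$ for the nerve of an arbitrary finite cover of a topological quadrilateral is not a standard citable fact; known discrete duality results require additional structure (uniform mesh, planarity of the nerve with degree bounds, etc.) that a cover by balls of wildly varying radii $s_i<r_{y_i}$ need not have.

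The paper avoids both issues by never covering $Q$ globally. It takes the energy minimizer $\rho$ for $\Gamma(\zeta_1,\zeta_3;Q)$ and, for each individual curve $\mathcal C\in\Gamma(\zeta_2,\zeta_4;Q)$, applies the $5r$-covering lemma along $\mathcal C$ with radii small enough that $36r_j<r_{y_j}$. The comparison function $g=\sum_j r_j^{-1}\chi_{6B_j\cap Q}$ is admissible for $\Gamma(\zeta_1,\zeta_3;Q)$, and the variational inequality for the minimizer yields $\Mod\Gamma(\zeta_1,\zeta_3;Q)\le\sum_j r_j^{-1}\int_{6B_j\cap Q}\rho\,d\mathcal H^2$. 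The small-scale bound \eqref{equ:upper_ahlfors} then converts each term into $r_j$ times an infimum of a Hardy--Littlewood maximal function $\mathcal M\rho$ over $\mathcal C\cap B_j$, so the right side is dominated by $C\int_{\mathcal C}\mathcal M\rho\,d\mathcal H^1$. Hence a constant multiple of $\mathcal M\rho$ is admissible for $\Gamma(\zeta_2,\zeta_4;Q)$, and the $L^2$-boundedness of $\mathcal M$ (which needs only the upper mass bound) closes the estimate. The point is that the $B_j$ are \emph{disjoint} along each curve, so no multiplicity control is needed, and the maximal function replaces the combinatorial duality entirely.
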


\begin{proof} 
Condition \eqref{equ:reciprocality(3)} follows by considering the admissible function 
\[\rho(z)=\frac{1}{\log(R/r) d(y,z)}.\] 
To prove \eqref{equ:reciprocality(1)}, we modify the proof of \cite[Proposition 15.5]{Raj:16}. We give the main steps and refer to \cite{Raj:16} for the missing details. 
Let $Q=Q(\zeta_1, \zeta_2, \zeta_3, \zeta_4)$ be a quadrilateral. Then there exists a $\rho$ that is \emph{weakly admissible} (admissible outside an exceptional curve family of zero modulus) for $\Gamma(\zeta_1,\zeta_3;Q)$, such that 
\[\int_Y \rho^2 \, d\mathcal{H}^2 =\Mod \Gamma(\zeta_1, \zeta_3; Q).\]
Fix a curve $\mathcal{C} \in \Gamma(\zeta_2,\zeta_4;Q)$. We may assume that $\mathcal{C}$ is homeomorphic to $[0,1]$ and has finite length. 
Using the basic covering theorem, we find a finite cover $\{5B_j\}=\{B(y_j,5r_j)\}$ of $\mathcal{C}$ such that $y_j \in \mathcal{C}$ and $36r_j<r_y$ for all $j$, and such that the balls 
$B_j$ are pairwise disjoint. Moreover, let $g\colon Q \to [0,\infty]$, 
\begin{equation}
\label{equ:minimi}
g(y)=\sum_j r_j^{-1} \chi_{6B_j\cap Q}(y). 
\end{equation}
Since every $\mathcal{C}'$ in $\Gamma(\zeta_1,\zeta_3;Q)$ intersects at least one of the balls $5B_j$, it follows that $g$ is admissible for $\Gamma(\zeta_1,\zeta_3;Q)$. Moreover, since $\rho$ is a minimizer for $\Mod \Gamma(\zeta_1, \zeta_3; Q)$, applying the weak admissibility of $(1-t)\rho+tg$ and letting $t \to 0$ leads to 
\begin{equation}
\label{equ:uomi}
\Mod \Gamma(\zeta_1,\zeta_3;Q) \leq \int_Q \rho g \, d\mathcal{H}^2 = \sum_j r_j^{-1} \int_{6B_j\cap Q} \rho \, d\mathcal{H}^2. 
\end{equation}
For the maximal function $\mathcal{M}\rho:Q \to [0,\infty$], 
\[
\mathcal{M}\rho(z)=\sup_{r>0} \frac{1}{\mathcal{H}^2(B(z,5r))} \int_{B(z,r)\cap Q} \rho \, d\mathcal{H}^2, 
\]
standard arguments show that  
\begin{equation}
\label{equ:kissapoyta}
\int_Q (\mathcal{M}\rho)^2 \, d\mathcal{H}^2 \leq 8 \int_Q \rho^2 \, d\mathcal{H}^2.  
\end{equation}
Now we apply \eqref{equ:upper_ahlfors} to estimate the right hand term of \eqref{equ:uomi} from above by 
\begin{align*} 
& 1296C_U\sum_j \frac{r_j}{\mathcal{H}^2(B(y_j,36B_j)} \int_{B(y_j,6_j)\cap Q} \rho \, d\mathcal{H}^2 \\ & \leq 1296C_U\sum_j r_j \inf_{y \in \mathcal{C} \cap B_j} \mathcal{M}\rho(y). 
\end{align*} 
Since the right hand term is bounded from above by $1296C_U \int_\mathcal{C} \mathcal{M}\rho \, d\mathcal{H}^1$, we conclude that 
\[
y \mapsto \frac{1296C_U\mathcal{M}\rho(y)}{\Mod \Gamma(\zeta_1,\zeta_3;Q)} 
\]
is admissible for $\Gamma(\zeta_2,\zeta_4;Q)$. Combining the admissibility with \eqref{equ:minimi} and \eqref{equ:kissapoyta}, we have 
\[
\Mod \Gamma(\zeta_2,\zeta_4;Q) \leq \frac{8 \cdot 1296^2 C_U^2}{\Mod \Gamma(\zeta_1,\zeta_3;Q)}, 
\]
from which \eqref{equ:reciprocality(3)} follows. 
\end{proof}

\begin{proof}[Proof of \Cref{thm:mu_quasiconformal}]
	By Lemmas \ref{ahlfors} and \ref{mumass}, the space $(X,q)$ satisfies the assumption of \Cref{prop:reciprocality_criterion}. Thus by \Cref{RtoQC} there exists a QC map $h \colon (X,q) \to \Omega \subset \R^2$. By the Riemann mapping theorem, we can choose $h$ such that $\Omega=\mathbb{D}$ or $\Omega=\R^2$. Moreover, by Lemmas \ref{mumass} and \ref{mulength} the $\mu$-modulus $\Mod_\mu(\Gamma)$ and the conformal 2-modulus $\Mod_2(\Gamma)$ in $X$ are comparable for any curve family $\Gamma$, so $h$ precomposed with the identity map from $(X,d)$ to $(X,q)$ is $\mu$-quasiconformal. 
\end{proof}


\section{Infinitesimally quasisymmetric maps} \label{sec:proof_iqs}

In this section we introduce the notion of infinitesimally quasisymmetric map and apply our results on infinitesimally metric measures to give a characterization for the spaces that admit such a parametrization by a Euclidean planar domain. 

Recall that a homeomorphism $f \colon (X,d) \to (Y,d')$ between metric spaces is \emph{quasisymmetric} (QS) if there exists a homeomorphism $\eta \colon [0,\infty) \to [0, \infty)$ such that 
\begin{equation}
\label{equ:qsu}
\frac{d(x,y)}{d(x,z)} \le t \, \text{ implies } \, \frac{d'(f(x),f(y))}{d'(f(x),f(z))} \le \eta(t) 
\end{equation}
for all distinct points $x, y, z \in X$. Closely related is the following definition. A homeomorphism $f\colon (X,d) \to (Y,d')$ between metric spaces is \emph{metrically quasiconformal} (MQC) if there exists $H \geq 1$ such that 
\[\limsup_{r \to 0} \frac{\sup\{d'(f(x),f(y)): d(x,y) \leq r\}}{\inf\{d'(f(x),f(y)): d(x,y) \geq r\}} \leq H \]
for all $x \in X$. 

\begin{definition} \label{def:qs}
A homeomorphism $f \colon (X,d) \to (Y,d')$ is \emph{infinitesimally quasisymmetric} (I-QS) if there exists a homeomorphism $\eta \colon [0,\infty) \to [0, \infty)$ such that for every $x \in X$ there exists a radius $r_x>0$ such that \eqref{equ:qsu} holds for all $y,z \in B(x,r_x)$. 
\end{definition}

It is a standard exercise to show that if $f\colon X \to Y$ and $g\colon Y \to Z$ are QS, then $g \circ f$ and $f^{-1}$ are also QS. These properties also hold for the class of I-QS maps. Note that both properties may fail for MQC maps, even for metric spaces homeomorphic to $\mathbb{R}^2$. In \Cref{sec:exponential_weight}, we give an example of this.

It is immediate from the definitions that any QS map is I-QS, and any I-QS is MQC. Thus infinitesimal quasisymmetry is an intermediate condition between quasisymmetry and metric quasiconformality. In \Cref{sec:spikes_2}, we give an example of a map which is I-QS but not QS. 

Recall that a metric space $(X,d)$ is \emph{linearly locally connected} (LLC) if there exists $\lambda \geq 1$ such that the following properties hold: \begin{enumerate}
\item For any $x \in X$, $r>0$ and $y,z \in B(x,r)$ there exists a continuum $K \subset B(x,\lambda r)$ with $y,z \in K$. 
\item For any $x \in X$, $r>0$ and $y,z \in X \setminus B(x,r)$ there exists a continuum $K \subset X \setminus B(x,\lambda^{-1}r)$ with $y,z \in K$.
\end{enumerate}
\begin{definition}
\label{def:illc}
A metric space $(X,d)$ is \emph{infinitesimally linearly locally connected} (I-LLC) if there exists $\Lambda \geq 1$ such that for every $x \in X$ there exists a radius $r_x>0$ such that the above properties hold for all $r<r_x$.
\end{definition}

It is easy to see that the LLC property is preserved under QS maps. Similarly, I-QS maps preserve the I-LLC property. 
Since every planar domain is I-LLC, any metric space that admits an I-QS map to such a domain must also be I-LLC. 

Finally, we introduce a modification of the Loewner condition of Heinonen and Koskela \cite{HK:98}. We denote by $\Gamma(A,B)$ the family of curves which join sets $A$ and $B$ in $X$. 
Recall that $X$ (equipped with $\mathcal{H}^2$) is \emph{Loewner} if there exists a decreasing function $\phi\colon (0,\infty) \to (0,\infty)$ such that $\Mod \Gamma(E,F) \geq \phi(t)$ 
for all disjoint nondegenerate continua $E,F$ satisfying 
\begin{equation}
\label{equ:loew}
\frac{\dist(E,F)}{\min\{\diam E, \diam F\}} \leq t. 
\end{equation}
Also, recall our convention that any measure $\mu$ comes equipped with an admissible cover $\mathcal{B}=\{B(x,r): \, 0<r<r_x\}$. 

\begin{definition}
\label{def:iloew}
A metric space $X$ equipped with a measure $\mu$ is \emph{infinitesimally Loewner} (I-Loewner) if there exists a decreasing function $\phi \colon (0,\infty) \to (0,\infty)$ such that $\Mod_\mu \Gamma(E,F) \geq \phi(T)$ for all disjoint continua $E,F$ such that $E$ joins $x$ and $S(x,t)$, $F\supset S(x,r_x)$ joins $S(x,s)$ and $S(x,r_x)$, and $0<s,t<r_x/2$, $s/t \leq T$. 
\end{definition}

It follows from the Loewner property of $\mathbb{R}^2$ that every planar domain, equipped with Lebesgue measure and any admissible cover, 
is I-Loewner. The remainder of this section is dedicated to the proof of \Cref{thm:iqs}. We first restate the theorem. 

\begin{theorem} \label{thm:iqs'}
Let $X$ be a metric space homeomorphic to $\mathbb{R}^2$. There exists an I-QS 
map $f\colon X \to \Omega$, where $\Omega=\mathbb{D}$ or $\Omega=\R^2$, if and only if $X$ is I-LLC
and supports an I-MM $\mu$ such that $(X,\mu)$ is I-Loewner. 
\end{theorem}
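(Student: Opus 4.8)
\textbf{Proof plan for \Cref{thm:iqs'}.}

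The plan is to prove the two implications separately, using \Cref{thm:mu_quasiconformal'} as the engine for the sufficiency direction and $\mu$-modulus estimates in the style of \cite{HK:98} for the necessity direction.

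\emph{Necessity.} Suppose $f\colon X \to \Omega$ is I-QS. First I would show $X$ is I-LLC: since $\Omega$ is a planar domain it is I-LLC (indeed LLC), and the I-LLC property transfers back along the I-QS homeomorphism $f^{-1}$ by a standard local argument, exactly as LLC transfers along QS maps but now only in the small balls $B(x,r_x)$ where \eqref{equ:qsu} is available. Next, let $\mu = f^{-1}_*(\mathcal{L}^2|_\Omega)$ be the pullback of Lebesgue measure, with admissible cover $\mathcal{B} = \{B_d(x,r): r<r_x\}$ where $r_x$ is small enough that $f$ restricted to $B(x,r_x)$ is an $\eta$-quasisymmetric embedding onto its image and $\overline{B}(x,r_x)$ is compact. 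Then $\mu$ has no atoms, is Radon, and is positive on balls. The key computation is that $\mu$ is I-MM: for $B_d(x,r)\in\mathcal{B}$, quasisymmetry of $f$ on this ball shows that $f(B_d(x,r))$ is roughly a round disk in $\Omega$ whose radius $\rho$ satisfies $\rho \approx d'(f(x),f(z)) \approx d'(f(x),f(y))$ for $z\in S_d(x,r)$ and $y\in B_d(x,r/\Lambda)$ (choosing $\Lambda$ from the quasisymmetry data), so $\mu(B_d(x,r))^{1/2} = (\mathcal{L}^2(f(B_d(x,r))))^{1/2} \approx \rho$; and since $q$ is comparable to $d' \circ f$ at small scales (both $\ell_\mu$-length and the Euclidean length of $f$-images of curves agree up to constants once $\mu$ is seen to be the pullback measure, hence $q(y,z)\approx d'(f(y),f(z))$ for $y,z$ close), one gets $\mu(B_d(x,r))^{1/2}\approx q(y,z)$, which is \eqref{equ:quumetric}. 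Finally $(X,\mu)$ is I-Loewner because $\mathbb{R}^2$ is Loewner: the configuration of continua $E,F$ in \Cref{def:iloew} maps under $f$ to a configuration of continua in $\Omega$ with controlled relative distance (the quantitative LLC/QS estimates turn the annular separation $s/t\le T$ into a bound $\dist(fE,fF)/\min\{\diam fE,\diam fF\}\le T'(T)$), and then $\Mod_\mu\Gamma(E,F) \approx \Mod f\Gamma(E,F) \geq \phi_{\mathbb{R}^2}(T')$ using that $f$ is $\mu$-quasiconformal (quasisymmetric maps into $\Omega$ are $\mu$-quasiconformal for $\mu$ the pullback of Lebesgue, as noted in the introduction).

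\emph{Sufficiency.} Suppose $X$ is I-LLC and supports an I-MM $\mu$ with $(X,\mu)$ I-Loewner. By \Cref{thm:mu_quasiconformal'} there is a $\mu$-quasiconformal map $f\colon X\to\Omega$, $\Omega=\mathbb{D}$ or $\R^2$. I claim $f$ is I-QS. The strategy is the classical one (as in Heinonen--Koskela and in \cite{Raj:16}): combine upper and lower modulus bounds to control eccentricities of images of balls, then upgrade to local quasisymmetry. Fix $x$ and work at scales below a suitable $r_x$ adapted to all the infinitesimal hypotheses. The upper bound on how much $f$ can distort comes from the I-LLC property of $X$ together with the upper mass bound (\Cref{ahlfors}, \Cref{mumass}): if $f$ mapped a small $d$-ball to a set with huge eccentricity, one produces a curve family in $X$ with small $\mu$-modulus whose image has large modulus, or vice versa, contradicting $\mu$-quasiconformality. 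The lower bound — preventing $f$ from collapsing — is where I-Loewner enters: given two continua realizing a small relative distance inside a small ball, the I-Loewner lower bound on $\Mod_\mu$ forces, via $\mu$-quasiconformality, a lower bound on the conformal modulus of the image family, which in $\Omega\subset\R^2$ forces the images not to be too close relative to their sizes; this is precisely the mechanism that converts a two-sided modulus comparison plus a Loewner-type lower bound into a quasisymmetry estimate. Running the standard argument (e.g. the proof that a QC map between Loewner/LLC spaces is QS, localized) then yields a distortion function $\eta$, depending only on $K$, the I-LLC constant $\Lambda$, the I-MM constant $C_i$, and the function $\phi$, such that \eqref{equ:qsu} holds for $y,z$ in a small ball about $x$. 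Hence $f$ is I-QS.

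\emph{Main obstacle.} The delicate point is the bookkeeping of the many radii: each of I-LLC, I-MM, I-Loewner, and $\mu$-quasiconformality comes with its own "below scale $r_x$" qualifier, and the metrics $d$ and $q$ only agree at infinitesimal scales, so one must carefully extract a single radius $r_x$ (depending only on $x$) on which all the quantitative estimates simultaneously apply, and verify that the quasisymmetry constant $\eta$ produced is genuinely independent of $x$. In particular, the conversion "$q(y,z)\approx d'(f(y),f(z))$ at small scales" needs the comparison between $\ell_\mu$-length of curves and Euclidean length of their $f$-images (from Lemmas \ref{mumass}, \ref{mulength} and $\mu$-quasiconformality), and making the annular configurations in \Cref{def:iloew} match the standard continuum configurations in the definition of quasisymmetry requires the I-LLC property to connect points to spheres by continua of controlled size — this interplay is the technical heart of the argument.
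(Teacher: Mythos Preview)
Your proposal is correct and follows essentially the same approach as the paper: for necessity you pull back Lebesgue measure along the I-QS map, use the $\ell_\mu$-length vs.\ Euclidean-length-of-image comparison (the paper's \Cref{mulength2}) to get both I-MM and $\mu$-quasiconformality, then transfer the Loewner property of $\mathbb{R}^2$; for sufficiency you apply \Cref{thm:mu_quasiconformal'} and then run the localized Heinonen--Koskela argument (the paper's \Cref{keqaa}) with I-LLC providing the continua and I-MM/I-Loewner providing the two-sided modulus bounds. Your identification of the main obstacle --- harmonizing the various radii $r_x$ and making the $\eta$ uniform in $x$ --- is exactly the technical core the paper handles in the proofs of \Cref{baqaa} and \Cref{keqaa}.
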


To prove the theorem, we first show in \Cref{mulength2} and \Cref{baqaa} that if $f\colon X \to \Omega$ is I-QS, then the pullback of Lebesgue measure satisfies the conditions of the theorem (we already noticed that the existence of $f$ forces $X$ to be I-LLC). For the other direction, we show in \Cref{keqaa} that $\mu$-quasiconformal maps $X \to \Omega \subset \R^2$, such as the map in \Cref{thm:mu_quasiconformal}, are I-QS under these conditions. \Cref{keqaa} can be seen as an infinitesimal analog of \cite[Theorem 4.7]{HK:98}, and it is proved using similar arguments.  

\begin{lemma} \label{mulength2}
	
	Let $f \colon X \to \Omega \subset \R^2$ be an I-QS map, and $\mu = f^*\mathcal{L}_2$ the pullback measure of the Lebesgue measure $\mathcal{L}_2$. Equip $\mu$ with admissible cover $\mathcal{B}=\{B(x,r): \, 0<r<r_x\}$, where the $r_x$ are the radii in \Cref{def:qs} of I-QS maps. Then 
	\[\eta(1)^{-1} \Ha^1(f(\mathcal C)) \le \ell_\mu(\mathcal C) \le 4\eta(5) \Ha^1(f(\mathcal C))\]
	for any curve $\mathcal C \subset X$.
	
\end{lemma}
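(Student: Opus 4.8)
The statement compares the $\mu$-length measure $\ell_\mu$ of a curve $\mathcal{C}$ with the Euclidean $1$-dimensional Hausdorff measure of its image $f(\mathcal{C})$, where $\mu = f^*\mathcal{L}_2$ and $f$ is I-QS with the admissible cover built from the I-QS radii $r_x$. Since $\ell_\mu$ is defined by a Carath\'eodory construction with weight $\varphi(B) = 2\pi^{-1/2}\mu(B)^{1/2} = 2\pi^{-1/2}\mathcal{L}_2(f(B))^{1/2}$, the whole proof is a matter of relating, for a small ball $B = B(x,r) \in \mathcal{B}$, the quantity $\mathcal{L}_2(f(B))^{1/2}$ to the Euclidean diameter of $f(B)$, and then summing over an efficient cover. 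The key geometric input is that the I-QS condition controls the shape of $f(B(x,r))$ for $r$ below the threshold: comparing ratios $d(x,y)/d(x,z)$ for $y,z \in B(x,r_x)$ forces $f(B(x,r))$ to be roundish in the image. Concretely, if $y,z \in \overline{B}(x,r)$ then $|f(x)-f(y)| \le \eta(1)\,|f(x)-f(z)|$ (once one also uses points on the sphere), so $f(B(x,r))$ is squeezed between two Euclidean balls centered at $f(x)$ with comparable radii, the ratio controlled by $\eta(1)$; hence $\operatorname{diam} f(B) \le 2\max_{y}|f(x)-f(y)|$ and $\mathcal{L}_2(f(B))^{1/2} \ge \pi^{1/2}\eta(1)^{-1}\max_y|f(x)-f(y)| \ge \tfrac{1}{2}\pi^{1/2}\eta(1)^{-1}\operatorname{diam} f(B)$, and in the other direction $\mathcal{L}_2(f(B))^{1/2} \le \pi^{1/2} \operatorname{diam} f(B)$ trivially, with the factor $5$ and $\eta(5)$ entering when one passes from $B$ to $5B$ as is done in covering arguments.

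\textbf{Lower bound $\eta(1)^{-1}\Ha^1(f(\mathcal C)) \le \ell_\mu(\mathcal C)$.} Here I would argue that any cover of $\mathcal{C}$ by sets $S_j$ of small $d$-diameter, which we may take to be balls $B_j = B(x_j, r_j) \in \mathcal{B}$ after enlarging, pushes forward to a cover $\{f(B_j)\}$ of $f(\mathcal{C})$; then
\[
\sum_j \operatorname{diam}_{\R^2} f(B_j) \;\le\; \eta(1) \sum_j 2\pi^{-1/2}\mathcal{L}_2(f(B_j))^{1/2} \;=\; \eta(1)\sum_j \varphi(B_j),
\]
using the roundness estimate above (some care: one must first replace the $S_j$ by balls centered on $\mathcal{C}$ of comparable diameter, which only costs a bounded multiplicative factor absorbed into the constant, or arrange this via the basic covering theorem). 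Taking the infimum over covers and letting the diameter bound go to $0$ gives $\Ha^1_{\R^2}(f(\mathcal{C})) \le \eta(1)\,\ell_\mu(\mathcal{C})$. One subtlety is the normalization constant: $\varphi(B) = 2\pi^{-1/2}\mu(B)^{1/2}$ and the weight defining $\Ha^1$ on $\R^2$ is $\operatorname{diam}$, so the factors of $\pi$ and $2$ must be tracked exactly to land on the clean constant $\eta(1)$; I would write $\mathcal{L}_2(f(B)) \le \pi(\operatorname{diam}_{\R^2} f(B)/2)^2 \cdot (\text{roundness factor})$ isn't needed for this direction — only $\operatorname{diam} f(B) \le \eta(1)\cdot 2\pi^{-1/2}\mathcal{L}_2(f(B))^{1/2}$ is, which follows from $f(B)$ containing a definite-proportion ball.

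\textbf{Upper bound $\ell_\mu(\mathcal C) \le 4\eta(5)\Ha^1(f(\mathcal C))$.} For this direction I would start from an efficient cover of $f(\mathcal{C})$ in $\R^2$, or rather cover $\mathcal{C}$ directly: given $\delta>0$, choose balls $B_j = B(x_j,r_j) \in \mathcal{B}$ centered on $\mathcal{C}$, of $d$-diameter $\le \delta$, covering $\mathcal{C}$, with $\{B_j\}$ controlled by a basic-covering-theorem selection so that $\{5B_j\}$ covers while $\{B_j\}$ are disjoint — this is exactly where the $5$ (and the value $\eta(5)$, since $5B_j \subset B(x_j, r_{x_j})$ requires comparing points at relative distance up to $5$) comes from. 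Then $\ell_{\mu,\delta}(\mathcal{C}) \le \sum_j 2\pi^{-1/2}\mathcal{L}_2(f(5B_j))^{1/2}$, and by the I-QS roundness of $f(5B_j)$ one bounds $\mathcal{L}_2(f(5B_j))^{1/2} \le \pi^{1/2}\eta(5)\cdot(\text{something})\cdot\operatorname{diam}_{\R^2}f(B_j)$; since the $f(B_j)$ have pairwise disjoint interiors and each meets $f(\mathcal{C})$, the sum $\sum_j \operatorname{diam}_{\R^2} f(B_j)$ is comparable to $\Ha^1_{\R^2}(f(\mathcal{C}))$ up to the covering constant, yielding the bound with constant $4\eta(5)$ after the normalization bookkeeping.

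\textbf{Main obstacle.} The genuinely delicate point is establishing the two-sided roundness of $f(B(x,r))$ for $r$ small, i.e.\ that the I-QS hypothesis — which only compares ratios of distances among points in $B(x,r_x)$, not globally — still forces $f(B(x,r))$ to be sandwiched between concentric Euclidean balls with ratio depending only on $\eta$, and to do so with the exact constants ($\eta(1)$ on one side, $4\eta(5)$ on the other) rather than some unspecified $C(\eta)$. This requires choosing the comparison points $y \in \overline{B}(x,r)$ and $z \in S(x,r)$ (or $z$ just outside) judiciously so that $d(x,y)/d(x,z) \le 1$ resp.\ $\le 5$ exactly, invoking \eqref{equ:qsu} on the nose, and then converting ``$f(B)$ contains/omits a ball of radius $\rho$'' into the Lebesgue-measure and diameter estimates with the factor $2$ and $\pi$ placed correctly. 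The covering-theorem step and the passage $\ell_{\mu,\delta} \to \ell_\mu$, $\Ha^1_{\R^2,\delta}\to\Ha^1_{\R^2}$ are routine once the ball-by-ball comparison is pinned down. I would also double-check that $f$ and $f^{-1}$ being continuous guarantees that covers of $\mathcal{C}$ by small $d$-balls correspond to covers of $f(\mathcal{C})$ by small Euclidean sets and vice versa, so that the two Carath\'eodory limits can be taken compatibly.
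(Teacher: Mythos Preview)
Your lower bound argument is essentially the paper's: cover $\mathcal C$ by balls $B_j\in\mathcal B$, use I-QS roundness to get $\diam f(B_j)\le 2\pi^{-1/2}\eta(1)\,\mu(B_j)^{1/2}$, and pass to the limit. That direction is fine.

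The upper bound has a genuine gap. The assertion ``since the $f(B_j)$ have pairwise disjoint interiors and each meets $f(\mathcal C)$, the sum $\sum_j\diam_{\R^2}f(B_j)$ is comparable to $\Ha^1_{\R^2}(f(\mathcal C))$'' is false as stated: disjointness alone gives no such bound (imagine long thin sets meeting a short curve). What \emph{does} work is the inradius $T_j=\sup\{t:B(f(x_j),t)\subset f(B_j)\}$, not the diameter. The paper's argument is this: order the centers $x_j$ monotonically along the simple curve $\mathcal C$; since the $B_j$ are disjoint, $f(x_{j+1})\notin f(B_j)$, hence $T_j\le|f(x_j)-f(x_{j+1})|$; and because the $f(x_j)$ are ordered along $f(\mathcal C)$, the subarcs between consecutive $f(x_j)$ are nonoverlapping, giving $\sum_j|f(x_j)-f(x_{j+1})|\le\Ha^1(f(\mathcal C))$. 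Combined with the roundness estimate $\mu(5B_j)\le 4\pi\eta(5)^2T_j^2$ (which uses a point $z_j$ with $d(x_j,z_j)\ge r_j$ and $|f(x_j)-f(z_j)|\le 2T_j$, then I-QS with ratio $5$), this yields exactly $4\eta(5)$. Your route via $\diam f(B_j)$ would, even after supplying the missing roundness argument for $f(B_j)$, pick up an extra factor of $\eta(1)$ and give $4\eta(1)\eta(5)$ rather than $4\eta(5)$.

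So the ``main obstacle'' is not the ball-by-ball roundness estimate you flag (that part is routine once you pick the right comparison points), but rather the passage from $\sum_j\mu(5B_j)^{1/2}$ to $\Ha^1(f(\mathcal C))$: you need the ordering trick, or equivalently the observation that the curve must traverse distance $\ge T_j$ inside each disjoint $f(B_j)$, and in either case you must work with $T_j$, not $\diam f(B_j)$.
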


\begin{proof}
	
	We may assume that the curve $\mathcal C$ is simple and compact. As in \Cref{mumass}, it suffices to prove the claim for sets $\mathcal{C}$ for which there exists 
	$\delta>0$ such that the set of points $x$ satisfying $B(x,\delta) \in \mathcal B$ is dense in $\mathcal C$.
	
	Fix such a $\delta$ and a sequence $(B_j)=(B(x_j,r_j))$ of disjoint balls such that $x_j \in \mathcal C$, $5B_j \in \mathcal B$, $5r_j < \delta$ and $\mathcal C \subset \cup_j 5B_j$, ordered so that if $\gamma$ is any injective parametrization of $\mathcal C$ then $s_j=\gamma^{-1}(x_j)$ is a monotone sequence.
	
	Let
	\[T_j=\sup \{ t>0 : B(f(x_j),t) \subset f(B_j) \}\]
	for every $j$. Then there exists $z_j \in X$ with $d(x_j,z_j) \ge r_j$ and $\abs{f(x_j)-f(z_j)} \le 2 T_j$. Using the infinitesimal quasisymmetry of $f$ we find that for any $y_j \in 5B_j$
	\[ \abs{f(x_j)-f(y_j)} \le \eta(5) \abs{f(x_j)-f(z_j)}\]
	so that $f(5B_j) \subset B(f(x),2\eta(5)T_j)$. By the choice of $T_j$ also $B(f(x),T_j) \subset f(B_j)$ and thus 
	\[\mu(5B_j)= \mathcal{L}_2(f(5B_j)) \le 4\pi \eta(5)^2T_j^2 \le 4\pi \eta(5)^2\abs{f(x_j)-f(x_k)}^2\] for all $k \ne j$ as the balls $B_j$ are disjoint. Now the $\delta$-content $\ell_{\mu,\delta}$ satisfies 
	\[\ell_{\mu,\delta}(\mathcal C) \le 2\pi^{-1/2}\sum_j \mu(5B_j)^{1/2} \le  4\eta(5)\sum_j \abs{f(x_j)-f(x_{j+1})}.\]
	Since $f(\mathcal C)$ is the nonoverlapping union of the subcurves connecting $f(x_j)$ and $f(x_{j+1})$, we have $\ell_{\mu,\delta}(\mathcal C) \le 4\eta(5) \Ha^1(f(\mathcal C))$ for any $\delta >0$ and thus $\ell_\mu(\mathcal C) \le 4\eta(5) \Ha^1(f(\mathcal C)).$
	
	To prove the other inequality, fix $\varepsilon>0$ and let $B_j=B(x_j,r_j)$ be a sequence of balls in $\mathcal B$ covering $\mathcal C$ with $\diam B_j < \sigma$ and $B_j \cap \mathcal C \ne \emptyset$ for all $j$ and some $\sigma >0$. Since $X$ is locally compact and $\mathcal C$ is compact, $\diam f(B_j) < \varepsilon$ for all $j$ when $\sigma $ is sufficiently small.
	
	By the infinitesimal quasisymmetry of $f$ we have 
	\[\diam f(B_j)^2 \le 4\pi^{-1}\eta(1)^2 \mathcal{L}_2(f(B_j))=4\pi^{-1}\eta(1)^2 \mu(B_j)\] for every $j$, and hence
	\[ \Ha^1_\varepsilon(f(\mathcal C)) \le  2\pi^{-1/2}\eta(1)\sum_j  \mu(B_j)^{1/2}.\]
	Thus $\Ha^1_\varepsilon(f(\mathcal C)) \le \eta(1) \ell_{\mu,\sigma}(\mathcal C) \le \eta(1)\ell_\mu(\mathcal C)$, and the same upper bound holds for $\Ha^1$ since $\varepsilon$ was arbitrary.
\end{proof}

\begin{corollary} \label{iqsismuqc}
Let $f$ and $\mu$ be as in \Cref{mulength2}. Then $f$ is \newline $\mu$-quasiconformal.
\end{corollary}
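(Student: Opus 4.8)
The plan is to show that the $\mu$-modulus $\Mod_\mu\Gamma$ and the conformal modulus $\Mod f\Gamma$ of the image family are comparable, with constants depending only on the function $\eta$ from the definition of the I-QS map $f$. The key tool is \Cref{mulength2}, which says that for every curve $\mathcal C\subset X$ the $\mu$-length $\ell_\mu(\mathcal C)$ and the Euclidean length $\Ha^1(f(\mathcal C))$ of its image are comparable, with constants $\eta(1)^{-1}$ and $4\eta(5)$. The second ingredient is the relation between $\mu$ and the Lebesgue measure of the image: since $\mu=f^*\mathcal L_2$, we have $\int_X g\,d\mu=\int_\Omega (g\circ f^{-1})\,d\mathcal L_2$ for any Borel $g\ge 0$, which is exactly the change-of-variables identity built into the pullback measure.

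First I would take an admissible function $\rho\in\Phi_\mu(\Gamma)$ and transport it to $\Omega$ by setting $\tilde\rho = c\,(\rho\circ f^{-1})$ on $\Omega$, where $c$ is the comparability constant from \Cref{mulength2}; one checks that $\tilde\rho$ is admissible for $f\Gamma$ in $\R^2$, because for any rectifiable image curve $f(\mathcal C)$ one has $\int_{f(\mathcal C)}\tilde\rho\,\d\Ha^1 \gtrsim \int_{\mathcal C}\rho\,\d\ell_\mu\ge 1$, the first inequality following from \Cref{mulength2} applied subcurve by subcurve (here one must note that $\mathcal C$ has locally finite $\ell_\mu$-measure exactly when $f(\mathcal C)$ has locally finite $\Ha^1$-measure, again by \Cref{mulength2}). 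Then by the change of variables, $\int_\Omega\tilde\rho^2\,\d\mathcal L_2 = c^2\int_X\rho^2\,\d\mu$, so $\Mod f\Gamma \le c^2\Mod_\mu\Gamma$. Taking the infimum over $\rho$ gives one of the two inequalities; the reverse inequality is symmetric, starting from an admissible $\rho'$ for $f\Gamma$ in $\R^2$ and pulling it back to $X$ via $\rho=c'\,(\rho'\circ f)$, using the other estimate in \Cref{mulength2}.

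The one technical point to handle carefully is the correspondence of exceptional (non-rectifiable) curves under $f$: admissibility only constrains curves of locally finite length, so I must verify that the family of curves $\mathcal C\in\Gamma$ with $\ell_\mu(\mathcal C)<\infty$ locally maps exactly onto the family of curves in $f\Gamma$ with locally finite $\Ha^1$-measure. This is immediate from the two-sided bound in \Cref{mulength2}: $\ell_\mu$ of a curve is finite precisely when $\Ha^1$ of its image is, and conversely. With this observed, both admissibility transfers go through and the proof is complete. I do not expect any serious obstacle here — the corollary is essentially a repackaging of \Cref{mulength2} together with the defining property of the pullback measure, and the argument parallels the last step of the proof of \Cref{thm:mu_quasiconformal}.
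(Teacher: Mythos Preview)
Your proposal is correct and follows essentially the same approach as the paper: transport admissible functions via $\tilde\rho=\rho\circ f^{-1}$ (scaled by the constant from \Cref{mulength2}), use \Cref{mulength2} to verify admissibility, and use the defining change-of-variables for the pullback measure $\mu=f^*\mathcal L_2$ to compare the integrals. You are even slightly more careful than the paper in explicitly noting that the correspondence between curves of locally finite $\ell_\mu$-measure and curves of locally finite $\Ha^1$-measure under $f$ follows from the two-sided bound in \Cref{mulength2}.
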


\begin{proof}
Let $\Gamma$ be a curve family in $X$ and $\e>0$. We choose a $\mu$-admissible function $\rho$ with $\int_X \rho^2 \, d \mu \le  \modd_\mu(\Gamma)+\e$ and define $\tilde \rho = \rho \circ f^{-1}$ in $\Omega$. If a curve $\mathcal C \in \Gamma$ has locally finite $\ell_\mu$-measure, then by \Cref{mulength2} and a change of variables
$$ \int_{f(\mathcal C)} \tilde \rho \, d \Ha^1 \ge \frac1{4\eta(5)} \int_{\mathcal C} \rho \, d \ell_\mu ,$$
so that $ 4\eta(5) \tilde \rho$ is admissible for $f(\Gamma)$. Thus using the definition of $\mu$ and a change of variables we have
$$ 
\modd(f(\Gamma)) \le 16\eta(5)^2 \int_\Omega \tilde \rho^2 \, d \mathcal L_2 = 16\eta(5)^2 \int_X \rho^2 \, d \mu \le 16\eta(5)^2 \left( \modd_\mu(\Gamma)+\e \right). 
$$
The other direction can be proved similarly using the other inequality of \Cref{mulength2}.
\end{proof}

\begin{proposition} \label{baqaa}
	Let $f$, $\mu$ and $\mathcal{B}$ be as in \Cref{mulength2}. Then $\mu$ is I-MM and satisfies the I-Loewner condition.  	
\end{proposition}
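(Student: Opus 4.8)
The plan is to verify the two conditions separately, in both cases reducing everything to the quasisymmetry-type control on $f$ and the geometry of $\R^2$. For the I-MM condition, fix a ball $B_d(x,r)\in\mathcal B$, a point $y\in B_d(x,r/\Lambda)$ and a point $z\in S_d(x,r)$; I must produce $\Lambda$ and $C_i$ so that $C_i^{-1}q(y,z)\le \mu(B_d(x,r))^{1/2}\le C_i q(y,z)$. The key observation is that $\mu(B_d(x,r))=\mathcal L_2(f(B_d(x,r)))$, and since $\Omega\subset\R^2$ and $f$ is I-QS with control function $\eta$ on the scale $r_x$, the image $f(B_d(x,r))$ is a topological disk trapped between two concentric Euclidean balls $B(f(x),\rho_-)$ and $B(f(x),\rho_+)$ whose radii are comparable up to a constant depending only on $\eta$ (this is the standard fact that I-QS images of metric balls are ``roundish''; it is exactly the mechanism already used in \Cref{mulength2} to pass between $T_j$ and $\eta(5)T_j$). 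Hence $\mu(B_d(x,r))^{1/2}$ is comparable to $\rho_+\approx\rho_-\approx |f(x)-f(w)|$ for $w\in S_d(x,r)$. On the other hand, $q(y,z)$ — the infimal $\ell_\mu$-length of curves joining $y$ and $z$ — is comparable, via \Cref{mulength2}, to the infimal Euclidean length $\mathcal H^1$ of curves in $\Omega$ joining $f(y)$ and $f(z)$, which (since $\Omega$ need not be convex, one passes to a slightly larger scale where $\Omega$ contains the relevant Euclidean ball, using local compactness as in the proof of \Cref{mulength2}) is comparable to $|f(y)-f(z)|$. Finally $|f(y)-f(z)|\approx |f(x)-f(z)|$: here one uses I-QS with $\Lambda$ chosen large enough that $d(x,y)/d(x,z)=d(x,y)/r\le 1/\Lambda$ forces $|f(x)-f(y)|/|f(x)-f(z)|\le \eta(1/\Lambda)$ small, so that $f(y)$ is much closer to $f(x)$ than $f(z)$ is, and the triangle inequality gives $|f(y)-f(z)|\approx |f(x)-f(z)|\approx\mu(B_d(x,r))^{1/2}$. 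Chaining these comparisons yields \eqref{equ:quumetric} with $C_i$ and $\Lambda$ depending only on $\eta$.

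For the I-Loewner condition, I would transport the classical Loewner property of $\R^2$ (equipped with Lebesgue measure) through $f$. Fix $x\in X$, radii $0<s,t<r_x/2$ with $s/t\le T$, and continua $E,F$ as in \Cref{def:iloew}: $E$ joins $x$ to $S_d(x,t)$, and $F\supset S_d(x,r_x)$ joins $S_d(x,s)$ to $S_d(x,r_x)$. By \Cref{iqsismuqc}, $f$ is $\mu$-quasiconformal, so $\Mod_\mu\Gamma(E,F)\ge K^{-1}\Mod\Gamma(f(E),f(F))$ where $K$ is the $\mu$-QC constant; thus it suffices to bound $\Mod\Gamma(f(E),f(F))$ from below in $\R^2$ by a decreasing function of $T$. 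For this I apply the genuine Loewner estimate in $\R^2$: I need $f(E)$ and $f(F)$ to be disjoint nondegenerate continua with $\dist(f(E),f(F))/\min\{\diam f(E),\diam f(F)\}$ bounded above by a function of $T$. Disjointness is clear from injectivity; nondegeneracy and the diameter bounds come from the fact that $f(E)$ reaches from $f(x)$ out to $f(S_d(x,t))$, which by the roundness estimate above lies near the Euclidean sphere of radius $\approx\mu(B_d(x,t))^{1/2}$ about $f(x)$, so $\diam f(E)\gtrsim \mu(B_d(x,t))^{1/2}$, and similarly $f(F)$ straddles the Euclidean annulus between radii $\approx\mu(B_d(x,s))^{1/2}$ and $\approx\mu(B_d(x,r_x))^{1/2}$, giving $\diam f(F)\gtrsim\mu(B_d(x,r_x))^{1/2}-C\mu(B_d(x,s))^{1/2}$; meanwhile $\dist(f(E),f(F))\lesssim\mu(B_d(x,s))^{1/2}$ since both sets meet the sphere of radius $\approx\mu(B_d(x,s))^{1/2}$. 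Using the I-MM inequality \eqref{equ:quumetric} just proved to compare $\mu(B_d(x,s))^{1/2}$ and $\mu(B_d(x,t))^{1/2}$ with $q$-distances of order $s$ and $t$ respectively (and hence with each other up to the factor $T$ and the constants $C_i,\Lambda$), one gets $\dist(f(E),f(F))/\min\{\diam f(E),\diam f(F)\}\le\psi(T)$ for some increasing $\psi$ depending only on $\eta$. Then the Loewner function of $\R^2$ evaluated at $\psi(T)$, divided by $K$, serves as the required $\phi(T)$.

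The main obstacle I anticipate is the careful bookkeeping in the roundness/comparability step — showing that $f(B_d(x,r))$ is squeezed between comparable concentric Euclidean balls and, crucially, that one can work at a scale where the (possibly non-convex) target domain $\Omega$ actually contains the relevant Euclidean balls so that Euclidean distances, $\mathcal H^1$-geodesic distances, and the pseudometric $q$ are all mutually comparable. This is exactly the kind of local-compactness-plus-exhaustion argument carried out in \Cref{mulength2}, so it should go through, but it requires some care in choosing the radii $r_x$ (intersecting the I-QS radii with the admissible-cover radii and with radii small enough that the $q$-ball of the relevant radius sits inside the prescribed $d$-ball, as in \Cref{ahlfors}). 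Everything else is a routine combination of \Cref{mulength2}, \Cref{iqsismuqc}, the definition of I-QS, and the Loewner property of the plane.
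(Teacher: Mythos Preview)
Your proposal is essentially correct and follows the same route as the paper: for I-MM you use roundness of I-QS images together with \Cref{mulength2} and a choice of $\Lambda$ with $\eta(1/\Lambda)$ small, and for I-Loewner you push through the $\mu$-quasiconformality of $f$ (your citation of \Cref{iqsismuqc} is in fact cleaner than the paper's reference to \Cref{thm:mu_quasiconformal}) and invoke the Loewner property of $\R^2$.

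There is one slip in the I-Loewner step. You propose to bound the relative distance of $f(E),f(F)$ by comparing $\mu(B_d(x,s))^{1/2}$ and $\mu(B_d(x,t))^{1/2}$ via the I-MM condition ``with $q$-distances of order $s$ and $t$ respectively (and hence with each other up to the factor $T$)''. This is not right as stated: the metrics $q$ and $d$ are only topologically equivalent, so $q$-distances to $S_d(x,s)$ and $S_d(x,t)$ are \emph{not} of order $s$ and $t$, and I-MM alone gives you no control on $\mu(B_d(x,s))^{1/2}/\mu(B_d(x,t))^{1/2}$ in terms of $s/t$. The paper avoids this detour entirely: pick $y\in F\cap S_d(x,s)$ and $z\in E\cap S_d(x,t)$ and apply I-QS directly to get
\[
\frac{\dist(fE,fF)}{\diam fE}\le\frac{|f(x)-f(y)|}{|f(x)-f(z)|}\le\eta(s/t),
\]
and use that $F\supset S_d(x,r_x)$, so $f(S_d(x,r_x))$ surrounds $f(x)\in fE$, whence $\dist(fE,fF)\le\diam fF$. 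This gives the relative-distance bound $\max\{\eta(s/t),1\}$ with no reference to $\mu$ or $q$ at all. Your argument can be repaired by unpacking $q$ back to Euclidean distances via \Cref{mulength2} and then applying I-QS, but that is circuitous; the direct application of I-QS is both simpler and what you actually need.
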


\begin{proof}
	
	Let $\Lambda >1$ be large enough so that $\eta(1/\Lambda) \le \frac12$. Fix $x \in X$ and $0<r<r_x/2$ so that $\overline{B}(f(x),\diam f(B(x,r))) \subset \Omega$. In order to prove the I-MM condition \eqref{equ:quumetric}, fix $y \in B(x,r/\Lambda)$ and $z \in S(x,r)$. Then the segment $[f(y),f(z)]$ is contained in 
	$\Omega$. Let $\mathcal C=f^{-1}([f(y),f(z)])$, which is a curve connecting $y$ and $z$.
	
	Now let 
	\[ T= \sup \{ t>0: B(f(x),t) \subset f(B(x,r)) \}.\] 
	Using \Cref{mulength2} and infinitesimal quasisymmetry, we have
	\begin{align*}
	\ell_\mu(\mathcal C) &\le 4\eta(5) \mathcal{H}^1(f(\mathcal C)) = 4\eta(5)\abs{f(y)-f(z)} \le 4\eta(5) \diam fB(x,r) \\ 
	&\le 8\eta(1)\eta(5) T \leq \frac{8\eta(1)\eta(5)}{\sqrt{\pi}} \mathcal{L}_2(f(B(x,r)))^{1/2} \\ & = \frac{8\eta(1)\eta(5)}{\sqrt{\pi}} \mu(B(x,r))^{1/2},
	\end{align*}
	so the first inequality in \eqref{equ:quumetric} holds. 
	
	For the reverse inequality, notice first that our choice of $\Lambda$ implies that $\abs{f(x)-f(y)}\le \frac12 \abs{f(x)-f(z)}$ and thus $\abs{f(y)-f(z)}\ge \frac12 \abs{f(x)-f(z)}$. Let $\mathcal C$ be any curve connecting $y$ and $z$. Now by \Cref{mulength2}
	\begin{align*}
	\ell_\mu(\mathcal C) & \ge \eta(1)^{-1} \mathcal{H}^1(f(\mathcal C)) \\
	& \ge \eta(1)^{-1} \abs{f(y)-f(z)}  \ge \frac{1}{2\eta(1)} \abs{f(x)-f(z)} \\
	& \ge \frac{1}{2\sqrt{\pi}\eta(1)^2} \mathcal{L}_2(f(B(x,r)))^{1/2} = \frac{1}{2\sqrt{\pi}\eta(1)^2} \mu(B(x,r))^{1/2},
	\end{align*}
	since $f(B(x,r)) \subset B(f(x),\eta(1)\abs{f(x)-f(z)})$. Hence also the second inequality in \eqref{equ:quumetric} holds. We conclude that $\mu$ is I-MM.
	
	Finally, we show the I-Loewner condition. Fix $x \in X$ and disjoint continua $E$ and $F$ as in \Cref{def:iloew}, so that there are 
	$y \in F \cap S(x,s)$ and $z \in E \cap S(x,t)$. By infinitesimal quasisymmetry, 
	\[\frac{\dist(fE,fF)}{\diam E}\leq \frac{\abs{f(y)-f(x)}}{\abs{f(z)-f(x)}}\leq \eta(s/t).\] 
	By definition, $F$ contains $S(x,r_x)$. In particular, $fS(x,r_x)$ surrounds $f(x)$, and we have $\dist(fE,fF) \leq \diam fF$. Combining the estimates yields  
	\[
	\frac{\dist(fE,fF)}{\min\{ \diam E, \diam F\}} \leq \max\{\eta(s/t),1\}. 
	\]
	Since $\mathbb{R}^2$ is Loewner, there is $\phi'$ such that 
	\[
	\Mod \Gamma(fE,fF) \geq \phi'(\max\{\eta(s/t),1\}). 
	\]
	On the other hand $f$ is $\mu$-quasiconformal by \Cref{thm:mu_quasiconformal}, so 
	\[
	\Mod_\mu \Gamma(E,F) \geq K^{-1}\Mod \Gamma(fE,fF) 
	\]
	for some $K \geq 1$. We conclude that the I-Loewner condition holds with $\phi(T)=K^{-1}\phi'(\max\{\eta(T),1\})$. 
	\end{proof}

\begin{proposition} \label{keqaa}
Let $\mu$ be an I-MM on $X$, and $f \colon X \to \Omega$ a $\mu$-quasi\-con\-for\-mal homeomorphism. Suppose that $X$ is I-LLC and $\mu$ satisfies the I-Loewner condition. Then $f$ is I-QS.
\end{proposition}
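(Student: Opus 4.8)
The plan is to establish the I-QS inequality \eqref{equ:qsu} locally, at a scale dictated by the admissible cover of $\mu$ and the I-LLC radii, using the $\mu$-modulus as the bridge between the geometry of $X$ and that of $\Omega$. The standard reference point is the proof of \cite[Theorem 4.7]{HK:98}: there one shows that a quasiconformal map between Loewner spaces is quasisymmetric by sandwiching $d'(f(x),f(y))/d'(f(x),f(z))$ between two quantities controlled by modulus. Here the same scheme runs, but every ingredient (Loewner lower bound, LLC-type connectivity, mass bounds) is only available infinitesimally, so all constructed continua and curve families must be confined to a ball $B_d(x,r_x)$ small enough that \emph{all} the relevant hypotheses apply simultaneously.

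First I would fix $x\in X$ and choose $r_x>0$ smaller than: the I-LLC radius at $x$, the radius from the admissible cover $\mathcal B$, and a radius ensuring the I-Loewner and I-MM conclusions hold near $x$; I also use that $f$ maps a neighborhood of $x$ into $\Omega$ with controlled image. Given distinct $y,z\in B_d(x,r_x/\lambda)$ (for a suitable structural constant $\lambda$ absorbing the I-LLC dilation) with $d(x,y)\le t\,d(x,z)$, set $s=d(x,y)$ and $r=d(x,z)$. The \textbf{upper bound} on $|f(x)-f(y)|$ relative to $|f(x)-f(z)|$ comes from the I-LLC(1) property: connect $f(x)$ and $f(y)$ — equivalently, via $f^{-1}$, build a continuum $E$ through $x$ and $y$ inside $B_d(x,\Lambda s)$ — together with a continuum $F$ surrounding $x$ at scale $r$ coming from I-LLC(2) (and pushed out to contain $S_d(x,r_x)$ as in \Cref{def:iloew}); the I-Loewner condition then gives $\Mod_\mu\Gamma(E,F)\ge\phi(s/t')$ for an appropriate ratio, hence $\Mod\Gamma(fE,fF)\ge K^{-1}\phi(\cdots)$ by $\mu$-quasiconformality. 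On the other hand, the upper mass bounds for $\mathcal L_2$ near $f(x)$ (equivalently the ring-domain modulus estimate in the plane: the modulus of the family separating $fE$ from $fF$ is bounded in terms of $|f(x)-f(y)|$ and $|f(x)-f(z)|$) force $|f(x)-f(z)|/|f(x)-f(y)|$ to be bounded below by a quantity depending only on $t$ and the structural constants; this yields $\eta(t)$ for the upper half of \eqref{equ:qsu}.

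For the \textbf{lower direction} — that $|f(x)-f(y)|/|f(x)-f(z)|$ is not too small, which is what \eqref{equ:qsu} actually asserts after relabeling — I would run the symmetric argument in $\Omega$: use the planar Loewner property to get a modulus lower bound for the curve family joining the continuum through $f(x),f(z)$ to one surrounding $f(x)$ at scale $|f(x)-f(y)|$, transport it back through $f^{-1}$ using $\mu$-quasiconformality to a lower bound on $\Mod_\mu$ of the corresponding family in $X$, and then contradict it with the I-MM upper mass bound (via \Cref{ahlfors}, \Cref{mumass}): a too-small ratio would make the $\mu$-mass available between the two continua too small to support that much modulus. Quantifying all constants gives a single control function $\eta$, depending only on $K$, the I-LLC constant $\Lambda$, the I-MM constant $C_i$, and the I-Loewner function $\phi$. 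I should also record that $\eta$ can be taken to be a genuine homeomorphism of $[0,\infty)$ by the usual normalization argument (monotonizing and regularizing the estimate, extending to $t\to\infty$ via the LLC-type connectivity).

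\textbf{The main obstacle} is bookkeeping the radii: every auxiliary object — the continua $E$, $F$, the surrounding loops, the balls in the covering arguments behind Lemmas \ref{ahlfors}--\ref{mulength} — must live in a fixed $B_d(x,r_x)$ on which I-LLC, I-MM, I-Loewner, the admissible cover, and the local compactness / image-in-$\Omega$ properties all hold, and the dilation constants ($\Lambda$ for I-LLC, $\lambda$ for enlarging balls) must be chosen so that enlarging a ball by these factors does not leave the good region. In particular, when $s/t$ is large the continuum $E$ has to reach from $x$ out to near $S_d(x,r)$ while $F$ still surrounds it, and one must verify the configuration genuinely falls under the hypotheses of \Cref{def:iloew} (the roles of $s,t$ there, with $F\supset S_d(x,r_x)$). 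Once the scales are pinned down, the modulus estimates themselves are routine adaptations of \cite{HK:98} combined with the planar ring-modulus bounds and the comparability $\Mod_\mu\asymp K\,\Mod(f\,\cdot)$.
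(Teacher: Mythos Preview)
Your strategy is the same Heinonen--Koskela scheme the paper uses: pair an I-Loewner lower bound in $X$ with a planar annulus upper bound in one direction, and an I-MM annulus upper bound in $X$ with the planar Loewner lower bound in the other, transporting through $\mu$-quasiconformality. The paper carries this out with one simplifying choice you do not make: it proves the \emph{equivalent} statement that $g=f^{-1}\colon\Omega\to X$ is I-QS. This lets the continua on the $\Omega$ side be explicit line segments $[x,y]$ and $[z,w]$ (with $w$ chosen so that $[x,w]$ reaches out to $g^{-1}S_d(x',r_{x'})$), which pins down exactly where $fE$ and $fF$ sit and makes both the planar ring bound and the planar Loewner bound immediate.

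Working directly with $f$ as you propose is fine in principle, but your description of the ``upper bound'' step has the dependencies reversed. With $E$ joining $x,y$ (radius $s$) and $F$ joining $S_d(x,r)$ to $S_d(x,r_x)$, the I-Loewner bound reads $\Mod_\mu\Gamma(E,F)\ge\phi(r/s)$, which depends on the \emph{source} ratio $t$; meanwhile $fE\ni f(x),f(y)$ already has diameter $\ge|f(x)-f(y)|$, so when $|f(x)-f(y)|\gg|f(x)-f(z)|$ the image continua are not separated by a thick planar annulus and no ring upper bound in terms of the \emph{target} ratio is available. The fix is exactly what the paper does in its Case~1: build $E$ through $x,z$ confined to $B_d(x,\lambda\, d(x,z))$ and $F$ through $y$ avoiding $B_d(x,d(x,y)/\lambda)$ via I-LLC, then use the I-MM annulus \emph{upper} bound on $\Mod_\mu$ (the explicit admissible function summing $\chi_{A_j}/\mu(B_j)^{1/2}$ over dyadic shells) together with planar Loewner on the image. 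Your ``lower direction'' paragraph is really this argument; your ``upper bound'' paragraph is the paper's Case~2, which needs the continua arranged so that the I-Loewner input is the \emph{target}-side ratio. If you adopt the paper's device of passing to $g=f^{-1}$, both configurations fall out immediately from segments in $\Omega$ and the bookkeeping of radii you flag becomes a single choice of $r>0$ with $B(x,3r)\subset\Omega\cap g^{-1}(B_d(x',r_{x'}/(10\lambda^4\Lambda^4)))$.
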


\begin{proof}
Let $\Lambda$ and $\lambda$ be the constants in Definitions \ref{def:imm} and \ref{def:illc} of I-MM and I-LLC, respectively. 
We will prove the equivalent statement that $g=f^{-1}$ is I-QS. In this proof, for a point $a \in \Omega$ and set $A \subset \Omega$, let $a'=g(a)$ and $A'=g(A)$. 

Fix $x \in \Omega$ and $r>0$ so that 
\[B(x,3r) \subset \Omega \cap g^{-1}\big(B(x',r_{x'}/(10\lambda^4 \Lambda^4))\big), \]
and $y,z \in B(x,r)$. By our choice of $r$, we can choose $w \in g^{-1}S(x',r_{x'})$ so that the segment $[x,w]$ contains $z$. Moreover, taking $r$ to be sufficiently small, we can ensure that the segment $[x,w]$ lies in $\Omega$. Notice that $w \notin B(x,3r)$. Let $m=d(x',y')$ and $ \ell = d(x',z')$. Let $t>0$. We must find an upper bound $\eta(t)$ on $m/\ell$ that holds whenever $\abs{x-y}/\abs{x-z} \leq t$, such that $\eta(t) \to 0$ as $t \to 0$. Assume then that $y,z$ satisfy $\abs{x-y}/\abs{x-z} \leq t$.

Suppose first that $m/\ell \ge \Lambda \lambda^2$. Then, by the I-LLC property, we can connect $x'$ to $z'$ by a continuum $E'$ contained in $B(x',\lambda \ell)$, and $y'$ to $w'$ by a continuum $F'$ contained in $X \setminus B(x',m/\lambda)$. Let $k = \lceil \log_\Lambda (m/(\ell \lambda^2)) \rceil$, 
\[B_j=B(x',\Lambda^{j} \ell/\lambda),\,\,\, \text{and} \,\,\, A_j=B(x',\Lambda^j \ell/\lambda) \setminus B(x',\Lambda^{j-1}\ell/\lambda).\] 
Then, by the definition of I-MM, 
\[\rho =\frac1k \sum_{j=1}^{k} \frac{C_i\chi_{A_j}}{\mu(B_j)^{1/2}} \]
is $\mu$-admissible for $\Gamma(E',F')$. Thus 
\[\Mod_\mu \Gamma(E',F') \le \int_X\rho^2 \, d\mu \le  \frac{1}{k^2}\sum_{j=1}^k \frac{C_i^2 \mu(A_j)}{\mu(B_j)} \leq \frac{C_i^2}{k}\leq \frac{C_i^2}{\log_\Lambda (m/(\ell \lambda^2))}. 
\]
Hence $\Mod_\mu \Gamma(E',F')$ becomes arbitrarily small as $m/\ell$ increases to infinity. 

Since $g$ is $\mu$-quasiconformal, $\Mod \Gamma(E,F)$ is also small, where $E=g^{-1}(E')$ and $F=g^{-1}(F')$. But these sets connect $x$ to $z$ and $y$ to $w$, respectively, and have relative distance
\[ \Delta(E,F) = \frac{\dist(E,F)}{\min \{ \diam E, \diam F \} } \le \frac{\abs{x-y}}{\abs{x-z}}.\]
Thus, by the Loewner property of $\R^2$, we have $\abs{x-y}/\abs{x-z} \to \infty$ as $m/\ell \to \infty$, establishing the distortion inequality in this case. 

Suppose then that $0<m/\ell < \Lambda \lambda^2$. In this case we choose $E=[x,y]$ and $F=[z,w]\cup g^{-1}S(x',r_{x'})$. 
We may assume that $2\abs{x-y}<\abs{x-z}$, since otherwise there is nothing to prove. Applying the I-Loewner condition to $E'$ and $F'$, we have   
\[ \Mod_\mu \Gamma(E',F') \geq \phi(\ell/m). \]

Combining with the $\mu$-quasiconformality of $g$, we get $\Mod \Gamma(E,F) \geq K^{-1}\phi(\ell/m)$. On the other hand, by our choice of $w$ we can estimate $\Mod \Gamma(E,F)$ from above as follows:  
\[ \Mod \Gamma(E,F) \leq \Mod \Gamma(S(x,\abs{x-z}),S(x,\abs{x-y}))= 2\pi \Big(\log \frac{\abs{x-z}}{\abs{x-y}}\Big)^{-1}. \]
Combining the estimates, we see that $\phi(\ell/m) \leq 2\pi K(\log(1/t))^{-1}$. Observe that this bound becomes arbitrarily small as $t \to 0$. Since $\phi$ is decreasing, this yields an upper bound $\eta(t)$ on $m/\ell$ that goes to zero as $t \to 0$.
\end{proof}


\section{Examples} \label{sec:exm}

In this section, we work out in detail a number of specific examples of metric spaces homeomorphic to the plane. All of our examples have locally finite Hausdorff 2-measure, and we assume throughout this section that a given metric space is equipped with the Hausdorff 2-measure. We write a point $x$ in coordinates as $x = (x_1, x_2)$ if $x \in \mathbb{R}^2$ or $x = (x_1, x_2, x_3)$ if $x \in \mathbb{R}^3$.



In addition to the examples of this section, we refer the reader to Example 4.7 of \cite{HK:95} for a family of uniformly LLC surfaces in $\mathbb{R}^3$, equipped with the ambient Euclidean metric, that are conformally equivalent but not uniformly QS equivalent to the Euclidean plane. 
We also refer to Example 2.1 of \cite{Raj:16} for an example of a non-reciprocal metric on the plane, and to Example 17.1 of \cite{Raj:16} for a non-rectifiable surface in $\mathbb{R}^3$ that is QC equivalent to the Euclidean plane. Finally, see \cite{Rom:19} for the construction of a surface of locally finite Hausdorff 2-measure that is QS equivalent to the plane but not QC equivalent.  


\subsection{Conformal weight that decreases rapidly near the origin} \label{sec:exponential_weight}

	Define a metric $d$ on the Riemann sphere $\widehat{\mathbb{R}}^2 = \mathbb{R}^2 \cup \{\infty\}$ via the conformal weight \[\omega(x) = \left\{ \begin{array}{cc} e^{-1/\abs{x}}/\abs{x}^2& \text{ if } x \neq 0 \\ 0 & \text{ if } x = 0, \infty \end{array} \right. .\]
	That is, for all $x,y \in \widehat{\mathbb{R}}^2$, the metric $d$ is given by $d(x,y) = \inf_\gamma \int_\gamma \omega\, ds$, where the infimum is taken over all absolutely continuous paths $\gamma\colon [0,1] \to \widehat{\mathbb{R}}^2$ such that $\gamma(0) = x$ and $\gamma(1) = y$. 
	
	It is easy to check that $d(0,x) = e^{-1/\abs{x}}$ for all $x \in \widehat{\mathbb{R}}^2 \setminus \{0\}$. In particular, $d(0,\infty) = 1$ and we see that $d$ is finite. Next, let $x,y \in \widehat{\mathbb{R}}^2 \setminus \{0\}$ and assume that $\abs{x} \leq \abs{y}$. By considering the concatenation of the straight-line path from $x$ to $(\abs{y} / \abs{x})x$ and a circular arc from $(\abs{y} / \abs{x})x$ to $y$, we obtain the estimate 
	\[d(x,y) \leq  e^{-1/\abs{y}} - e^{-1/\abs{x}} + \frac{2\pi e^{-1/\abs{y}}}{\abs{y}}.\]
	As a consequence, if $(x_j)$ and $(y_j)$ are sequences in $\widehat{\mathbb{R}}^2$ such that $x_j \to \infty$ and $y_j \to \infty$, then $d(x_j, y_j) \to 0$. 
	This is sufficient to conclude that $(\widehat{\mathbb{R}}^2, d)$ is homeomorphic to the Riemann sphere.
	
	In fact, by considering the pushforward of $\omega$ under the inversion map $x \mapsto x/\abs{x}^2$, we see that $(\widehat{\mathbb{R}}^2,d)$ is isometric to the metric space $(\widehat{\mathbb{R}}^2,\widetilde{d})$, where $\widetilde{d}$ is the metric generated by the conformal weight $\widetilde{\omega}(x) = e^{-\abs{x}}$. In particular, any ball in $(\widehat{\mathbb{R}}^2,d)$ centered at $\infty$ not containing the origin is bi-Lipschitz equivalent to a Euclidean disk.
	
	In \Cref{fig:geodesics}, a number of geodesics emanating from the point $p=(.3,0)$ are plotted. Observe that the length-minimizing path from $p$ to a point $q$ in the upper left region of the plot is the concatenation of the straight-line path from $p$ to the origin and the straight-line path from the origin to $q$. 
	
	This example illustrates how metric quasiconformality is not preserved in general under taking inverses or under precomposition with a quasisymmetry, as the following proposition shows. 
	
	\begin{proposition} \label{prop:example1_identity}
		Let $\iota\colon (\mathbb{R}^2,\abs{\cdot}) \rightarrow (\mathbb{R}^2,d)$ be the identity map, and let $h\colon \mathbb{R}^2 \rightarrow \mathbb{R}^2$ be the linear map defined by $h(x_1,x_2) = (x_1/2, x_2)$.
		\begin{enumerate}[label=(\alph*)]
		\item $\iota$ is MQC with $H=1$, as is its inverse. \label{item_a}
		\item $\iota$ is 1-QC. \label{item_b}
		\item $\iota$ is not I-QS. \label{item_c}
		\item $(\iota \circ h)^{-1}$ is MQC. \label{item_d}
		\item $\iota \circ h$ is not MQC. \label{item_e}
		\end{enumerate} 
	\end{proposition}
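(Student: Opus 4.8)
The plan is to exploit the special structure of the weight $\omega$ near the origin, where $\omega$ decays super-polynomially, so that lengths of curves passing near $0$ are negligible compared with Euclidean lengths, while away from $0$ the map $\iota$ is locally conformal (indeed locally bi-Lipschitz). For part \ref{item_a}, fix a point $x \in \mathbb{R}^2$. If $x \neq 0, \infty$, then in a small Euclidean ball around $x$ the weight $\omega$ is continuous and positive, so $\iota$ is locally bi-Lipschitz there with constant tending to $1$ as the ball shrinks; hence the infinitesimal distortion quotient in the MQC definition tends to $1$, giving $H = 1$ at such points. At $x = 0$ one computes directly from $d(0,y) = e^{-1/|y|}$ that $\sup\{d(0,y): |y| \le r\} = e^{-1/r}$ and $\inf\{d(0,y): |y| \ge r\} = e^{-1/r}$, so the quotient is exactly $1$ for every $r$; the same computation handles $x = \infty$ via the isometry with the weight $\widetilde\omega(x) = e^{-|x|}$. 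The same argument applied to $\iota^{-1}$ gives $H=1$ for the inverse, since the roles of the two metrics are symmetric in the infinitesimal quotient. Part \ref{item_b} follows from the classical fact that a conformal deformation of $\mathbb{R}^2$ by a weight $\omega$ that is locally bounded away from $0$ and $\infty$ (outside a set that is negligible for modulus, here the two points $0,\infty$) is $1$-QC: the $2$-modulus of a curve family is unchanged because $d\mathcal H^2 = \omega^2\, d\mathcal L_2$ and admissible densities transform by $\rho \mapsto \rho/\omega$; one only needs that $\{0,\infty\}$ carries zero $2$-modulus for all relevant curve families, which is standard.

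For part \ref{item_c}, the idea is that $\iota$ badly distorts relative distances of triples arbitrarily close to the origin. Take $x$ near $0$ and consider points $y, z$ with $|x-y|/|x-z|$ small but arranged so that the $d$-geodesic from $x$ to $z$ is forced to detour through (a neighborhood of) the origin — as in the picture, the $d$-length-minimizing path from $p$ to a point in the "far" direction goes straight to $0$ and then out again. Concretely, fix $x = (a,0)$ with $a>0$ small and let $z = (-b, 0)$ with $b$ comparable to $a$, so $d(x,z) \le d(x,0) + d(0,z) = e^{-1/a} + e^{-1/b}$, while choosing $y = (a + \tau, 0)$ with $\tau \ll a$ gives $d(x,y) \approx \omega(x)\tau = e^{-1/a}\tau/a^2$, which can be made much larger than $d(x,z)$ even though $|x-y| = \tau$ is much smaller than $|x-z| \approx a + b$. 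Letting $a \to 0$ with the Euclidean ratio $|x-y|/|x-z|$ fixed and small, the $d$-ratio $d(x,y)/d(x,z)$ blows up, so no single $\eta$ can work in any neighborhood of $0$; this contradicts I-QS at the point $0$.

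For parts \ref{item_d} and \ref{item_e}, note $h$ is a global bi-Lipschitz (linear) map, so $(\iota \circ h)^{-1} = h^{-1} \circ \iota^{-1}$ is MQC wherever $\iota^{-1}$ is, with the same argument as in \ref{item_a}: at every point of $(\mathbb{R}^2, d)$ the map $\iota^{-1}$ has infinitesimal distortion $1$, and composing with the bi-Lipschitz $h^{-1}$ changes the distortion by at most the bi-Lipschitz constant, which is bounded; in fact at the origin the circular symmetry of $d$ near $0$ makes even the composed distortion quotient bounded. This proves \ref{item_d}. For \ref{item_e}, the point is that $\iota \circ h$ fails MQC \emph{at the origin}: the weight $\omega$ is radially symmetric, so $d$-balls around $0$ are round, but $h$ maps a Euclidean round ball to an ellipse; the infinitesimal $d$-image of a Euclidean sphere $\{|u| = r\}$ under $\iota \circ h$ is $d$-comparable to the image of the ellipse $h(\{|u|=r\})$, whose $d$-distance extremes from $0$ are $e^{-1/(r/2)} = e^{-2/r}$ (minor axis) and $e^{-1/r}$ (major axis), whose ratio $e^{2/r}/e^{1/r} = e^{1/r} \to \infty$ as $r \to 0$. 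Hence the MQC quotient at $0$ is infinite. The main obstacle is making the geodesic-detour estimates in \ref{item_c} and the sphere-image estimates in \ref{item_e} rigorous — i.e., controlling $d(x,y)$ from below and above by the weight-induced length — but this follows from the explicit formula $d(0,x) = e^{-1/|x|}$ together with the elementary upper bound on $d(x,y)$ already recorded in the excerpt (the straight-line-plus-arc estimate) and the observation that near $0$ any path from $x$ to a point on the far side has length at least $d(x,0)$ minus lower-order terms.
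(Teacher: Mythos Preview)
Your arguments for \ref{item_a}, \ref{item_b}, \ref{item_d}, and \ref{item_e} match the paper's in substance and are fine. (For \ref{item_e} the paper does exactly your ellipse computation: with $y_j=(t_j,0)$ and $z_j=(0,t_j)$ one has $d(0,h(y_j))=e^{-2/t_j}$ and $d(0,h(z_j))=e^{-1/t_j}$, so the MQC quotient at $0$ is $e^{1/t_j}\to\infty$.)

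The argument for \ref{item_c}, however, has a genuine gap. In the definition of I\nobreakdash-QS the control function $\eta$ is uniform but the radius $r_x$ is allowed to depend on the base point $x$, and the quasisymmetry inequality is only required for triples $(x,y,z)$ with that particular $x$ as the first entry. You take as base point $x=(a,0)$ with $a>0$ and produce a bad pair $y=(a+\tau,0)$, $z=(-b,0)$. But $|x-z|\approx 2a$, so as soon as one chooses $r_x<a$ (which is permitted, since $r_x$ may depend on $x$), your point $z$ falls outside $B(x,r_x)$ and the triple no longer tests the I\nobreakdash-QS condition. Moreover, on $B((a,0),a/10)$ the weight $\omega$ is continuous and positive, so $\iota$ is bi\nobreakdash-Lipschitz there with constant arbitrarily close to $1$; hence the QS inequality holds on that ball with a universal $\eta$. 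Thus your family of triples does not contradict I\nobreakdash-QS at any point, and the concluding sentence ``this contradicts I\nobreakdash-QS at the point $0$'' is unjustified: none of your triples have $0$ as base point.

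The fix is to base at $0$ from the start and use the radial formula directly, which is what the paper does: take $y_j=(2t_j,0)$, $z_j=(t_j,0)$ with $t_j\to 0$. Then $|y_j|/|z_j|=2$, while
\[
\frac{d(0,y_j)}{d(0,z_j)}=\frac{e^{-1/(2t_j)}}{e^{-1/t_j}}=e^{1/(2t_j)}\longrightarrow\infty.
\]
For any $r_0>0$ the points $y_j,z_j$ eventually lie in $B(0,r_0)$, so no $\eta(2)$ can work; this is exactly the failure of I\nobreakdash-QS at $x=0$. Your detour through a moving base point and geodesics going through the origin is unnecessary and, as written, does not close.
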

	\begin{proof}
	Claim \ref{item_a} is immediate for all $x \neq 0$ by virtue of $\omega$ being a conformal weight, and it also holds for $x=0$ by the radial symmetry of $\omega$.
	
	Claim \ref{item_b} is also immediate if we exclude $x=0$. However, observe that reciprocality condition \eqref{equ:reciprocality(3)} holds for the metric $d$ at the origin. Thus the geometric definition is unaffected by adding the origin back in, so the claim holds on all of $\mathbb{R}^2$.
	
	For claim \ref{item_c}, let $(t_j)$ be a sequence of positive numbers converging to zero, and let $y_j = (2t_j,0)$, $z_j = (t_j,0)$. Then $\abs{y_j - 0} = 2t_j$, $\abs{z_j - 0} = t_j$, $d(y_j,0) = \sqrt{e^{-1/t_j}}$, and $d(z_j,0) = e^{-1/t_j}$. But then $\abs{y_j - 0}/\abs{z_j-0} = 2$ while $d(y_j,0)/d(z_j,0) \rightarrow \infty$, violating the I-QS condition.
	
	For claim \ref{item_d}, note that $(\iota \circ h)^{-1}=h^{-1} \circ \iota^{-1}\colon (\mathbb{R}^2,d) \rightarrow (\mathbb{R}^2,\abs{\cdot})$ is the postcomposition of a MQC map by a QS map, which is always MQC.
	
	Claim \ref{item_e} follows from a variation of the argument for \ref{item_c}. Let $(t_j)$ again be a sequence of positive numbers converging to zero, and let $y_j = (t_j,0)$ and $z_j = (0,t_j)$. Then $h(y_j) = (t_j/2,0)$ and $h(z_j) = z_j$. This gives $d(h(y_j),0) = \sqrt{e^{-1/t_j}}$ and $d(z_j,0) = e^{-1/t_j}$, showing that $\iota \circ h$ is not MQC.
	\end{proof}
	
	Claim \ref{item_c} of \Cref{prop:example1_identity} can be strengthened to the following.
	\begin{proposition}
	There is no I-QS map $f\colon (\mathbb{R}^2,\abs{\cdot}) \rightarrow (\mathbb{R}^2,d)$.
	\end{proposition}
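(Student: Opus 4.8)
The plan is to argue by contradiction, exploiting the fact that the bad behavior in Claim (c) of \Cref{prop:example1_identity} occurs at a single point (the origin) but any hypothetical I-QS map $f$ must send \emph{some} point of $(\R^2, |\cdot|)$ to the origin, and near that preimage the argument of Claim (c) can be reproduced. So suppose $f\colon (\R^2,|\cdot|) \to (\R^2,d)$ is I-QS with control function $\eta$, and let $p = f^{-1}(0)$. The first step is to recall the local structure of $d$ near $0$: from the explicit formula $d(0,x) = e^{-1/|x|}$, the $d$-distance from the origin decays faster than any polynomial in the Euclidean distance, and more precisely, for Euclidean-small $x$ one has $d(0, 2x)/d(0,x) = e^{-1/(2|x|)}/e^{-1/|x|} = e^{1/(2|x|)} \to \infty$ as $x \to 0$. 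This is the quantitative fact that will break quasisymmetry.

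The second step is to transfer this to $f$. Fix the radius $r_p > 0$ from \Cref{def:qs} associated to the point $p$. Choose a sequence of points $u_j \to p$ in $B(p, r_p)$ with $|u_j - p| \to 0$, and let $v_j$ be the point on the Euclidean segment $[p, u_j]$ with $|v_j - p| = |u_j - p|/2$; both $u_j, v_j$ lie in $B(p,r_p)$ for $j$ large. Since $|v_j - p|/|u_j - p| = 1/2$, the I-QS condition \eqref{equ:qsu} (applied with $x = p$, and noting $f(p) = 0$) forces
\[
\frac{d\big(0, f(v_j)\big)}{d\big(0, f(u_j)\big)} \le \eta(1/2)
\]
for all large $j$. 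The third step is to derive a contradiction by showing the left-hand side is in fact unbounded. Write $a_j = d(0, f(u_j)) = |f(u_j)|_{\text{eucl-weight}}$-type quantity — more precisely $d(0, f(u_j)) = e^{-1/|f(u_j)|}$ where $|f(u_j)|$ denotes the Euclidean norm of the point $f(u_j) \in \R^2$. Since $f$ is a homeomorphism and $u_j \to p$, we have $f(u_j) \to 0$ in the $d$-metric, hence (the metrics being topologically equivalent on $\widehat{\R}^2$) $|f(u_j)| \to 0$ in the Euclidean sense as well. Now apply the I-QS condition in the \emph{other} direction to compare $|f(u_j)|$ and $|f(v_j)|$: we need a two-sided comparison showing $|f(v_j)|$ and $|f(u_j)|$ are Euclidean-comparable, or at least that $|f(v_j)| \le c |f(u_j)|$ for a fixed $c$. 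This follows because on small balls around $0$ the metric $d$ is comparable to $e^{-1/|\cdot|}\,|\cdot|$-weighted length near $0$, but more cleanly: since $d(0, \cdot) = e^{-1/|\cdot|}$ is a strictly increasing function of the Euclidean norm, the condition $d(0,f(v_j)) \le \eta(1/2)\, d(0,f(u_j)) < d(0, f(u_j))$ already gives $|f(v_j)| \le |f(u_j)|$, so we may even take $c = 1$. But then, writing $s_j = |f(u_j)| \to 0^+$ and $\tau_j = |f(v_j)| \le s_j$,
\[
\eta(1/2) \ge \frac{d(0,f(v_j))}{d(0,f(u_j))} = \frac{e^{-1/\tau_j}}{e^{-1/s_j}} = e^{1/s_j - 1/\tau_j},
\]
which is consistent only if $1/\tau_j - 1/s_j$ stays bounded, i.e. $\tau_j$ is Euclidean-comparable to $s_j$ with multiplicative error tending to $1$. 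To close the loop I then invoke I-QS once more with the roles arranged so that a \emph{fixed} ratio $|u_j - p|/|v_j - p| = 2$ on the source forces $d(0,f(u_j))/d(0,f(v_j)) \le \eta(2)$, hence $e^{1/\tau_j - 1/s_j} \le \eta(2)$; combined with the near-equality of $\tau_j$ and $s_j$ forced above and the local bi-Lipschitz comparison between $d$ and the Euclidean metric away from $0$ near the point $f(v_j)$ — wait, this is circular.

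The cleaner route, which I would actually carry out: use the I-LLC/segment trick of \Cref{prop:example1_identity}\ref{item_c} directly. Since $f^{-1}$ is also I-QS (I-QS maps are closed under inverses, as noted after \Cref{def:qs}), consider $g = f^{-1}\colon (\R^2, d) \to (\R^2, |\cdot|)$ and apply the I-QS condition of $g$ at the point $0 \in (\R^2,d)$, with radius $r_0 > 0$. Pick the points $y_j = (2t_j, 0)$ and $z_j = (t_j,0)$ from the proof of \ref{item_c}, with $t_j \to 0$: these lie in the $d$-ball $B_d(0, r_0)$ for large $j$ since $d(0,y_j) = e^{-1/(2t_j)} \to 0$. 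We computed there that $d(0,y_j)/d(0,z_j) = e^{-1/(2t_j)}/e^{-1/t_j} = e^{1/(2t_j)} \to \infty$. If $g$ were I-QS at $0$ with control $\eta_g$, then since $d(0,z_j) \le d(0,y_j)$ eventually fails — rather, \emph{because} $d(0,y_j)/d(0,z_j) \to \infty$, for every $t$ we'd eventually have $d(0,y_j)/d(0,z_j) > t$ yet we need the Euclidean ratio $|g(y_j) - g(0)|/|g(z_j) - g(0)| = |y_j|/|z_j| = 2$ to bound $d(0,y_j)/d(0,z_j)$ from above by $\eta_g^{-1}$ applied appropriately — concretely, $|g(z_j)|/|g(y_j)| = 1/2$ forces $d(0, z_j)/d(0,y_j) \le \eta_g(1/2)$, i.e. $e^{-1/t_j + 1/(2t_j)} = e^{-1/(2t_j)} \le \eta_g(1/2)$, which is \emph{true} and gives nothing; the violation comes from the reverse: $|g(y_j)|/|g(z_j)| = 2$ forces $d(0,y_j)/d(0,z_j) \le \eta_g(2)$, i.e. $e^{1/(2t_j)} \le \eta_g(2)$, which is false for $j$ large. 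This is the contradiction. Since $g$ I-QS $\iff$ $f$ I-QS, no such $f$ exists.

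The main obstacle is purely organizational: ensuring that the sample points $y_j, z_j$ genuinely lie in the distinguished neighborhood $B(0, r_0)$ on which the I-QS inequality is available (this is immediate from $d(0,\cdot)\to 0$), and being careful about which direction of the quasisymmetry inequality produces the blow-up — one must compare the pair in the order that places the \emph{large} $d$-ratio on the side bounded by $\eta$. Everything else is the elementary estimate $d(0,x) = e^{-1/|x|}$, already established in the excerpt, together with the closure of the I-QS class under inversion.
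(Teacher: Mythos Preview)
Your ``cleaner route'' has a genuine gap. You set $g = f^{-1}$ for an \emph{arbitrary} I-QS map $f$, choose the specific points $y_j = (2t_j,0)$ and $z_j = (t_j,0)$ in $(\R^2,d)$, and then assert that
\[
\frac{|g(y_j)-g(0)|}{|g(z_j)-g(0)|} = \frac{|y_j|}{|z_j|} = 2.
\]
This equality holds only if $g$ is the identity map $\iota^{-1}$; for a general $f$ you have no control over where $f^{-1}$ sends the particular points $y_j,z_j$, and hence no a priori bound on the Euclidean ratio $|g(y_j)-p|/|g(z_j)-p|$. Without that bound you cannot invoke the I-QS inequality for $f$ to force $d(0,y_j)/d(0,z_j) \le \eta(2)$. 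In effect, your argument re-proves claim~(c) of \Cref{prop:example1_identity} (that $\iota$ is not I-QS) but does not transfer it to an arbitrary $f$. Your first attempt ran into the same wall from the other side: you correctly deduced that $|1/|f(u_j)| - 1/|f(v_j)||$ must stay bounded, but this alone is not a contradiction---it is a constraint any I-QS $f$ would have to satisfy, and you have not shown it is incompatible with $f$ being a homeomorphism.

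The paper closes this gap by bringing in classical Euclidean quasiconformal theory. One knows from \Cref{prop:example1_identity}(a) that $\iota\colon (\R^2,|\cdot|)\to(\R^2,d)$ is MQC. If $f$ were I-QS then $f^{-1}$ would be I-QS as well, and postcomposing an MQC map by an I-QS map yields an MQC map; hence $f^{-1}\circ\iota\colon (\R^2,|\cdot|)\to(\R^2,|\cdot|)$ is MQC between Euclidean planes. By the classical equivalence of definitions (e.g.\ \cite[Thm.~34.1]{Vai:71}), $f^{-1}\circ\iota$ is then QS. But now $\iota = f\circ(f^{-1}\circ\iota)$ is the composition of an I-QS map with a QS map, hence I-QS, contradicting claim~(c). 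The key move you are missing is this passage through the Euclidean self-map $f^{-1}\circ\iota$, where the strong classical theory lets you upgrade MQC to QS.
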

	\begin{proof}
	Suppose that such an I-QS map $f$ exists. Then $f^{-1}$ is also I-QS. Since metric quasiconformality is preserved under postcomposition by an I-QS map, it follows that $f^{-1}\circ \iota$ is an MQC map of the Euclidean plane. By the equivalence of definitions of quasiconformality in the Euclidean setting (for example, see \cite[Thm. 34.1]{Vai:71}), we conclude that $f^{-1}\circ \iota$ is QS and thus that $\iota$ itself is I-QS. This contradicts claim \ref{item_c} of \Cref{prop:example1_identity}.
	\end{proof}

	Note that the claims in \Cref{prop:example1_identity} all hold if we replace $\mathbb{R}^2$ with $\widehat{\mathbb{R}}^2$ equipped with the spherical metric. We also observe that 
	$(\mathbb{R}^2,d)$ is not upper 2-regular: The Hausdorff 2-measure of the ball $B_r=B(0,r)$, where $r \in [0,1]$, is given by
	\[\mathcal{H}^2(B_r) = \int_{B_r} \omega^2\,d\mathcal{L}^2 = 2\pi \int_0^R e^{-2/t}/t^3\,dt,\]
	where $R = -(\log r)^{-1}$. This evaluates to \[\mathcal{H}^2(B_r) = 2\pi e^{-2/R}\left(\frac{1}{4} + \frac{1}{2R}\right) = 2\pi r^2\left( \frac{1}{4} - \frac{\log r}{2}\right).\] 
	Since $-\log r \rightarrow \infty$ as $r \rightarrow 0$, we see that upper 2-regularity fails.
	
	
	\begin{figure}[t]
		\centering
		\includegraphics[width=0.6\textwidth]{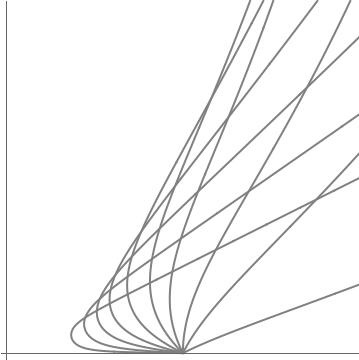}
		\caption{Geodesics emanating from the point $(.3,0)$}
		\label{fig:geodesics}
	\end{figure}

	\begin{proposition} \label{prop:llc_loewner}
	The space $(\widehat{\mathbb{R}}^2, d)$ is linearly locally connected. However, it is not a Loewner space.
	\end{proposition}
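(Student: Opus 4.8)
I would treat the two assertions separately.

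\emph{Linear local connectedness.} Since $d$ is defined by a length functional and $(\widehat{\mathbb{R}}^2,d)$ is homeomorphic to $S^2$, it is a compact geodesic space, so the first condition in the definition of LLC is immediate with $\lambda=2$: a geodesic $\gamma$ joining $y,z\in B(x,r)$ satisfies $\min\{d(y,w),d(z,w)\}\le\tfrac12 d(y,z)<r$ for $w\in\gamma$, hence $d(x,w)<2r$. For the second condition I would distinguish balls by position. Writing the conformal factor in the variable $u=d(0,\cdot)=e^{-1/\abs{\cdot}}$ as $u\mapsto u\log^2(1/u)$, one checks that over a ball $B(x,r)$ with $d(0,x)\ge 20r$ this factor varies by a bounded factor, so (away from $\infty$) $d$ is bi-Lipschitz to a rescaled Euclidean metric with a universal constant on such a ball and on a fixed dilate of it; near $\infty$ one instead invokes the bi-Lipschitz property recorded above. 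For these balls the second LLC condition reduces to the corresponding fact for a Euclidean disk. It remains to treat balls with $d(0,x)<20r$, i.e.\ balls near the origin; here the decisive point is that the surface flares out at the origin, the circle $\{d(0,\cdot)=c\}$ having $d$-length $2\pi c\log(1/c)\gg 2\pi c$. From this one deduces that such a ball cannot contain a whole circle $\{d(0,\cdot)=c\}$, hence does not separate $\widehat{\mathbb{R}}^2$ nontrivially, which reduces the second LLC condition to joining two points in the complement of a ball $B(0,t)$ centred at the origin — and that complement equals $\{\abs{x}\ge 1/\log(1/t)\}\cup\{\infty\}$, which is connected. I expect the main difficulty to be the sub-case $r\asymp d(0,x)$, where the ball straddles the origin without being centred near it: one must extract a value of $\lambda$ independent of how close $x$ is to the origin, and the flaring estimate above is precisely what keeps the constants uniform.

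\emph{Failure of the Loewner condition.} Here I would exhibit an explicit sequence of pairs defeating every candidate Loewner function. For small $a>0$ let $p_a$ and $q_a=-p_a$ be the points with $\abs{p_a}=1/\log(1/a)$, so that $d(0,p_a)=d(0,q_a)=a$; since the angular geodesic between them has length $\pi a\log(1/a)>2a$, the connecting geodesic passes through the origin and $d(p_a,q_a)=2a$. Put $E_a=\overline{B(p_a,a/10)}$ and $F_a=\overline{B(q_a,a/10)}$, which are disjoint nondegenerate continua. As $\omega$ is essentially constant on the tiny Euclidean sets occupied by these balls, $\diam E_a=\diam F_a=(1+o(1))a/5$, hence $\dist(E_a,F_a)\in[9a/5,2a]$ and the relative distance $\dist(E_a,F_a)/\min\{\diam E_a,\diam F_a\}$ stays bounded (by $11$, say) as $a\to 0$. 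On the other hand $\Mod$ is a conformal invariant, so $\Mod\Gamma(E_a,F_a)$ equals the Euclidean modulus of the same family; in the Euclidean picture $E_a\subset\overline{B(p_a,2\rho_a)}$ with $\rho_a=(1+o(1))/(10\log^2(1/a))$ while $\abs{p_a-q_a}=2/\log(1/a)$, so $F_a$ lies in the complement of $B(p_a,\abs{p_a-q_a}/2)$. Every curve of $\Gamma(E_a,F_a)$ therefore crosses the round annulus $\{2\rho_a<\abs{x-p_a}<\abs{p_a-q_a}/2\}$, so that
\[
\Mod\Gamma(E_a,F_a)\ \le\ \frac{2\pi}{\log\!\bigl(\abs{p_a-q_a}/(4\rho_a)\bigr)}\ =\ \frac{2\pi}{\log\bigl((5+o(1))\log(1/a)\bigr)}\ \to\ 0\qquad(a\to 0).
\]
Since the relative distances remain bounded while the moduli tend to $0$, no decreasing $\phi\colon(0,\infty)\to(0,\infty)$ can satisfy $\Mod\Gamma(E,F)\ge\phi\bigl(\dist(E,F)/\min\{\diam E,\diam F\}\bigr)$ for all these pairs, so $(\widehat{\mathbb{R}}^2,d)$ is not Loewner. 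The only points requiring care are the elementary distance identities ($d(0,p_a)=a$, $d(p_a,q_a)=2a$, the near-constancy of $\omega$ on $E_a\cup F_a$) and the observation that $0$ and $\infty$, where $\omega$ vanishes, carry no $\mathcal{H}^2$-mass and no curves affecting the modulus, so that conformal invariance may be invoked; all of this follows from the computations already carried out in this subsection.
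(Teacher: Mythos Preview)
Your treatment of the failure of the Loewner condition is correct and uses the same mechanism as the paper: exploit the fact that the identity map is $1$-QC so that modulus can be computed in the Euclidean metric, then exhibit pairs of continua with bounded relative $d$-distance whose Euclidean configuration forces the modulus to zero via an annulus estimate. The paper chooses a fixed ray $E=(-\infty,0)\times\{0\}$ together with a shrinking segment $F_t=[r_t,R_t]\times\{0\}$ on the positive axis (so that $\Delta(E,F_t)=1$ exactly), while you take two antipodal balls $E_a,F_a$; both choices work, and your distance and radius computations are accurate.

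The LLC argument, however, has a genuine gap. First, the stated reason why a ball $B(x,r)$ near the origin ``cannot contain a whole circle $\{d(0,\cdot)=c\}$'' --- that this circle has large $d$-length --- is not the operative mechanism: the $d$-\emph{diameter} of that circle is only $2c$ (the geodesic between antipodal points passes through $0$), so length alone does not obstruct inclusion. What the flaring actually gives is that, when $0\notin B(x,r)$ and $r$ is small, the radial ray in the direction $-x/\abs{x}$ stays outside $B(x,r)$ (because $d(x,\cdot)\ge d(0,x)\ge r$ along it); from \emph{this} one can argue that no loop in $B(x,r)$ winds around $0$, hence $B(x,r)$ is simply connected. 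You do not supply this step, and your sketch does not cover off-centre balls with $0\in B(x,r)$, which do contain small level circles. Second, the phrase ``reduces the second LLC condition to joining two points in the complement of a ball $B(0,t)$ centred at the origin'' is not a valid reduction: points $y,z\notin B(x,r)$ need not lie outside any ball centred at $0$, and you give no way to connect them to such a region while avoiding $B(x,r/\lambda)$. Finally, the argument only addresses small radii, with no local-to-global step. The paper's route is different and cleaner: it first proves directly (the lemma immediately preceding the proof) that every small ball contained in $B(0,e^{-2})$ is simply connected, via a geometric argument using the radial monotonicity of $\omega$ on $\{|x|<1/2\}$; it then invokes Lemma~2.5 of Bonk--Kleiner, which upgrades ``small balls contractible inside their dilates'' to full LLC on a compact space.
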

	The proof of linear local connectedness uses the following lemma. 
	\begin{lemma}\label{lemm:simply_conn}
		Let $x \in \mathbb{R}^2$ and $r>0$ be such that $B(x,r) \subset B(0, e^{-2})$. Then $B(x,r)$ is simply connected. 
	\end{lemma}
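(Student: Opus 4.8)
The statement to prove is that if $B(x,r) \subset B(0,e^{-2})$ then the metric ball $B(x,r)$ in $(\widehat{\mathbb R}^2,d)$ is simply connected. The plan is to exploit the fact, established earlier in \Cref{sec:exponential_weight}, that $d$ is a conformal deformation of the Euclidean metric with a radially symmetric, radially increasing weight $\omega$, so that the only mechanism by which $d$-balls can differ topologically from Euclidean regions is the degeneracy at the origin (where $\omega=0$). Concretely, I would first record the key monotonicity: since $d(0,y)=e^{-1/|y|}$ is a strictly increasing function of $|y|$, for any point $p$ the quantity $d(0,p)$ determines $|p|$, and more importantly a $d$-geodesic from $0$ to any point $p$ is the Euclidean radial segment $[0,p]$, with $d(0,\cdot)$ increasing along it. The hypothesis $B(x,r)\subset B(0,e^{-2})$ means every point $y\in B(x,r)$ satisfies $d(0,y)<e^{-2}$, i.e.\ $|y|<e^{-1/2}$... more usefully, it guarantees a definite gap: writing $\rho_0=\sup_{y\in B(x,r)}|y|$, we have $\rho_0<e^{-1/2}$ and hence $\omega$ is bounded above on $\overline{B(x,r)}$ by its value at radius $\rho_0$.

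The main step is to show $B(x,r)$ is \emph{radially convex about the origin} in the following sense: if $p\in B(x,r)$ and $0<s\le 1$, then the Euclidean-scaled point $sp$ also lies in $B(x,r)$. Granting this, $B(x,r)$ deformation retracts onto $B(x,r)\cap\{0\}$ or onto a star-shaped set via the straight-line (Euclidean) homotopy $H(p,s)=s p$ shifted appropriately — more precisely, I would show $B(x,r)$ is star-shaped with respect to the origin in the Euclidean sense when $0\in B(x,r)$, and otherwise handle it by a direct retraction argument; in either case a star-shaped (hence contractible) open set is simply connected, and this topological conclusion is metric-independent since $d$ and $|\cdot|$ induce the same topology (noted earlier in the paper). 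To prove radial convexity, take $p\in B(x,r)$, so $d(x,p)<r$; I must show $d(x,sp)<r$ for $s\in(0,1]$. The idea is that moving $p$ radially inward toward the origin can only \emph{decrease} distances to a suitably chosen reference, because $\omega$ is radially increasing: one estimates $d(x,sp)$ by concatenating a path from $x$ to $p$ (of $d$-length $<r$) with the radial segment from $p$ to $sp$, but that overshoots; instead I would compare $d(x,sp)$ to $d(x,p)$ directly using that the radial contraction $p\mapsto sp$ is $1$-Lipschitz from $(\overline{B(x,r)},d)$ to itself — this follows because the Euclidean radial contraction does not increase Euclidean lengths of curves, and $\omega(sy)\le\omega(y)$ for $s\le1$ (as $\omega$ is radially increasing away from $0$), so it does not increase $\omega$-weighted lengths either, hence does not increase $d$-lengths; applying this to a near-geodesic from $x$ to $p$ and noting $x$ is a fixed point of the contraction only if $x=0$ requires a small additional argument when $x\neq 0$.

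The subtlety — and what I expect to be the main obstacle — is precisely the case $x\neq 0$: the radial contraction $p\mapsto sp$ does not fix $x$, so it is not immediate that it maps $B(x,r)$ into itself even though it is $1$-Lipschitz for $d$. To handle this I would instead argue that $B(x,r)$, being the $d$-ball of radius $r$ about $x$, pulls back under the (bi-Lipschitz on the relevant annulus, by the isometry with the weight $e^{-|x|}$ discussed earlier) change of coordinates to a convex-like region; alternatively, and more robustly, I would show directly that $\partial B(x,r)$ is a Jordan curve bounding a disk: since $(\widehat{\mathbb R}^2,d)$ is homeomorphic to $S^2$ and $B(x,r)$ is a connected open set whose complement $\widehat{\mathbb R}^2\setminus B(x,r)$ is also connected (it contains $\infty$ and is path-connected because one can route around the origin, using that the "bad" set where geodesics behave anomalously is exactly the single point $0$ and $0$ either lies in $B(x,r)$ or can be joined to the complement by a radial segment staying outside $B(x,r)$), the Jordan–Schoenflies theorem for $S^2$ gives that $B(x,r)$ is a topological disk, hence simply connected. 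I would present the star-shapedness argument as the main line when $0\in B(x,r)$ (the case that actually occurs in the application to linear local connectedness near the origin) and invoke the connectedness-of-complement plus Schoenflies argument to cover the general case.
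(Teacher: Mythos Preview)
Your proposal has a genuine gap in both branches. For the star-shapedness route, you correctly observe that the radial contraction $T_s\colon p\mapsto sp$ is $1$-Lipschitz for $d$ on the region $\{|y|<1/2\}$ (where $\omega$ is radially increasing), but this yields only $d(sx,sp)\le d(x,p)$, not $d(x,sp)\le d(x,p)$. You flag this obstacle and then set it aside, but it is the whole difficulty: when $x\neq 0$ there is no reason the Euclidean cone over $B(x,r)$ with apex $0$ should coincide with $B(x,r)$. Indeed the Gaussian curvature of the conformal metric $\omega^2|dz|^2$ is $K=\rho\,e^{2/\rho}>0$, so distance functions along geodesics need not be convex, and there is no soft reason for $s\mapsto d(x,sp)$ to be bounded by $\max\{d(x,0),d(x,p)\}$. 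Your fallback---show the complement of $B(x,r)$ in $\widehat{\mathbb{R}}^2$ is connected and invoke the topological equivalence of this with simple connectedness---is logically fine, but you give no argument for the connectedness of the complement beyond ``route around the origin''; ruling out a bounded complementary pocket is precisely the content of the lemma, so this is circular.

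The paper's proof is different and more direct. It argues by contradiction: assume a bounded component $V$ of $\widehat{\mathbb{R}}^2\setminus B(x,r)$ exists. The key observation is that the straight line $L$ through $0$ and $x$ is a $d$-geodesic on $B(0,e^{-2})$ (because $\omega$ is radial), so $L\cap B(x,r)$ is an interval, forcing $V\cap L=\emptyset$ and hence $V$ into one open half-plane $W$ bounded by $L$. Now pick $z\in V$, let $S$ be the Euclidean circle of radius $|z|$, and let $L'$ be the line through $0$ and $z$. One checks that the arc $S\cap\overline{W}_2$ (the quarter of $S$ in $W$ on the far side of $L'$ from $x$) must meet $B(x,r)$ at some point $y$, since its endpoint on $L$ lies outside $V$. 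Any length-minimizer from $x$ to $y$ may be taken in the closed half-plane $\overline{W}$ and must cross $L'$ at some $v$; since $v$ and $z$ lie on the same ray of $L'$ and $|y|=|z|$, the reverse triangle inequality for $d(0,\cdot)=e^{-1/|\cdot|}$ gives $d(v,z)\le d(v,y)$, whence $d(x,z)\le d(x,v)+d(v,z)\le d(x,v)+d(v,y)=d(x,y)<r$, contradicting $z\notin B(x,r)$. The missing idea in your plan is this reflection/monotonicity step across the line $L'$ through the putative bad point.
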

	\begin{proof}
		The claim is obvious when $x=0$, so we assume that $x \neq 0$. We argue by contradiction. Suppose that $B = B(x,r)$ is not simply connected. Since $(\widehat{\mathbb{R}}^2,d)$ is a geodesic space, all metric balls are connected. Hence the failure of simple connectivity implies that there exists a component $V$ of $\widehat{\mathbb{R}}^2 \setminus B$ not containing $\infty$. 
		
		Observe that $B(0,e^{-2})$ coincides with the Euclidean ball $B(0,1/2)$. In this region, $\omega$ is increasing as a function of the radius. Let $L$ be the Euclidean straight line which contains $x$ and the origin. The increasing property of $\omega$ implies that $L \cap B(0,e^{-2})$ is a geodesic segment. Thus $L \cap B(x,r)$ is connected, and in particular $V \cap L = \emptyset$. 
		
		It follows that $V$ is contained in one of the two open half-planes defined by the line $L$, denoted by $W$. Let $z \in V$ and let $S$ denote the Euclidean circle of radius $\abs{z}$ centered at the origin. Let $L'$ denote the Euclidean straight line containing $0$ and $z$. Then $W \setminus L'$ consists of two disjoint open sets $W_1, W_2$, where $x \in \partial W_1$. We observe that there exists a point $y \in S \cap B \cap \overline{W}_2$. A length-minimizing curve from $x$ to $y$ must cross $L'$ at some point $v$. However, the radial symmetry of $\omega$ implies that $d(v,z) \leq d(v,y)$, and thus that $d(x,z) \leq d(x,y)$. This gives a contradiction, and we conclude that $B$ is simply connected. 
	\end{proof}
	
	
	\begin{proof}[Proof of \Cref{prop:llc_loewner}]
	That $(\widehat{\mathbb{R}}^2, d)$ is linearly locally connected can be shown from \Cref{lemm:simply_conn} as follows. By Lemma 2.5 in \cite{BK:02}, it suffices to show that there exists $r_0>0$ and $\lambda \geq 1$ such that every ball $B(x,r)$ of radius $r \in (0,r_0)$ is contractible inside the ball $B(x,\lambda r)$. 
	
	Let $s =e^{-2}/4$ and let $L\geq 1$ be such that $(\widehat{\mathbb{R}}^2\setminus B(0,s), d)$ is $L$-bi-Lipschitz equivalent to a Euclidean disk. Let $r_0 = e^{-2}/(4L^2)$ and $\lambda = L^2$. For any $r \in (0,r_0)$ and $x \in \widehat{\mathbb{R}}^2$, the ball $B(x,\lambda r)$ is contained in $B(0,e^{-2})$ or it is contained in $(\widehat{\mathbb{R}}^2\setminus B(0,s), d)$. In the first case, $B(x,r)$ is simply connected by \Cref{lemm:simply_conn} and hence contractible. In the second case, the $L$-bi-Lispchitz equivalence of $(\widehat{\mathbb{R}}^2\setminus B(0,s), d)$ with a Euclidean disk implies that $B(x,r)$ is contractible inside $B(x,\lambda r)$. We conclude that $(\widehat{\mathbb{R}}^2, d)$ is linearly locally connected.  
		
	We now show that $(\widehat{\mathbb{R}}^2,d)$ is not Loewner. Let $E = (-\infty,0) \times\{0\}$ and let $F_t = [r_t,R_t] \times \{0\}$ for $t\in(0,1)$, where $r_t = -1/\log(t/2)$ and $R_t = -1/\log t$. Then $\dist(E,F_t) = \diam(F_t) = t$, so that $\Delta(E,F_t) = 1$ for all $t$. Observe that $\lim_{t \rightarrow 0} R_t/r_t = 1$. 
	
	Since the identity map $\iota \colon (\mathbb{R}^2,\abs{\cdot}) \to (\mathbb{R}^2,d)$ is 1-QC, the modulus of $\Gamma(E,F_t)$ relative to the metric $d$ is the same as the modulus of the same curve family relative to the Euclidean metric. These curve families arise classically in the Teichm\"uller ring problem \cite[Chapter III]{Ahl:66}. One can give an upper bound on their modulus as follows. Let $\Gamma_t$ denote the family of curves which span the open Euclidean annulus $A_t = A((r_t,0);R_t-r_t,r_t)$, where $t$ is sufficiently small so that $R_t < 2r_t$. For sufficiently small $t$, the annulus $A_t$ does not intersect $E$. The family $\Gamma_t$ majorizes $\Gamma(E,F_t)$ and has modulus $2\pi/\log(r_t/(R_t - r_t))$. 
	
	As $t \rightarrow 0$, we have that $\Mod \Gamma(E,F_t)$ goes to zero. Hence $(\mathbb{R}^2, d)$ is not Loewner.
	\end{proof}

The Loewner condition and linear local connectedness are conceptually similar in that they both rule out the existence of cusps and sequences of bottlenecks that become arbitrarily thin. In fact, the two properties are equivalent for the class of Ahlfors 2-regular metric spheres. This follows from Theorem 1.1 and Theorem 1.2 in \cite{BK:02} together with the quasisymmetric invariance of the Loewner condition \cite[Cor. 1.6]{Tys:98}. This example illustrates how, for metric spheres of finite Hausdorff 2-measure, linear local connectedness does not imply the Loewner condition without the assumption of Ahlfors regularity.

\subsection{An accumulation of spikes, I} \label{sec:spikes_1}
The purpose of this example is to give a metric surface $X$ so that the Hausdorff 2-measure on $X$ is upper 2-regular but $X$ fails to be I-LLC. Upper regularity implies, by \Cref{prop:reciprocality_criterion}, that there is a QC parametrization of $X$ by the Euclidean plane. However, $X$ does not admit an MQC parametrization by the Euclidean plane, as shown by the following simple lemma. 

\begin{lemma} \label{prop:mqc_llc}
Suppose there is an MQC map $g\colon \Omega \rightarrow X$, where $\Omega$ is a domain in $\mathbb{R}^2$. Then $X$ is I-LLC. 
\end{lemma}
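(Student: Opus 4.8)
\textbf{Proof proposal for Lemma~\ref{prop:mqc_llc}.}
The plan is to transfer the I-LLC property from the planar domain $\Omega$ to $X$ via the MQC map $g$, using the infinitesimal distortion control of $g$ to pass between $g$-images of small Euclidean balls and metric balls in $X$. First I would recall that every planar domain is I-LLC (indeed, Euclidean balls are convex and their complements in the plane are connected in the appropriate scale-invariant way), so $\Omega$ carries the I-LLC property with some constant $\lambda_0$ and radii $r_y^0$. The task is to produce, for each $x \in X$, a radius $\widehat r_x>0$ and a uniform constant $\Lambda$ such that the two I-LLC conditions hold for all $r<\widehat r_x$.

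The key step is a ``distortion of balls'' estimate. Fix $p \in \Omega$ and write $x=g(p) \in X$. For $s>0$ let
\[
L(p,s)=\sup\{d_X(g(p),g(q)): |p-q|\le s\},\qquad
\ell(p,s)=\inf\{d_X(g(p),g(q)): |p-q|\ge s\}.
\]
The MQC condition gives $\limsup_{s\to 0} L(p,s)/\ell(p,s)\le H$, so for all sufficiently small $s$ we have $L(p,s)\le 2H\,\ell(p,s)$, and consequently the chain of inclusions
\[
g\big(B(p,s)\big)\subset B_X\big(x,L(p,s)\big),\qquad B_X\big(x,\ell(p,s)\big)\subset g\big(B(p,s)\big).
\]
Combining these, $B_X(x,\ell(p,s))\subset g(B(p,s))\subset B_X(x,2H\ell(p,s))$ for small $s$, i.e.\ $g$-images of small Euclidean balls are sandwiched between comparable metric balls, and dually a small metric ball $B_X(x,r)$ satisfies $g^{-1}(B_X(x,r))\subset B(p,s)$ once $\ell(p,s)\ge r$, which one can arrange with $s$ comparable to the ``metric radius'' of $B_X(x,r)$ up to the factor $2H$. (Here one uses that $g$ is a homeomorphism to control the inverse images, and that $\ell(p,\cdot)$ and $L(p,\cdot)$ are monotone.)

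With this in hand the argument is a diagram chase. Given $x\in X$, $r$ small, and $y,z\in B_X(x,r)$, pull back by $g^{-1}$ into a Euclidean ball $B(p,s)$ with $s$ comparable to $r$ via the distortion bound; apply the I-LLC property of $\Omega$ at scale $s$ to get a continuum $K_0\subset B(p,\lambda_0 s)$ joining $g^{-1}(y),g^{-1}(z)$; then push forward, using the inclusion $g(B(p,\lambda_0 s))\subset B_X(x,2H\cdot(\text{comparable to }\lambda_0 s))$, to land inside $B_X(x,\Lambda r)$ for $\Lambda$ depending only on $H$ and $\lambda_0$. The continuum $g(K_0)$ does the job since continua are preserved by homeomorphisms. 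The second (outer) I-LLC condition for complements is handled symmetrically: complements of small metric balls pull back into complements of comparable Euclidean balls, one applies condition (2) in $\Omega$, and pushes the resulting continuum forward. Throughout one must shrink $\widehat r_x$ so that (a) the relevant Euclidean balls stay inside $\Omega$ and inside $B(p,r_p^0)$, and (b) the asymptotic inequality $L(p,s)\le 2H\ell(p,s)$ is actually valid, which is where the qualifier ``infinitesimally'' enters; this is exactly what makes the conclusion the \emph{infinitesimal} LLC property rather than the global one.

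The main obstacle is the bookkeeping of scales: the map $g$ only controls ratios $L/\ell$ in the limit, so one never gets clean bi-Lipschitz-type comparisons, only the two-sided ball sandwich above, and one has to be careful that iterating ``Euclidean ball $\to$ metric ball $\to$ Euclidean ball'' does not lose control of the constants. I expect the resolution to be routine once the sandwich inclusions are set up cleanly and one fixes, for each $x$, a single small scale below which all the asymptotic inequalities hold simultaneously for the finitely many applications of the distortion bound that the argument uses.
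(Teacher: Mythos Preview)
Your proposal is correct and follows essentially the same approach as the paper: set up the ball sandwich $B_X(x,\ell(p,s))\subset g(B(p,s))\subset B_X(x,2H\ell(p,s))$ from the MQC bound, connect the preimages inside a Euclidean ball, and push the continuum forward. The paper's proof is only cosmetically leaner: rather than invoking the I-LLC of $\Omega$ with an abstract constant $\lambda_0$, it takes $R'=\sup\{|x'-w'|:w'\in g^{-1}(B(x,r))\}$ (resp.\ the analogous infimum for the complement case), uses convexity of $B(x',R')$ directly (so effectively $\lambda_0=1$), and reads off $g(B(x',R'))\subset B(x,2Hr)$ from $\ell(x',R')\le r$ --- which eliminates the scale bookkeeping you flagged as a potential obstacle.
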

\begin{proof}
Let $x \in X$ and $x' = g^{-1}(x)$. Let $R_x>0$ be sufficiently small so that $H_g(x',R) \leq 2H$ for all $R<R_x$. 

For small $r>0$, $g^{-1}(B(x,r)) \subset B(x',R_x)$. Let $y,z \in B(x,r)$, $y' = g^{-1}(y)$, $z' = g^{-1}(z)$, and $R' = \sup\{\abs{x'-w'}: w' \in g^{-1}(B(x,r))\}$. Then there is a curve $\mathcal{C}$ from $y'$ to $z'$ which is contained in $B(x',R')$. The metric quasiconformality implies that $g(\mathcal{C})$ is a curve from $y$ to $z$ contained in $B(x,2Hr)$. 

Similarly, let $y,z \in X \setminus B(x,r)$, with $y' = g^{-1}(y)$ and $z' = g^{-1}(z)$. Now, let $R' = \inf\{\abs{x'-w'}: w' \in \Omega \setminus g^{-1}(B(x,r))\}$. Connect $y'$ to $z'$ by a curve $\mathcal{C}$ in $\Omega \setminus B(x',R')$. Then metric quasiconformality implies that $g(\mathcal{C})$ is a curve from $y$ to $z$ contained in $X \setminus B(x,r/(2H))$. This establishes that $X$ is I-LLC. \end{proof}

We construct this example as a surface in $\mathbb{R}^3$ containing a sequence of spikes that become progressively smaller and converge to a point. For all $n \in \mathbb{N}$, let $t_n = 2^{-n}$, $h_n = 2^{-n/2}$, and $r_n = 2^{-2}\cdot 2^{-3n/2}$. The surface $X$ is constructed by removing each Euclidean disk $B((t_n,0),r_n)$ from $\mathbb{R}^2$, identified here with $\mathbb{R}^2 \times \{0\}$, and replacing it with a cone $S_n$ of height $h_n$. That is, $S_n$ has vertex $(t_n,0,h_n)$ and joins to $\mathbb{R}^2$ along the circle $S((t_n,0),r_n)$. We equip $X$ with the ambient Euclidean metric from $\mathbb{R}^3$, though the example works just as well if we were to take the induced length metric.

We check that $X$ is upper 2-regular. Let $x \in X$ and $r>0$. In the first case, assume that $r \leq \abs{x}/20$, where $\abs{\cdot}$ is the Euclidean norm in $\mathbb{R}^3$. A computation shows that $B(x,r)$ intersects at most one of the cones $S_n$. It is clear that $\mathcal{H}^2(B(x,r) \cap (\mathbb{R}^2 \times \{0\})) \leq \pi r^2$. By the elementary geometry of cones in $\mathbb{R}^3$, it also holds that $\mathcal{H}^2(B(x,r) \cap S_n) \leq \pi r^2$. We conclude that $\mathcal{H}^2(B(x,r) \leq 2\pi r^2$. 

In the second case, assume that $r> \abs{x}/20$. Then $B(x,r) \subset B(0,21r)$, writing $0$ to denote the origin in $\mathbb{R}^3$. For this, we compute 
\begin{align*} 
\mathcal{H}^2(B(0,2^{-n})) & \leq \pi 2^{-2n} + \sum_{k=n}^\infty \mathcal{H}^2(S_n) \\ 
& \leq \pi2^{-2n} + \pi \sum_{k=n}^\infty 2^{-3n/2}\sqrt{2^{-n} + 2^{-3n}} \\
& \leq \pi2^{-2n} + \pi \sum_{k=n}^\infty 2^{-3n/2}(2^{-n/2} + 2^{-3n/2}) \lesssim 2^{-2n}.
\end{align*} 
We deduce that $\mathcal{H}^2(B(0,21r)) \lesssim r^2$, and therefore that $X$ is upper 2-regular. 

Finally, the point $y_n= (t_n,0,h_n)$ lies outside the ball $B_n = B(0,\abs{y_n}/2)$. Any continuum connecting $y_n$ to the unbounded component of $\mathbb{R}^2 \setminus B_n$ must pass through the smaller ball $X \setminus B(0,2t_n)$. However, $\lim_{n \to \infty} t_n/\abs{y_n} = 0$, violating the I-LLC property. 

\subsection{An accumulation of spikes, II} \label{sec:spikes_2}
By modifying the previous example, we construct a space which is I-QS equivalent to the plane but not QS equivalent.   

We carry out the same construction as above, now taking $t_n = 2^{-n}$, $h_n = 2^{-n}$, and $r_n = 2^{-2} \cdot 2^{-2n}$. Instead of cones, we replace the Euclidean disks $B((t_n,0),r_n)$ 
with cylinders $C_n$ of height $h_n$. More precisely, $C_n= E_n \cup F_n$, where $E_n=\{(x_1,x_2,x_3): \, (x_1,x_2) \in S((t_n,0),r_n), \, 0\leq x_3 \leq h_n\}$ and $F_n=B((t_n,0),r_n)+(0,0,h_n)$. Again, we equip the resulting space $X$ with the restriction of the ambient Euclidean metric to $X$ to get $(X,d)$. 

The space $X$ is not LLC because the cylinders get progressively narrower; thus $X$ is not QS equivalent to the Euclidean plane. However, 
we claim that $X$ equipped with $\mu=\h^2$ satisfies the conditions of \Cref{thm:iqs} and therefore admits an I-QS map from $\R^2$. 

First, notice that for every $x \in X \setminus \{(0,0,0)\}$ there is $r_x>0$ so that $B(x,r_x) \subset X$ is 10-bi-Lipschitz equivalent to a planar disk. In particular, the conditions 
of \Cref{thm:iqs} hold for all such points $x$. 

We still need to verify the conditions of \Cref{thm:iqs} for $x=\mathbf{0}=(0,0,0)$. Take $r_{\mathbf{0}}=1/2$. The I-LLC condition follows from our choices of $t_n$, $h_n$ and $r_n$. Also, calculating as in \Cref{sec:spikes_1}, 
we conclude that $r^2 \lesssim \mathcal{H}^2(B(\mathbf{0},r)) \lesssim r^2$ for all $r>0$. Therefore, the $q$-metric on $X$ is comparable to the metric $d$, and $\mu$ is I-MM. 

Finally, we show that the I-Loewner condition is satisfied at $\mathbf{0}$. For a fixed $T>0$, let $s,t>0$ satisfy $s/t \leq T$. Let $n \in \mathbb{N}$ be such that $2^{-n-1} \leq s < 2^{-n}$. Consider two disjoint continua $E,F \subset X$ as in \Cref{def:iloew}. We make the observation that the cylinders $C_n$ and $C_{n-1}$ are separated by a distance of at least $2^{-n-1}$. Thus $F \cap(\mathbb{R}^2 \times \{0\}) \cap B(\mathbf{0},2^{-n+1})$ contains a continuum $F'$ of diameter at least $2^{-n-1}$. Next, we split into two cases. If $t \geq s$, then by similar reasoning $E \cap(\mathbb{R}^2 \times \{0\}) \cap B(\mathbf{0},2^{-n})$ contains a continuum $E'$ of diameter at least $2^{-n-2}$. If $t \leq s$, then take $E'$ to be a continuum in $E \cap(\mathbb{R}^2 \times \{0\}) \cap B(\mathbf{0},t)$ of diameter at least $t/16$. 

Then $d(E',F') \leq 2^{-n+2}$, and $E'$ and $F'$ have relative distance
\[\Delta(E',F') \leq \frac{2^{-n+2}}{\min\{2^{-n-2},t/16\}} \leq \max\left\{128T, 16\right\}. \]
Let $T' = \max\{128T,16\}$, so that $\Delta(E',F') \leq T'$. 
Consider the domain \[G = \left(\mathbb{R}^2 \setminus \bigcup_{n=1}^\infty \overline{B}((t_n,0),r_n)\right) \times \{0\} \subset X.\]
The domain $G$ is Loewner; let $\widetilde{\varphi}$ be the associated Loewner function. We have then the inequality
\[\Mod \Gamma(E,F) \geq \Mod \Gamma(E',F'; G) \geq \widetilde{\varphi}(T'). \]
We conclude that the I-Loewner condition is satisfied at $\bold{0}$. 


\subsection{Gluing a Grushin half-plane to a Euclidean half-plane}

The \emph{Grushin plane} is a basic example of a sub-Riemannian manifold. See \cite[Sec. 3.1]{Bel:96} for an overview. One approach to the Grushin plane, studied in \cite{Rom:16}, is given by the following definition. For each $\beta \in (0,1)$, the \emph{$\beta$-Grushin plane} is $\mathbb{R}^2$ equipped with the metric $\widetilde{d}$ obtained from the singular conformal weight $\widetilde{\omega}\colon \mathbb{R}^2 \to [0, \infty]$ defined by $\widetilde{\omega}(x) = \abs{x_1}^{-\beta}$. The standard Grushin plane is obtained by taking $\beta = 1/2$. 
Note that the standard Grushin plane does not have locally finite Hausdorff 2-measure. However, in the case when $\beta \in (0,1/2)$, it was shown in \cite{RV:17} and \cite{Wu:15} that the $\beta$-Grushin plane is bi-Lipschitz equivalent to the Euclidean plane. In particular, the $\beta$-Grushin plane is Ahlfors 2-regular. Moreover, the identity map $\mathbb{R}^2 \to (\mathbb{R}^2, \widetilde{d})$ is QS. A proof of this can be found in \cite[Thm. 4.3]{Rom:16}.

Here, we present a modified version of the Grushin plane. Let $\beta \in (0,1/2)$. Define the conformal weight $\omega\colon \mathbb{R}^2 \to [0,\infty]$ by
\[\omega(x) = \left\{ \begin{array}{ll} \abs{x_1}^{-\beta} & \text{ if } x_1 > 0 \\ 1 & \text{ if } x_1 \leq 0 \end{array} \right. .\]
Let $d$ denote the resulting metric. 

First, we establish a ball-box relationship. For all $r \leq 1$, let \[D_r = [-r,(1-\beta)r^{1/(1-\beta)}] \times [-r,r].\] Note that, for all $x_2 \in \mathbb{R}$, the straight-line curve from $(0,x_2)$ to $((1-\beta)r^{1/(1-\beta)},x_2)$ has length $r$. Observe further that $\omega \geq 1$ on $D_r$. From this, it follows that $d(x,0) \geq r$ for all $x \in \partial D_r$. Next, by considering the concatenation of the vertical line segment from 0 to $(0, x_2)$ with the horizontal line segment from $(0,x_2)$ to $x$, we see that $d(x,0) \leq 2r$ for all $x \in \partial D_r$. We conclude that 
\begin{equation}\label{eq:ball_box}
    B_d((0,0),r) \subset D_r \subset B_d((0,0),2r)
\end{equation}
for all $r \leq 1$. 

Next, observe that $\mathcal{H}^2(B_d(0,2r))$ is bounded from below by \begin{equation} \label{equ:grushin_area_of_ball} 
\int_{D_r} \omega^2 \,d\mathcal{L}^2 = 2r^2+ 2r\int_{0}^R t^{-2\beta}\,dt = 2r^2 + \frac{r^{(2-3\beta)/(1-\beta)}}{1-2\beta}.
\end{equation} 
For $\beta \in (0,1/2)$, the inequality $(2-3\beta)/(1-\beta)<2$ holds, from which we conclude that 
\[\liminf_{r \to 0} \frac{\mathcal{H}^2(B_d(x,r))}{r^2} = \infty\] 
for all $x$ lying on the vertical axis. On the other hand, \eqref{equ:grushin_area_of_ball} is an upper bound on $\mathcal{H}^2(B_d(x,r))$, showing that $(\mathbb{R}^2,d)$ has locally finite Hausdorff 2-measure.   

Since $\omega$ is constant on each vertical line, we see that metric balls are simply connected. In particular, $(\mathbb{R}^2,d)$ is LLC.

This example illustrates how a metric surface with locally finite 2-measure can violate infinitesimal upper 2-regularity at every point in a fairly large set, namely a nondegenerate continuum. 
Since any metric surface that is infinitesimally upper 2-regular is reciprocal, this suggests the following question.

\begin{question}
Is there a metric surface for which reciprocality condition \eqref{equ:reciprocality(3)} fails at every point on a nondegenerate continuum?
\end{question}

The space $(\mathbb{R}^2,d)$ in this example is reciprocal and hence does not answer this question. In fact, the identity map onto the Euclidean plane is 1-QC. This can be shown by a change of variables argument; see also Proposition 3.5 in \cite{GJR:17}, where the corresponding fact is proved for the $\beta$-Grushin plane. In contrast, we have the following.

\begin{proposition}
There is no MQC map from the Euclidean plane to $(\mathbb{R}^2,d)$.
\end{proposition}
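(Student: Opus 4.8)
The plan is to argue by contradiction: suppose $g\colon(\R^2,|\cdot|)\to(\R^2,d)$ is MQC with constant $H$. The only place where $(\R^2,d)$ fails to be locally bi-Lipschitz to the Euclidean plane is the vertical axis $V=\{0\}\times\R$, and by the ball--box estimate \eqref{eq:ball_box} the $d$-balls centred at a point $z\in V$ are, in Euclidean coordinates, extremely eccentric: roughly a Euclidean half-disk of radius $s$ to the left of $V$ glued to a sliver of Euclidean width $\sim s^{1/(1-\beta)}\ll s$ to the right. An MQC map must send small Euclidean balls to ``almost round'' $d$-balls, so it would have to send a small Euclidean ball around $q:=g^{-1}(z)$ to such an eccentric set; I will contradict this by showing that $g$ is in fact honestly quasiconformal, and hence must \emph{also} send small Euclidean balls to almost round \emph{Euclidean} sets.

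First, I upgrade MQC to quasisymmetry. Because the domain $\R^2$ is Ahlfors $2$-regular and Loewner while the target $(\R^2,d)$ has locally finite Hausdorff $2$-measure, the metric quasiconformality of $g$ yields the ``easy half'' of the metric--versus--geometric equivalence, whose proof uses only the good geometry of the \emph{domain} (cf.\ \cite{HK:98}): $g$ lies in the Newtonian--Sobolev class $N^{1,2}_{\loc}$ and satisfies $\Mod\Gamma\le C\,\Mod g\Gamma$ for every curve family $\Gamma$ in $\R^2$, with $C=C(H)$. Now $(\R^2,d)$ is reciprocal and the identity $\iota^{-1}\colon(\R^2,d)\to\R^2$ is $1$-QC (both established above); writing $\psi:=\iota^{-1}\circ g$, which equals $g$ as a set map of $\R^2$, one gets $\Mod g\Gamma=\Mod\psi\Gamma$ and hence $\Mod\Gamma\le C\,\Mod\psi\Gamma$ for all $\Gamma$. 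By the equivalence of definitions of quasiconformality in $\R^2$ (\cite[Thm.~34.1]{Vai:71}) this forces $\psi^{-1}$, and therefore $\psi$, to be $C$-quasiconformal, hence $\eta$-quasisymmetric with $\eta=\eta(C)$.

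Next, the contradiction. Fix $z=(0,0)\in V$, $q=g^{-1}(z)$, and set $\rho_r:=\inf\{d(g(q),g(x)):|x-q|\ge r\}$. Unwinding the MQC condition, for all small $r>0$ one has $B_d(z,\rho_r)\subset g(B(q,r))\subset\overline B_d(z,2H\rho_r)$, and $\rho_r\to0$ as $r\to0$. On the other hand $g=\psi$ is $\eta$-quasisymmetric, so $g(B(q,r))$ has bounded Euclidean eccentricity at $z$: with $s_r$ its Euclidean inradius one has $B(z,s_r)\subset g(B(q,r))\subset\overline B(z,C's_r)$ and $s_r\to0$. Since $(0,\rho_r/2)\in B_d(z,\rho_r)$ has Euclidean distance $\rho_r/2$ from $z$, the outer Euclidean inclusion gives $\rho_r\le2C's_r$; since $(s_r/2,0)\in B(z,s_r)$ and the weight does not depend on $x_2$, so that crossing from $x_1=0$ to $x_1=s_r/2$ costs exactly $\frac{(s_r/2)^{1-\beta}}{1-\beta}$ in the metric $d$, the outer $d$-inclusion gives $\frac{(s_r/2)^{1-\beta}}{1-\beta}\le2H\rho_r$. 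Combining, $s_r^{1-\beta}\lesssim\rho_r\lesssim s_r$, i.e.\ $s_r^{-\beta}\lesssim1$; as $\beta>0$ this bounds $s_r$ away from $0$, contradicting $s_r\to0$.

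The only substantial point is the first step: that a metrically quasiconformal homeomorphism \emph{out of} $\R^2$ satisfies the one-sided modulus inequality $\Mod\Gamma\le C\,\Mod g\Gamma$ even when the target is known only to have locally finite Hausdorff $2$-measure. An alternative route is to observe that $g$ restricts to a genuine Euclidean QC map on each of the two components of $\R^2\setminus g^{-1}(V)$ — by \cite[Thm.~34.1]{Vai:71} on the side lying over $\{x_1\le0\}$, and on the other side using that a $\beta$-Grushin half-plane with $\beta<1/2$ is QC-equivalent to a Euclidean half-plane — and that these patch, since the straight line $V$ in the target is removable for quasiconformality; but justifying this patching across the possibly irregular curve $g^{-1}(V)$ is the same difficulty in another guise. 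Everything else is the definition of MQC unwound into the two-sided ball inclusion together with the elementary ball--box computation \eqref{eq:ball_box}.
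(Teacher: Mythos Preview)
Your overall strategy coincides with the paper's: reduce to showing that $\psi=\iota^{-1}\circ g$ is a quasisymmetric self-map of Euclidean $\R^2$, and then contradict this against the ball--box eccentricity \eqref{eq:ball_box} at a point of the axis. Your second step is just a more explicit version of the paper's one-line conclusion.

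Where you differ is in the justification of the first step. The paper takes exactly your ``alternative route'': since $\iota$ is locally quasisymmetric off the axis $Z$, the map $\psi$ is MQC (hence Euclidean QC) off $g^{-1}(Z)$, and one then invokes removability. Your worry about the possibly irregular curve $g^{-1}(Z)$ is legitimate --- V\"ais\"al\"a's Theorem~35.1 needs $\sigma$-finite $\mathcal H^1$ on the exceptional set, which is not a priori clear --- and the paper deals with it by also citing \cite[Prop.~2.5]{GJR:17}, where this removability is carried out in the Grushin setting.

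Your main route via the one-sided modulus inequality is a genuine alternative that sidesteps removability, and it does work here; but the hypothesis ``target has locally finite $\mathcal H^2$'' is not quite enough on its own. The Heinonen--Koskela mechanism you invoke (Sobolev regularity of $g$ and the pointwise inequality $L_g^2\lesssim J_g$) needs the target to be \emph{locally lower} $2$-regular, so that $l_g(x,r)^2\lesssim \mathcal H^2_d\big(g(B(x,r))\big)$. That does hold for $(\R^2,d)$ --- by \eqref{equ:grushin_area_of_ball} on the axis and by local bi-Lipschitz equivalence to Euclidean off it --- so once this is noted, your modulus argument combined with the $1$-QC identity $\iota$ legitimately yields that $\psi$ is Euclidean QC. Either route then feeds into the same ball--box contradiction.
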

\begin{proof}
Assume there is a MQC map $f\colon \mathbb{R}^2 \to (\mathbb{R}^2,d)$. Observe that the identity map $\iota\colon \mathbb{R}^2 \to (\mathbb{R}^2,d)$ is locally quasisymmetric outside of the vertical axis $Z$. This implies that $F = \iota^{-1} \circ f \colon \mathbb{R}^2 \to \mathbb{R}^2 $ is MQC outside of the set $f^{-1}(Z)$. By a classical removability theorem for planar quasiconformal mappings \cite[Thm. 35.1]{Vai:71}, it follows that $F$ is globally QS; see also Proposition 2.5 of \cite{GJR:17}. 

Let $x \in f^{-1}(Z)$. By quasisymmetry, there exists $H \geq 1$ such that \[B_{\text{Euc}}(F(x),s(r)) \subset F(B_{\text{Euc}}(x,r)) \subset B_{\text{Euc}}(F(x),Hs(r))\] for all $r>0$, where $s(r) = \inf\{|F(x) - F(y)|: y \in \mathbb{R}^2 \setminus B_{\text{Euc}}(x,r)\}$. Comparing this with the ball-box relationship \eqref{eq:ball_box}, we conclude that $f$ is not MQC. This is a contradiction.
\end{proof}
A similar argument shows that there is no MQC map from $(\mathbb{R}^2,d)$ to $\mathbb{R}^2$. 

\subsection*{Acknowledgment} We thank the referee for a careful reading of the paper and detailed feedback, which helped us improve the text significantly. 

\bibliographystyle{abbrv}
\bibliography{imm_final} 

\vspace{1em}
\noindent
Department of Mathematics and Statistics, University of Jyvaskyla, P.O.
Box 35 (MaD), FI-40014, University of Jyvaskyla, Finland.\\

\emph{E-mail:} \settowidth{\hangindent}{\emph{aaaaaaaaa}} Kai Rajala: \textbf{kai.i.rajala@jyu.fi} \\ Martti Rasimus: \textbf{martti.i.rasimus@jyu.fi} \\ Matthew Romney: \textbf{matthew.d.romney@jyu.fi}

\end{document}